\theoremstyle{plain}
\newtheorem*{claim}{Claim}
\newtheorem*{CaseI}{Case I}
\newtheorem*{CaseII}{Case II}
\newtheorem*{reductionI}{Reduction 1}
\newtheorem*{reductionII}{Reduction 2}
\newtheorem*{reductionIII}{Reduction 3}
\newtheorem{theorem}{Theorem}[section]
\newtheorem*{thm}{Theorem}
\newtheorem{proposition}[theorem]{Proposition}
\newtheorem{lemma}[theorem]{Lemma}
\newtheorem{corollary}[theorem]{Corollary}
\theoremstyle{definition}
\newtheorem{remark}[theorem]{Remark}
\newtheorem*{remarks}{Remarks}
\newtheorem*{mainresult}{Main Result}
\newtheorem{definition}[theorem]{Definition}
\newtheorem*{defn}{Definition}
\newtheorem{example}[theorem]{Example}
\theoremstyle{remark}
\newtheorem*{Open Question}{Open Question}
\begin{document}
\def\A{\text{A}}
\def\Aut{\text{Aut\,}}
\def\Ann{\text{Ann\,}}
\def\Bl{\text{\rm Bl}}
\def\codim{\text{\rm codim}}
\def\Gal{\text{Gal\,}}
\def\tphi{\tilde{\phi}}
\def\dim{\text{dim\,}}
\def\characteristic{\text{characteristic\,}}
\def\discrep{\text{discrep}}
\def\Def{\text{Def\,}}
\def\LDef{\overline{\text{Def}}\,}
\def\Tbar{\underline{T}}
\def\length{\text{\rm length}}
\def\B{\mathcal{B}}
\def\C{\mathcal{C}}
\def\D{\mathcal{D}}
\def\div{\text{\rm div}}
\def\e{\epsilon}
\def\ev{\text{\rm ev}}
\def\E{\mathcal{E}}
\def\Eq{\text{Eq\,}}
\def\Exc{\text{Exc\,}}
\def\Reg{\text{Reg\,}}
\def\Eff{\text{Eff\,}}
\def\Ext{\mathscr{E}\!xt}
\def\F{\mathscr{F}}
\def\Frac{\text{Frac\,}}
\def\G{\mathscr{G}}
\def\H{\mathscr{H}}
\def\Hom{\mathscr{H}\!om}
\def\hom{\text{\rm Hom}}
\def\I{\mathscr{I}}
\def\Ind{\text{Indet\,}}
\def\Image{\text{\rm Image}}
\def\Isom{\text{\rm Isom}}
\def\id{\text{\rm id}}
\def\im{\text{im\,}}
\def\J{\mathscr{J}}
\def\K{\mathscr{K}}
\def\Ker{\text{\rm Ker}}
\def\L{\mathscr{L}}
\def\Moduli{\text{\underline{Attaching Moduli}}}
\def\Maps{\text{\underline{Attaching Maps}}}
\newcommand{\Mod}[2][n]{\ensuremath{\overline{\mathcal{M}}_{1,#1}(#2)}}
\newcommand{\m}{\ensuremath{m}}
\def\NE{\overline{\text{NE}}}
\def\Nef{\text{Nef}}
\def\O{\mathscr{O}}
\def\R{\mathcal{R}}
\def\T{\mathcal{T}}
\def\ttau{\tilde{\tau}}
\def\P{\mathbb{P}}
\def\pn{\{p_i\}_{i=1}^{n}}
\def\Q{\mathbb{Q}}
\def\W{\mathcal{W}}
\def\X{\mathcal{X}}
\def\Y{\mathcal{Y}}
\def\U{\mathcal{U}}
\def\V{\mathcal{V}}
\def\Z{\mathcal{Z}}
\def\pic{\text{\rm Pic}\,}
\def\Sch{\text{Sch}}
\def\SA{(\mathcal{S}, \mathcal{A})}
\def\red{_{\text{\rm red}}}
\def\Res{\text{\rm Res}}
\def\Spec{\text{\rm Spec\,}}
\def\Proj{\text{\rm Proj\,}}
\def\Supp{\text{\rm Supp\,}}
\def\Mor{\text{\rm Mor}}
\def\Pic{\text{\rm Pic\,}}
\def\Ver{\text{\rm Ver}}
\def\ql{\{q_i\}_{i=1}^{l}}
\def\qm{\{q_i\}_{i=1}^{m}}
\def\qmp{\{q_i'\}_{i=1}^{m}}
\def\pn{\{p_i\}_{i=1}^{n}}
\def\sigman{\{\sigma_i\}_{i=1}^{n}}
\def\taum{\{\tau_i\}_{i=1}^{m}}
\def\taul{\{\tau_i\}_{i=1}^{l}}
\newcommand{\sigmav}[1]{\{\sigma_i\}_{i=1}^{#1}}
\newcommand{\pv}[1]{\{p_i\}_{i=1}^{#1}}
\newcommand{\tauv}[1]{\{\tau_i\}_{i=1}^{#1}}
\newcommand{\qv}[1]{\{q_i\}_{i=1}^{#1}}
\def\S{\overline{\mathcal{M}}}
\def\M{\overline{M}}
\newcommand{\SV}[2]{\overline{\mathcal{M}}_{1,#1}(#2)}
\newcommand{\NS}[2]{\overline{\mathcal{M}}_{1,#1}(#2)^*}
\newcommand{\MV}[2]{\widetilde{\mathcal{M}}_{1,#1}(#2)}
\newcommand{\NM}[2]{\overline{M}_{1,#1}(#2)^*}

\title[Modular compactifications]{Modular compactifications of\\ the space of pointed elliptic curves II}
\author{David Ishii Smyth}
\address{Department of Mathematics\\ Harvard University\\ Cambridge, MA 02138}
\email{dsmyth@math.harvard.edu}
\subjclass[2000]{Primary: 14H10, Secondary: 14H20, 14E30}
\keywords{moduli of curves, elliptic curves, minimal model program}

\maketitle
\begin{abstract} We prove that the moduli spaces of $n$-pointed $m$-stable curves introduced in our previous paper have projective coarse moduli. We use the resulting spaces to run an analogue of Hassett's log minimal model program for $\M_{1,n}$.
\end{abstract}

\tableofcontents
\pagebreak

\section{Introduction}
In \cite{Hgenus2}, Hassett proposed the problem of studying log canonical models of $\M_{g}$. For any $\alpha \in \Q \cap [0,1]$ such that $K_{\S_{g}}+\alpha\Delta$ is big, Hassett and Keel define
\[
\M_{g}(\alpha):=\Proj \oplus_{m \geq0} H^0(\S_{g}, m(K_{\S_{g}}+\alpha\Delta)),
\]
where the sum ranges over sufficiently divisible $m$, and ask whether the spaces $\M_{g}(\alpha)$ admit a modular interpretation. In this paper, we consider an analogous problem for $\M_{1,n}$. For any $s \in \Q$, we define
\begin{align*}
&D(s):=s\lambda+\psi-\Delta, \\
&R(s):=\oplus_{m \geq 0}H^0(\S_{1,n}, mD(s)),\\
&\M_{1,n}^s:=\Proj R(s),
\end{align*}
where $\lambda$, $\psi$, and $\Delta$ are certain tautological divisor classes on $\S_{1,n}$ (these will be defined in Section 3), and the sum defining $R(s)$ is taken over $m$ sufficiently divisible. We will show that the section ring $R(s)$ is finitely-generated and that the associated birational model $\M_{1,n}^s$ admits a modular interpretation for all $s \in \Q$ such that $D(s)$ is big. In fact, the birational models arising in this construction are precisely the moduli spaces of $m$-stable curves introduced in \cite{SmythEI}.

In \cite[Theorem 3.8]{SmythEI}, we proved that the moduli stack of $n$-pointed $m$-stable curves is an irreducible, proper, Deligne-Mumford stack over $\Spec \mathbb{Z}[1/6]$. In this paper, we work over a fixed algebraically closed field $k$ of characteristic zero. Henceforth, $\S_{1,n}(m)$ will denote the moduli stack of $m$-stable curves over $k$,  $\M_{1,n}(m)$ the corresponding coarse moduli space, and $\M_{1,n}(m)^*$  the normalization of the coarse moduli space. Our main result (Corollary \ref{C:MainResult}) is:

\begin{mainresult} Given $s \in \Q$ and $m, n \in \mathbb{N}$ satisfying $m<n$, we have
\begin{enumerate}
\item[]
\item $D(s)$ is big iff $s \in (12-n, \infty)$\\
\item $\M_{1,n}^s=
\begin{cases}
\M_{1,n} &\text{ iff } s \in (11, \infty)\\
\M_{1,n}(1) & \text{ iff } s \in (10,11]\\
\M_{1,n}(m)^* &\text{ iff } s \in (11-m,12-m)\text{ and $m \in \{2, \ldots, n-2\}$}\\
\M_{1,n}(n-1)^* & \text{ iff } s \in (12-n,13-n]\\
\end{cases}
$
\end{enumerate}
\end{mainresult}
\begin{remarks}
\begin{enumerate}
\item[]
\item Note that we do not give a modular interpretation of the model $\M_{1,n}^s$ for the transitional values $s=10, 9, \ldots, 14-n$. At these values, the model $\M_{1,n}^s$ may be viewed as the intermediate small contraction associated to the flip $\M_{1,n}(m-1)^* \dashrightarrow \M_{1,n}(m)^*$.
\item We will show that $\S_{1,n}(m)$ is a smooth stack iff $m \leq 5$ (Corollary \ref{C:Smoothness} and Corollary \ref{C:Singular}). In particular, $\M_{1,n}(m)=\M_{1,n}(m)^*$ for $m \leq 5$. We do not know whether $\M_{1,n}(m)=\M_{1,n}(m)^*$ for $m \geq 6$.
\end{enumerate}
\end{remarks}
Our main result gives a complete Mori chamber decomposition of the two-dimensional slice of the effective cone of $\M_{1,n}$ spanned by $\lambda$ and $\psi-\Delta$ (Figure 1). Now let us explain how this result is connected to the log minimal model program for $\M_{g}$. Recall that the canonical divisor of $\S_{g}$ is given by
$K_{\S_{g}} = 13\lambda-2\Delta \in \Pic(\S_{g})$, so $K_{\S_{g}}+\alpha\Delta$ is numerically proportional to a uniquely defined divisor of the form $s\lambda-\Delta$, where $s$ is the \emph{slope} of the divisor. We may define
\begin{align*}
&D(s):=s\lambda-\Delta, \\
&R(s):=\oplus_{m \geq 0}H^0(\S_{g}, mD(s)),\\
&\M_{g}^s:=\Proj R(s).
\end{align*}
We have $\M_{g}(\alpha)=\M_{g}^s$ for $s=\frac{13}{2-\alpha}$ so describing the birational models $\M_{g}(\alpha)$ is equivalent to describing the models $\M_{g}^s$.
In this notation, results of Hassett and Hyeon \cite{HH1, HH2} give
\begin{thm}[Hassett-Hyeon]
\begin{align*}
\M_{g}(s)=\begin{cases}
&\M_{g} \text{ if }s \in (11, \infty)\\
&\M_{g}^s=\M_{g}^{ps}  \text{ if } s \in (10,11]\\
&\M_{g}^s=\M_{g}^{qs}   \text{ if } s \in (10-\epsilon, 10),
\end{cases}
\end{align*}
where $\M_{g}^{ps}$ is the moduli space of pseudostable curves (in which elliptic tails are replaced by cusps) and $\M_{g}^{qs}$ is the moduli space of quasistable curves (in which elliptic tails and bridges are replaced by cusps and tacnodes).
\end{thm}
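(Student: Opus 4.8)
The plan is to pin down $\M_g^s = \Proj R(s)$ range by range, combining (i) intersection-number computations on $\S_g$ that identify the curve classes on which $D(s)=s\lambda-\Delta$ becomes nonpositive, with (ii) GIT constructions of alternate compactifications that realize the log minimal model program for the pair $(\S_g,\Delta)$. The organizing principle is that $D(s)$ is numerically proportional to $K_{\S_g}+\alpha\Delta$ with $\alpha = 2-13/s$, so as $s$ decreases past $11$ and then past $10$ one expects to cross two walls in the Mori chamber structure, and the task is to name the birational model in each chamber and produce it explicitly as a projective variety.

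For $s\in(11,\infty)$ I would show $D(s)$ is ample on $\S_g$. The binding constraint is the one-parameter family $C$ of elliptic tails (a $j$-line sweeping out the divisor $\Delta_1$ of curves with an elliptic tail, the rest of the curve held fixed): one computes $D(s)\cdot C$ to be a positive multiple of $s-11$, while on the other extremal curve classes of $\NE(\S_g)$ in the $(\lambda,\Delta)$-plane the intersection is manifestly positive for $s>11$; since this cone is two-dimensional, hence polyhedral, Kleiman's criterion yields ampleness and $\Proj R(s)=\S_g$. At the wall $s=11$ the class $D(11)=11\lambda-\Delta$ satisfies $D(11)\cdot C=0$, and I would argue it is semiample: it is the pullback of an ample class along the morphism $T\colon \S_g\to \M_g^{ps}$ to Schubert's space of pseudostable curves, which contracts $\Delta_1$ by replacing each elliptic tail with a cusp. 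The projectivity of $\M_g^{ps}$ and the identification $T^*(\text{ample})\sim 11\lambda-\Delta$ come from its GIT construction as a quotient of a Hilbert (or Chow) scheme of pluricanonically embedded curves, whose tautological polarization one computes in terms of $\lambda$ and $\Delta$.

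For $s\in(10,11)$ the class $D(s)$ is no longer nef on $\S_g$, so I would pass to $\M_g^{ps}$: because $T$ contracts a divisor (codimension one), pushforward of sections gives $R(s)=\bigoplus_m H^0(\M_g^{ps}, m\,T_*D(s))$, while a discrepancy computation for $(\S_g,\Delta)$ identifies $T$ as the $(K+\alpha\Delta)$-divisorial contraction, with $T_*D(s)$ ample on $\M_g^{ps}$ throughout this range (again read off the GIT polarization); hence $\M_g^s=\M_g^{ps}$ for all $s\in(10,11]$. Finally, for $s\in(10-\epsilon,10)$ there is a second wall at $s=10$: on $\M_g^{ps}$ the class $T_*D(10)$ has degree zero on the family of elliptic bridges, whose locus has codimension $\geq 2$, so crossing the wall is a flip, not a divisorial contraction. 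I would produce the flip as $\M_g^{qs}$, the moduli space of quasistable curves (elliptic bridges replaced by tacnodes), via the GIT stability analysis of bicanonically embedded curves of Hyeon--Lee, check that its tautological polarization equals $s\lambda-\Delta$ for $s$ slightly below $10$, and verify that $\S_g\dashrightarrow\M_g^{ps}\dashrightarrow\M_g^{qs}$ is a sequence of log MMP steps for $(\S_g,\Delta)$.

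The main obstacle is the last ingredient in each phase: establishing that the predicted target spaces exist as projective varieties and that their natural ample polarizations are exactly the expected combinations of $\lambda$ and $\Delta$. A priori one knows neither that $R(s)$ is finitely generated nor that the models are projective, so this cannot be obtained by soft birational geometry alone; it is precisely where the GIT analyses of (pluri)canonically embedded curves in \cite{HH1, HH2} do the real work, through asymptotic Hilbert/Chow stability computations that are delicate exactly for the curves with elliptic tails, cusps, elliptic bridges, and tacnodes that sit on the walls $s=11$ and $s=10$.
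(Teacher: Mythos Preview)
The paper does not prove this theorem. It is stated in the introduction purely as background and motivation, with the proof delegated entirely to the cited references \cite{HH1, HH2}; the paper's own work concerns $\M_{1,n}$, not $\M_g$. So there is no ``paper's proof'' to compare against, and your write-up is really a sketch of what Hassett and Hyeon do rather than a gap to be filled in this paper.

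That said, as a summary of \cite{HH1, HH2} your outline is broadly accurate in spirit but contains a genuine error in the first phase. You claim that ``this cone is two-dimensional, hence polyhedral'' and then invoke Kleiman's criterion. But $N^1(\S_g)$ has rank $2+\lfloor g/2\rfloor$, generated by $\lambda$ and the boundary components $\delta_0,\ldots,\delta_{\lfloor g/2\rfloor}$; restricting attention to the two-dimensional slice spanned by $\lambda$ and the total boundary $\Delta$ does not let you conclude ampleness from positivity on two curve classes. The actual input here is the Cornalba--Harris/Moriwaki theorem that $s\lambda-\delta$ is ample on $\S_g$ for $s>11$ (which the present paper also invokes, as \cite[Theorem 1.3]{CH}, in the proof of Lemma \ref{L:BasicNefness}); this is a substantial positivity result, not a consequence of the Mori cone being polyhedral. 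A second minor point: the flip at $s=10$ is constructed in \cite{HH2} (Hassett--Hyeon), not Hyeon--Lee; the relevant GIT is for Chow/Hilbert points of bicanonical curves, and your description of the mechanism (small contraction of the elliptic-bridge locus, tacnodes appearing on the other side) is correct. Your closing paragraph correctly identifies where the hard work lies: the GIT stability analyses that produce the target spaces as projective varieties with the predicted polarizations.
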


\begin{figure}\label{F:slope2}
\scalebox{.75}{\includegraphics{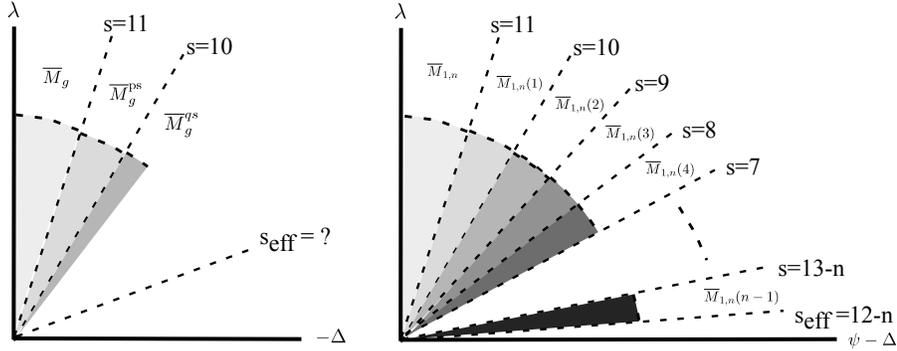}}\label{F:slope2}
\caption{Comparison of log minimal model program for $\M_{g}$ and $\M_{1,n}$.}
\end{figure}

Our results for $\M_{1,n}$ are connected to the log minimal model program for $\M_{g}$ by the following observation: For $g>>0$, we may define a closed immersion
$$
i: \S_{1,n} \hookrightarrow \S_{g},
$$
by gluing fixed tails of genus $g_1, \ldots, g_n$ (satisfying $g_1+ \ldots +g_n+1=g$) onto the $n$ marked points. One easily checks that the restriction of the divisor $s\lambda-\Delta$ on $\S_{g}$ to the subvariety $i(\S_{1,n})$ is simply $s\lambda+\psi-\Delta$, i.e. $i^*D(s)=D(s)$. Thus, \emph{our results track the effect of Hassett-Keel log minimal model program on $\M_{1,n}$, viewed as a subvariety of $\M_{g}$}. In our view, the fact that every birational model $\M_{1,n}^s$ admits a modular interpretation gives strong evidence that the models $\M_{g}^s$ should admit a modular interpretation. Furthermore, our results suggest that elliptic $m$-fold points should arise in the moduli problem associated to $\M_{g}^{s}$ at slope $s=12-m$.

Finally, we should remark that our main result can also be formulated as running a log minimal model program on $\M_{1,n}$ provided one scales $\Delta_{irr}$ rather than $\Delta$. Here, $\Delta_{irr}$ denotes the irreducible component of the boundary  whose generic point parametrizes an irreducible curve, and we set $\Delta_{red}:=\overline{\Delta \backslash \Delta_{irr}}$. Using the relations in $\Pic(\S_{1,n})$ (Proposition \ref{P:StartingPicardGroup}), one easily checks that
$$
s\lambda+\psi-\Delta \equiv K_{\S_{1,n}}+\alpha\Delta_{irr}+\Delta_{red} \text{ iff } \alpha=\frac{s-1}{12}
$$
Thus, our main result is equivalent to the statement
\begin{align*}
\Proj \oplus_{m \geq0}\Gamma(\S_{1,n}, m(K_{\S_{1,n}}+\alpha\Delta_{irr}+\Delta_{red}) )=
\begin{cases}
\M_{1,n}&\text{ iff }\alpha \in (5/6, \infty)\\
\M_{1,n}(1)&\text{ iff }\alpha \in (3/4,5/6]\\
\M_{1,n}(m)^*&\text{ iff }\alpha \in (\frac{10-m}{12}, \frac{11-m}{12})\\
\M_{1,n}(n-1)^*&\text{ iff }\alpha \in (\frac{11-n}{12},\frac{12-n}{12}]\\
\end{cases}
\end{align*}
Note that $\alpha$ becomes negative when $m \geq 11$, so that the birational models $\M_{1,n}(m)^*$ are only log canonical models for $m \leq 10$. An amusing consequence of this result is that the normalization of a versal deformation space for an elliptic $m$-fold point has log canonical singularities for $m \leq 10$. As far as we know, there is no proof of this fact by means of pure deformation theory.

It is natural to ask whether the log canonical models $\Proj \oplus_{m \geq0}\Gamma(\S_{1,n}, m(K_{\S_{1,n}}+\alpha\Delta))$ can be given a modular interpretation. In forthcoming work, we will extend our main result by considering
\begin{align*}
&D(s,t):=s\lambda+t\psi-\Delta, \\
&R(s,t):=\oplus_{m \geq 0}H^0(\S_{1,n}, mD(s,t)),\\
&\M_{1,n}^{s,t}:=\Proj R(s,t).
\end{align*}
We will show that each birational model $\M_{1,n}^{s,t}$ is isomorphic to the normalization of one of the moduli spaces of $(m,\mathcal{A})$-stable curves $\M_{1,\mathcal{A}}(m)$ introduced in \cite{SmythEI}. It is easy to see that $K_{\S_{1,n}}+\alpha\Delta$ is numerically equivalent to a divisor of the form $D(s,t)$, so we obtain an affirmative answer to the preceding question.

\subsection{Notation}\label{S:Notation}
Throughout this paper, we work over a fixed algebraically closed field $k$ of characteristic zero. An $n$-pointed curve $(C, \{p_i\}_{i=1}^{n})$ is a reduced, connected, one-dimensional scheme of finite type over $k$ with $n$ distinct smooth points $p_1, \ldots, p_n \in C$. A family of $n$-pointed curves $(f:\C \rightarrow T, \sigman)$ is a flat, proper morphism $\C \rightarrow T$ with $n$ sections $\sigman$, whose geometric fibers are $n$-pointed curves. We will frequently refer to definitions introduced in our earlier paper \cite{SmythEI}. In particular, we assume the reader 
is familiar with the definition of an \emph{elliptic $m$-fold point} \cite[Definition 2.1]{SmythEI} and an \emph{$n$-pointed $m$-stable curve} \cite[Definition 3.7]{SmythEI} .
\subsection{Outline of paper}
In this section, we outline the contents of this paper. In Section \ref{S:Geometry}, we study the stratification of $\S_{1,n}(m)$ by singularity type, i.e. the stratification
 $$\S_{1,n}(m)=\mathcal{M}_{1,n} \coprod \E_0 \coprod \E_1 \coprod \ldots \coprod \E_{m},$$
where $\E_{0}$ is the locus of singular curves with only nodal singularities, and $\E_{l}$ $(l \geq 1)$ is the locus of curves with an elliptic $l$-fold point. In Section 2.1, we use deformation theory to analyze local properties of $\S_{1,n}(m)$ and the individual strata $\E_{l}$. In Section \ref{S:AttachingModuli}, we study the ``moduli of attaching data" of the elliptic $m$-fold point. We show that isomorphism classes of elliptic $m$-fold pointed curves with given pointed normalization $(\tilde{C}, \{q_i\}_{i=1}^{m})$ are naturally parameterized by $(k^{*})^{m-1}$. In Section 2.3, we construct a modular compactification $(k^{*})^{m-1} \subset \P^{m-1}$ by considering all isomorphism classes of elliptic $m$-fold pointed curves whose normalization is obtained from the given $(\tilde{C}, \{q_i\}_{i=1}^{m})$ by sprouting semistable $\P^{1}$'s along a proper subset of the $\{q_i\}_{i=1}^{m}$. We show that this construction is compatible with families, i.e. given a family of pointed normalizations $(\pi:\tilde{\C} \rightarrow T, \{\sigma_i\}_{i=1}^{m})$, we consider
\begin{align*}
E:&=\oplus_{i=1}^{m}\sigma_i^*\O_{\tilde{\C}}(-\sigma_i),\\
\P:&=\P(E) \rightarrow T,
\end{align*}
and we construct a family of curves over $\P$, whose fibers range over all isomorphism classes of elliptic $m$-fold pointed curves whose normalization is obtained from a fiber $(\tilde{C}_t, \{\sigma_i(t)\}_{i=1}^{m})$  by sprouting semistable $\P^{1}$'s along a proper subset of $\{\sigma_i(t)\}_{i=1}^{m}$. In Section \ref{S:Stratification}, we use this construction to describe the strata $\E_{l}$ explicitly as projective bundles over products of moduli spaces of genus zero stable curves.

 In Section \ref{S:IntersectionTheory}, we establish a framework for doing intersection theory on $\M_{1,n}(m)$. The fact that $\M_{1,n}(m)$ may be non-normal for large $m$ presents a technical difficulty, which we circumvent by simply passing to the normalization $\M_{1,n}(m)^*$.  In Section \ref{S:PicardGroup}, we show that $\M_{1,n}(m)^*$ is $\Q$-factorial and that $\Pic_{\Q}(\M_{1,n}(m)^*)$ is naturally generated by tautological classes. In Section \ref{S:1ParameterFormulas}, we explain how to evaluate the degrees of tautological classes on one-parameter families of $m$-stable curves. The usual heuristics for nodal curves are not sufficient since families of $m$-stables curves exhibit novel features not encountered with stable curves. For example, one can have non-isotrivial families of $m$-stable whose pointed normalization \emph{is} isotrivial. Furthermore, whereas the limit of a node is always a node in a family of stable curves, non-disconnecting nodes degenerate to more complicated singularities in families of $m$-stable curves. We explain techniques for computing the degree of tautological classes on such families.
 
In Section \ref{S:MainTheorem}, we prove our main result. In Section \ref{S:Discrepancy}, we analyze the birational contraction $\phi: \M_{1,n} \dashrightarrow \M_{1,n}(m)^*$, and show that 
 $$\phi^*\phi_*D(s)-D(s) \geq 0 \text{ for } s \in (11-m,12-m).$$
This implies that the section ring of $D(s)$ on $\M_{1,n}$ is identical to the section ring of $\phi_*D(s)$ in $\M_{1,n}(m)^*$. Thus, to prove $\M_{1,n}^s=\M_{1,n}(m)^*$, it suffices to show that $\phi_*D(s)$ is ample. In Section \ref{S:AmpleDivisors}, we use the intersection theory developed in Section 3 to prove that $\phi_*D(s)$ has positive intersection on every curve in $\M_{1,n}(m)^*$ for $s \in (m,m+1)$. We then apply Kleiman's criterion to conclude that the divisor $D(s)$ is ample. Section \ref{S:Singularities} is logically independent of the rest of the paper; we use a discrepancy calculation to prove that the stacks $\S_{1,n}(m)$ must be singular for $m \geq 6$.

\textbf{Acknowledgements.} I am very grateful to Dawei Chen, Maksym Fedorchuk, Fred van der Wyck, Joe Harris, and Brendan Hassett for the numerous comments and ideas they offered throughout this project. During the preparation of this work, the author was partially supported by NSF grant 0901095.

\section{Geometry of $\S_{1,n}(m)$}\label{S:Geometry}

\subsection{Deformation theory}\label{S:DeformationTheory}

The deformation theory of stable curves implies that $\S_{1,n}$ is a smooth Deligne-Mumford stack with normal crossing boundary, and that $\S_{1,n}$ has a locally closed stratification by topological type. In this section, we investigate the corresponding properties for $\S_{1,n}(m)$. We assume the reader is familiar with formal deformation theory (as in \cite{Sernesi}), and consider the following deformation functors, from the category of Artinian $k$-algebras with residue field $k$ to sets.
\begin{align*}
\Def_{(C,\pn)}: &\,\, A \rightarrow \{\text{ Flat Deformations of $C$ over $A$ with $n$ sections $\sigma_1, \ldots, \sigma_n$ }\}\\
\Def_{C}: &\,\, A \rightarrow \{\text{ Flat Deformations of $C$ over $A$ }\} \\
\Def_{(q_i \in C)}: &\,\, A \rightarrow \{\text{ Flat deformations of $\Spec \O_{C,q_i}$ over $A$ }\}
\end{align*}

\begin{lemma}\label{L:FormalSmoothness} Suppose that $(C \pn)$ is a pointed curve with reduced singular points $q_1, \ldots, q_m \in C$. The natural morphisms of deformation functors
$$
\emph{\Def}_{(C,\pn)} \rightarrow \emph{\Def}_C \rightarrow \prod_{i=1}^{m}\emph{\Def}_{(q_i \in C)}
$$
are formally smooth of relative dimension $n$ and $h^1(C, \Omega_{C}^{\vee})$ respectively. 
\end{lemma}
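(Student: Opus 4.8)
The plan is to treat the two maps in the composition separately. For $\Def_{(C,\pn)}\to\Def_C$, fix a deformation $f\colon\C_A\to\Spec A$ of $C$; relative to it, the deformation problem is that of extending the $n$ sections. Since the $p_i$ are distinct \emph{smooth} points of $C$, each $\sigma_i$ factors through the relative smooth locus $\C_A^{\mathrm{sm}}$, which is a closed subscheme of the smooth locus $\C_{A'}^{\mathrm{sm}}$ of any lift $\C_{A'}$ of $\C_A$ over a square-zero extension $A'\to A$. As $\C_{A'}^{\mathrm{sm}}\to\Spec A'$ is smooth, the infinitesimal lifting criterion extends each $\sigma_i$ to a section of $\C_{A'}$; this gives formal smoothness. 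The relative tangent space is $\bigoplus_{i=1}^{n}T_{p_i}C$ — the first-order lifts of the sections with $\C$ held fixed — which has dimension $n$ because the $p_i$ are smooth, so the relative dimension is $n$.

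For $\Def_C\to\prod_{i=1}^{m}\Def_{(q_i\in C)}$ I would use the local-to-global spectral sequence for cotangent cohomology. Write $\mathcal{T}^q_C:=\Ext^q_{\O_C}(\mathbb{L}_{C/k},\O_C)$, so that $\mathcal{T}^0_C=\Hom_{\O_C}(\Omega_{C/k},\O_C)=\Omega_C^{\vee}$, and recall (see \cite{Sernesi}) that deformations of $C$ (resp.\ of $\Spec\O_{C,q_i}$) are controlled by the global groups $T^i:=\text{Ext}^i_{\O_C}(\mathbb{L}_{C/k},\O_C)$ (resp.\ the local groups $T^i_{q_i}:=(\mathcal{T}^i_C)_{q_i}$), with first-order deformations in degree $1$ and obstructions in degree $2$. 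Because $C$ is reduced and smooth away from $q_1,\dots,q_m$, each $\mathcal{T}^q_C$ with $q\geq1$ is a coherent sheaf supported on the finite set $\{q_1,\dots,q_m\}$, so that $H^{>0}(C,\mathcal{T}^q_C)=0$ and $H^0(C,\mathcal{T}^q_C)=\bigoplus_{i=1}^{m}T^q_{q_i}$; and because $C$ is one-dimensional, $H^p(C,-)=0$ for $p\geq2$. Feeding this into the spectral sequence $H^p(C,\mathcal{T}^q_C)\Rightarrow T^{p+q}$ produces a short exact sequence
\[
0\to H^1(C,\Omega_C^{\vee})\to T^1\to\bigoplus_{i=1}^{m}T^1_{q_i}\to0,
\]
together with an isomorphism $T^2\xrightarrow{\ \sim\ }\bigoplus_{i=1}^{m}T^2_{q_i}$; in both, the maps are those induced on tangent and obstruction spaces by the morphism of functors (the spectral-sequence edge homomorphisms).

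To conclude formal smoothness, let $A'\to A$ be a square-zero extension with kernel $I$, let $\C_A\in\Def_C(A)$, and suppose we are given lifts $\theta_i'\in\Def_{(q_i\in C)}(A')$ of the local deformations $\theta_i$ that $\C_A$ induces at the $q_i$. The obstruction $o(\C_A)\in T^2\otimes_k I$ to lifting $\C_A$ over $A'$ maps, under the injection $T^2\hookrightarrow\bigoplus_i T^2_{q_i}$ above, to $\bigoplus_i o(\theta_i)\otimes I$, which vanishes since each $\theta_i$ lifts; hence $\C_A$ admits a lift $\C_{A'}$. The set of lifts of $\C_A$ is a torsor under $T^1\otimes_k I$, mapping equivariantly — via the surjection $T^1\twoheadrightarrow\bigoplus_i T^1_{q_i}$ — onto the product of the torsors of lifts of the $\theta_i$; so we may choose $\C_{A'}$ so that its restriction at each $q_i$ is the prescribed $\theta_i'$. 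This establishes formal smoothness, and since the relative tangent space is $\ker\!\big(T^1\to\bigoplus_i T^1_{q_i}\big)=H^1(C,\Omega_C^{\vee})$, the relative dimension is $h^1(C,\Omega_C^{\vee})$.

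The main obstacle is the deformation-theoretic bookkeeping underlying the second map: since the singularities here need not be complete intersections (the elliptic $m$-fold point is not lci for $m\geq4$), one must genuinely work with the cotangent complex $\mathbb{L}_{C/k}$ and the cotangent cohomology groups $T^i$, and check that the global-to-local restriction maps on $T^1$ and $T^2$ are the edge homomorphisms of the spectral sequence. Granting that, the remainder is the diagram chase above, powered only by the vanishing $H^{\geq2}(C,-)=0$ and by $\mathcal{T}^{\geq1}_C$ being a skyscraper sheaf.
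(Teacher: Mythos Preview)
Your argument is correct and follows essentially the same route as the paper: both invoke the local-to-global spectral sequence for cotangent cohomology, use $H^{\geq 2}(C,-)=0$ and the fact that $\mathcal{T}^{\geq 1}_C$ is supported on the finite singular set to obtain the short exact sequence on $T^1$ and the injection on $T^2$, and then deduce formal smoothness. The only difference is cosmetic: where you spell out the obstruction/torsor argument by hand, the paper simply cites the standard criterion (surjective on tangent spaces, injective on obstruction spaces $\Rightarrow$ formally smooth) from \cite[Proposition~2.3.6]{Sernesi}.
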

\begin{proof}
Since the marked points $p_1, \ldots, p_n$ are smooth,
$$\Def_{(C,\pn)} \rightarrow \Def_{C}$$
is clearly formally smooth of relative dimension $n$. The fact that
$$\Def_{C} \rightarrow \prod_{i=1}^{m}\Def_{(q_i \in C)}$$
is formally smooth of relative dimension $h^1(C, \Omega_{C}^{\vee})$
is contained in \cite[Proposition 1.5]{DM} under the assumption that $C$ has local complete intersection singularities, but elliptic $m$-fold points are not local complete intersections for $m \geq 5$. Thus, we must use the cotangent complex. 

By \cite[C.4.8 and C.5.1]{GLS}, there exists a sequence of sheaves $\{\T^{i}_C: i \geq 0\}$, a sequence of finite-dimensional $k$-vector spaces $\{T^{i}_{C}: i \geq 0\}$, and a spectral sequence $E_{2}^{p,q}=H^p(\T^{q}_C) \rightarrow T^{p+q}_C$ with the following properties:
\begin{enumerate}
\item The sheaves $\{\T^{i}_{C}: i \geq 1\}$ are supported on the singular locus of $C$,
\item$\T^0_C=\Hom(\Omega_{C},\O_{C})$,
\item$T^{1}_C =\Def_{C}(k[\e]/(\e^2))$,
\item$T^{2}_C$ is an obstruction theory for $\Def_{C}$,
\item$H^0(C,(\T^{1}_{C})_{q}) =\Def_{(q \in C)}(k[\e]/(\e^2))$,
\item$H^0(C,(\T^{2}_{C})_{q})$ is an obstruction theory for $\Def_{(q \in C)}$.
\end{enumerate}

Since $C$ is a curve and $\T^{1}_C$ is supported on the singular locus, we have $H^2(\T^0_C)=0$ and $H^1(\T^1_C)=0$. The spectral sequence $E_2^{p,q}$ then gives an exact sequence
$$
0 \rightarrow H^1(\T_C^0) \rightarrow T_C^1 \rightarrow H^0(\T_C^1) \rightarrow 0 \rightarrow T^2_C \rightarrow H^0(\T_C^2).
$$
Since $\T_C^1$ and $\T_{C}^2$ are supported on the singular locus, we have
\begin{align*}
H^0(\T_C^1)&=\oplus_{i=1}^{m} H^0(X,(\T^{1}_{C})_{q_i})\\
H^0(\T_C^2)&=\oplus_{i=1}^{m} H^0(X,(\T^{2}_{C})_{q_i}).
\end{align*}
Thus, the exact sequence shows that $\Def_{C} \rightarrow \oplus_{i=1}^{m} \Def_{(q_i \in C)}$ induces a surjection on first-order deformations and an injection on obstruction spaces. Formal smoothness follows by \cite[Proposition 2.3.6]{Sernesi}. Finally, the relative dimension of the map on first-order deformations is evidently $\dim H^1(\T_C^0) = h^1(C, \Omega_{C}^{\vee})$.
\end{proof}

\begin{corollary}\label{C:Smoothness}
$\S_{1,n}(m)$ is smooth iff $m \leq 5$.
\end{corollary}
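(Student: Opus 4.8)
The plan is to use Lemma~\ref{L:FormalSmoothness} to reduce smoothness of $\S_{1,n}(m)$ to a purely local statement about the versal deformation space of the singularities that actually occur on $m$-stable curves, namely nodes and elliptic $l$-fold points with $l \leq m$. Since nodes have smooth (one-dimensional) versal deformation spaces, and since the maps $\Def_{(C,\pn)} \to \Def_C \to \prod_i \Def_{(q_i \in C)}$ are formally smooth, $\S_{1,n}(m)$ is smooth at $(C,\pn)$ if and only if $\Def_{(q \in C)}$ is smooth for each elliptic $l$-fold point $q$ of $C$ (smoothness being preserved under formally smooth base change in both directions). So the corollary comes down to: the versal deformation space of the elliptic $l$-fold point is smooth for $l \leq 5$ and singular for $l = 6, \ldots$ Actually only the "iff" needs both directions, so I need (i) smoothness for $l \leq 5$, and (ii) singularity for some (hence, by the stratification, the generic) point when $m \geq 6$.

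For direction (i), $l \leq 5$: the elliptic $l$-fold point is a local complete intersection for $l \leq 4$ (indeed $l=1,2,3$ are plane curves, and $l=4$ is given by $x_ix_j - x_ix_h$ cutting out $4$ lines in $\mathbb{A}^3$, codimension $2$ in a $3$-fold ambient space). For $l \leq 3$ a plane-curve singularity has unobstructed deformations ($T^2 = 0$), and for $l=4$ one checks directly (or cites the explicit versal deformation computed in \cite{SmythEI}) that $T^2_{(q\in C)}$ vanishes, so $\Def_{(q\in C)}$ is smooth. For $l = 5$, the elliptic $5$-fold point is no longer l.c.i., but one can still verify smoothness of its versal deformation space --- I would compute $T^1$ and $T^2$ from the explicit presentation $k[[x_1,\dots,x_4]]/I_5$ (or appeal to the explicit deformation space description given in \cite{SmythEI}), show the base is a smooth variety of the expected dimension, and conclude $\Def_{(q\in C)}$ is smooth.

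For direction (ii), $m \geq 6$: I need to show $\S_{1,n}(m)$ is singular somewhere. The natural candidate is a curve $C$ with an elliptic $m$-fold point, for which it suffices to show $\Def_{(q\in C)}$ is singular at the origin for the elliptic $l$-fold point with $l = 6$ (the generic point of the stratum $\E_6 \subset \E_m$). Here I would again argue from the explicit equations: $I_6 \subset k[[x_1,\dots,x_5]]$ has $\binom{5}{2}\cdot\ldots$ generators, and computing the minimal free resolution one sees the ideal of relations among the relations is nontrivial in the right degree, so that $T^2 \neq 0$ in a way that genuinely obstructs --- equivalently, one exhibits that the versal base is cut out by more than $\mathrm{codim}$ equations. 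Rather than grinding through the syzygies, the cleanest route (and the one consistent with the paper's philosophy, cf. Section~\ref{S:Singularities}) may be to cite the explicit versal deformation space of the elliptic $6$-fold point, or to defer the full argument to Corollary~\ref{C:Singular} and Theorem~\ref{T:Singularities}, where intersection-theoretic discrepancy calculations are used; at this point in the paper one can simply record the equivalence with a local computation and note that the "only if" direction will follow from \ref{C:Singular}.

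The main obstacle is direction (ii): without an a priori handle on the versal deformation space of the elliptic $l$-fold point for $l \geq 6$ (which the introduction explicitly says is not available by pure deformation theory), proving singularity requires either a nontrivial explicit syzygy/obstruction computation for $I_6$, or --- as the paper in fact does --- bootstrapping from global intersection theory on $\S_{1,n}(m)$ developed later. Direction (i) is comparatively routine, reducing to a finite check that $T^2$ vanishes for $l \leq 5$.
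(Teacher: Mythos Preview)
Your overall strategy matches the paper's: reduce via Lemma~\ref{L:FormalSmoothness} to smoothness of the versal deformation of the elliptic $l$-fold point, handle $l\leq 5$ directly, and defer $m\geq 6$ to Corollary~\ref{C:Singular}. Your treatment of $l\leq 3$ (plane curve, hence l.c.i.) is the same as the paper's. For $l=4$ you actually do slightly better than the paper: four general lines through the origin in $\mathbb{A}^3$ \emph{are} a complete intersection of two quadrics (indeed $x_3(x_1-x_2)=x_2(x_1-x_3)-x_1(x_2-x_3)$, so $I_4$ is generated by two elements), so your l.c.i.\ claim is correct. The paper instead invokes the Hilbert--Burch structure theorem for Cohen--Macaulay codimension-two quotients, which is valid but unnecessary here.

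The genuine gap is $l=5$. The elliptic $5$-fold point (five lines in $\mathbb{A}^4$) is \emph{not} a local complete intersection, and ``compute $T^1$ and $T^2$ from the presentation'' or ``appeal to the explicit deformation space in \cite{SmythEI}'' is not a proof---no such explicit versal family is written down there. The paper's argument is a specific structure theorem: the elliptic $5$-fold point is a Gorenstein quotient of a regular local ring of dimension four, i.e.\ Gorenstein in codimension three, and by the Buchsbaum--Eisenbud theorem such ideals are Pfaffian and have unobstructed deformations (\cite[Theorem~9.7]{HarDef}). This is the missing ingredient in your sketch; without it, the ``if'' direction is incomplete precisely at the borderline case $l=5$.
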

\begin{proof}
By Lemma \ref{L:FormalSmoothness}, $\S_{1,n}(m)$ is a smooth at a point $[C] \in \S_{1,n}(m)$ iff the local rings $\O_{C,p}$ have unobstructed deformations for all singular points $p \in C$. For $m=1,2,3$, the elliptic $m$-fold point is a local complete intersection, hence has unobstructed deformations. The cases $m=4,5$ are handled by slightly less well-known criteria: the local ring $\O_{C,p}$ is a Cohen-Macaulay quotient of a regular local ring of dimension three when $m=4$, and a Gorenstein quotient of a regular local ring of dimension four when $m=5$ \cite[Proposition 2.5]{SmythEI}. There is a determinental structure theorem for such local rings which implies that they have unobstructed deformations \cite[Theorem 8.3 and Theorem 9.7]{HarDef}. This shows that $\S_{1,n}(m)$ is smooth when $m \leq 5$. We will show that $\S_{1,n}(m)$ is singular for $m \geq 6$ in Section \ref{S:Singularities}.
\end{proof}

\begin{corollary}\label{C:Boundary}
The boundary $\Delta \subset \S_{1,n}(m)$ is normal crossing iff $m=0$.
\end{corollary}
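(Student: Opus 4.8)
The plan is to study $\Delta$ locally via completed local rings, using Lemma \ref{L:FormalSmoothness} to identify the local structure of $\Delta$ at a point $[C]$ with the product of the discriminant loci of the isolated singularities of $C$ inside their versal deformation spaces, times a smooth factor; normal crossing (or its failure) can then be read off directly from these local models.

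For the implication $m=0\Rightarrow\Delta$ normal crossing, note that every $0$-stable curve is nodal, so $\S_{1,n}(0)=\S_{1,n}$. At a point $[C]$ with nodes $q_1,\ldots,q_k$, Lemma \ref{L:FormalSmoothness} identifies $\widehat{\mathcal{O}}_{\S_{1,n}(0),[C]}$ with a power series ring over $\prod_{i=1}^k\widehat{\mathcal{O}}_{\Def_{(q_i\in C)}}$; since the versal deformation of a node is $\Spec k[[t_i]]$ with singular fibre exactly over $V(t_i)$, we get $\widehat{\mathcal{O}}_{\S_{1,n}(0),[C]}\cong k[[t_1,\ldots,t_k,u_1,\ldots,u_N]]$ with $\Delta$ locally cut out by $t_1\cdots t_k$, which is normal crossing. (This is of course just the classical statement for $\S_{1,n}$, recorded for completeness.)

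For the converse I would argue by contraposition, producing, for each $m\ge 1$, a point of $\S_{1,n}(m)$ at which $\Delta$ fails to be normal crossing. Take $C$ to be an irreducible cuspidal plane cubic with $n$ distinct marked points on its smooth locus; its unique singularity is an ordinary cusp, i.e.\ an elliptic $1$-fold point. One checks directly from \cite[Definition 3.7]{SmythEI} that $(C,\pn)$ is $m$-stable: condition (1) holds since $1\le m$; condition (2) holds since the only connected arithmetic-genus-one subcurve is $C$ itself, which meets $\overline{C\setminus C}=\emptyset$ and carries all $n>m$ marked points; and condition (3) holds since $\tilde C\cong\P^1$ carries $n+1\ge 3$ distinguished points (the $n$ marked points and the preimage of the cusp), using $n\ge m+1\ge 2$. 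Since the cusp is a plane-curve singularity it is unobstructed, so $[C]$ is a \emph{smooth} point of $\S_{1,n}(m)$ for every $m\ge 1$ (even when $m\ge 6$). Applying Lemma \ref{L:FormalSmoothness} with the single singular point $q_1$, and using that the versal deformation of the cusp is $\Spec k[[s,t]]$ with universal equation $y^2=x^3+sx+t$, we obtain $\widehat{\mathcal{O}}_{\S_{1,n}(m),[C]}\cong k[[s,t,u_1,\ldots,u_N]]$ with $\Delta$ locally cut out by the discriminant $4s^3+27t^2$. But $V(4s^3+27t^2)$ is analytically irreducible and singular (it is again a cuspidal curve), whereas a normal crossing divisor with a single analytic branch is smooth; hence $\Delta$ is not normal crossing at $[C]$.

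The step requiring care is the claim that, near $[C]$, the boundary $\Delta$ is \emph{exactly} the pullback of the discriminant of the versal deformation of the cusp, with no extra vanishing; this is precisely where Lemma \ref{L:FormalSmoothness} does the work, since formal smoothness of $\Def_{(C,\pn)}\to\Def_{(q_1\in C)}$ means the universal curve near $[C]$ is formally pulled back from the versal deformation of the cusp, while every other point of $C$ is smooth and stays smooth under small deformation, so that the singular-fibre locus is $V(4s^3+27t^2)$ times a smooth factor. Everything else is bookkeeping, and the failure of normal crossing is then immediate from the cuspidal shape of the cusp's discriminant.
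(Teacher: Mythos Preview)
Your proof is correct and follows essentially the same approach as the paper: for $m\ge 1$, exhibit an $m$-stable curve with a single cusp, invoke Lemma \ref{L:FormalSmoothness} to identify the local structure of $\Delta$ with the discriminant $4s^3+27t^2$ of the versal cusp deformation (times a smooth factor), and observe this is not normal crossing. You supply more detail than the paper does---in particular you write out the $m=0$ direction and verify $m$-stability of the cuspidal curve carefully---but the argument is the same.
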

\begin{proof}
If $m \geq 1$, then there exists an $m$-stable curve $(C, \pn)$ with a single cusp $q \in C$ and no other singular points. The family
$$
\Spec k[a,b,x,y]/(y^2=x^3+ax+b) \rightarrow \Spec k[a,b]
$$
is a miniversal deformation for the cusp and in these coordinates the locus of singular deformations is cut out by $b^2-4a^{3}$. It follows from Lemma \ref{L:FormalSmoothness} that, locally around $[C, \pn] \in \S_{1,n}(m)$, we can choose two smooth coordinate $a$ and $b$ such that $\Delta$ is defined by the equation $b^2-4a^3$. In particular, $\Delta$ is not a normal crossing divisor.
\end{proof}

\begin{corollary}[Stratification of $\S_{1,n}(m)$ by singularity type]\label{C:BasicStratification}
Consider the set-theoretic decomposition given by
$$
\S_{1,n}(m)= \mathcal{M}_{1,n} \coprod \E_{0} \coprod \E_{1} \coprod \ldots \coprod \E_{m},
$$
where $\E_{i}$
\begin{align*}
\E_{0}&:=\{[C] \in \S_{1,n}(m) | \text{ $C$ is singular with only nodal singularities}\},\\
\E_{l}&:=\{[C] \in \S_{1,n}(m) | \text{ $C$ has an elliptic $l$-fold point}\}.
\end{align*}
Then we have
\begin{enumerate}
\item $\E_{l} \subset \S_{1,n}(m)$ is a locally closed substack.
\item For $l \geq 1$, $\E_{l}$ is smooth.
\item$\E_{0}$ has normal crossing singularities and pure codimension one.
\item $\overline{\E_{l}} \subset \E_{l} \coprod \E_{l+1} \coprod \E_{l+2} \coprod \ldots \coprod \E_{m}.$
\end{enumerate}
\end{corollary}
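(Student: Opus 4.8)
The plan is to reduce all four assertions to a local computation at a point $[C,\pn]\in\S_{1,n}(m)$, using the formal smoothness of Lemma~\ref{L:FormalSmoothness}. Let $q_1,\dots,q_r$ be the singular points of $C$. An $m$-stable curve carries at most one elliptic point, so at most one of them --- say $q_1$ --- is an elliptic $l_0$-fold point with $l_0\ge 1$, and $q_2,\dots,q_r$ are nodes. By Lemma~\ref{L:FormalSmoothness}, $\Def_{(C,\pn)}$ is formally a smooth fibration over $\prod_{i=1}^{r}\Def_{(q_i\in C)}$; so, formally, $\S_{1,n}(m)$ near $[C]$ is a product of the miniversal deformation spaces of its singular points with an affine space, and since the singular locus of a nearby curve $C_b$ is the disjoint union of the singular loci of its individual singularities, this product is compatible with the stratification by singularity type. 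I will use two local models: for a node, $\Def_{(q_i\in C)}=\Spec k[[t_i]]$ with universal family $\Spec k[[x,y,t_i]]/(xy-t_i)$, so every fibre is smooth ($t_i\neq 0$) or a node ($t_i=0$); and for the elliptic $l$-fold point, with miniversal deformation $\mathcal{C}\to B$, the following two facts:
\begin{itemize}
\item[$(\star_a)$] the locus $\{b\in B:\mathcal{C}_b\text{ has an elliptic }l\text{-fold point}\}$ is set-theoretically the single point $0\in B$;
\item[$(\star_b)$] for every $b\neq 0$, each singularity of $\mathcal{C}_b$ is a node or an elliptic $l'$-fold point with $l'<l$.
\end{itemize}

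Granting $(\star_a)$ and $(\star_b)$, the four parts follow. For (3): near $[C]\in\E_0$ every $q_i$ is a node, so the preimage of $\E_0$ in $\Def_{(C,\pn)}$ is $\bigcup_{i=1}^{r}\{t_i=0\}$ (times the affine fibre), the union of smooth coordinate divisors cut out by members $t_1,\dots,t_r$ of a regular system of parameters --- a normal crossing divisor of pure codimension one; in particular a neighbourhood of $[C]$ lies in $\mathcal{M}_{1,n}\sqcup\E_0$. For (4): near a point of $\mathcal{M}_{1,n}$ the miniversal deformation has only smooth fibres; near a point of $\E_0$, only smooth or nodal fibres; near a point of $\E_{l'}$ with $1\le l'<l$, combining $(\star_b)$ for $q_1$ with the node model for the other $q_i$, every fibre has only nodes and elliptic $\le l'$-fold points. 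So no curve near such a point lies in $\E_l$, i.e.\ $\overline{\E_l}$ is disjoint from $\mathcal{M}_{1,n}\cup\E_0\cup\dots\cup\E_{l-1}$. For (1): by (4), $\E_{l+1}\cup\dots\cup\E_m$ is closed (it contains $\overline{\E_{l''}}$ for each $l''>l$), so $\E_l=\overline{\E_l}\setminus(\E_{l+1}\cup\dots\cup\E_m)$ is open in $\overline{\E_l}$, hence locally closed in $\S_{1,n}(m)$. For (2), $l\ge 1$: at $[C]\in\E_l$ we have $l_0=l$, and by $(\star_a)$ the preimage of $\E_l$ is $\{0\}\times\prod_{i\ge 2}\Spec k[[t_i]]$ times the affine fibre, which is regular since $\Def_{(C,\pn)}\to\prod\Def_{(q_i\in C)}$ is formally smooth over the regular scheme $\{0\}\times\prod_{i\ge2}\Spec k[[t_i]]$; so $\E_l$ is smooth.

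Everything therefore comes down to $(\star_a)$ and $(\star_b)$, the local deformation theory of the elliptic $l$-fold point, and this is the step I expect to be the main obstacle. For $l=1,2,3$ the singularity is $A_2$, $A_3$, or $D_4$, and both statements are classical facts about miniversal deformations of simple singularities: the deepest stratum is the origin, and the remaining fibres carry only nodes, cusps and (for $D_4$) tacnodes, which are exactly the nodes and elliptic $l'$-fold points with $l'<l$ in this range. For $l\ge 4$ the elliptic $l$-fold point is not a complete intersection. Here $(\star_a)$ is the statement that it has no moduli: $l$ general lines through the origin in $\mathbb{A}^{l-1}$ --- equivalently $l$ general points of $\mathbb{P}^{l-2}$ --- form a single $\mathrm{PGL}_{l-1}$-orbit, the dimension count $(l-1)^2-1=l(l-2)$ being exact, so the equisingular stratum in $B$ is a $\mathbb{G}_m$-invariant subscheme with vanishing tangent space at $0$ and therefore reduces set-theoretically to $\{0\}$. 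The statement $(\star_b)$ requires controlling the whole miniversal base: for $l=4,5$ this can be done using the determinantal structure theorems for Cohen--Macaulay codimension-two, resp.\ Gorenstein codimension-three, quotients already invoked in the proof of Corollary~\ref{C:Smoothness} (\cite[Proposition 2.5]{SmythEI}, \cite[Theorems 8.3 and 9.7]{HarDef}); for $l\ge 6$, where even the reducedness of $B$ is unknown, I would instead extract $(\star_b)$ from the analysis of $m$-stable degenerations underlying the proof of the properness theorem \cite[Theorem 3.8]{SmythEI}, which shows that in a one-parameter family of $m$-stable curves whose general fibre has an elliptic $l$-fold point the special fibre has an elliptic $l''$-fold point for some $l''\ge l$.
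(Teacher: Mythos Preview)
Your approach is essentially the same as the paper's: reduce to the local structure via Lemma~\ref{L:FormalSmoothness}, use that the miniversal deformation of a node is smooth, and identify $\E_l$ with a fiber of the map to the miniversal deformation space of the elliptic $l$-fold point. The paper carries this out somewhat more briskly by working with an \'etale chart $U\to\S_{1,n}(m)$ and the composite map $s:U\to\prod_i\Ver(q_i\in C)\to\Ver(q_0\in C)$; it observes that $s$ is smooth (by Lemma~\ref{L:FormalSmoothness} plus smoothness of the node deformations), so the fiber $s^{-1}(s(0))$ is smooth and closed, and identifies this fiber with $\pi^{-1}(\E_l)$.

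The main difference is in how you and the paper handle your facts $(\star_a)$ and $(\star_b)$. The paper does not re-derive them: for $(\star_b)$ it simply cites \cite[Lemma~3.10]{SmythEI}, which states that elliptic $m$-fold points only deform to elliptic $l$-fold points with $l<m$ (this is the right reference, not the properness theorem \cite[Theorem~3.8]{SmythEI} you invoke), and $(\star_a)$ is asserted implicitly in the line ``$\pi^{-1}(\E_l)\subset U$ is simply the fiber of $s$ over $s(0)$.'' Your explicit argument for $(\star_a)$ via the projective rigidity of $l$ general points in $\mathbb{P}^{l-2}$ is correct in spirit but unnecessary once one has \cite[Lemma~3.10]{SmythEI}, since that lemma in particular forbids a nontrivial deformation of an elliptic $l$-fold point from again having an elliptic $l$-fold point. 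So your proof is correct, but you can shorten it considerably by citing \cite[Lemma~3.10]{SmythEI} in place of your entire final paragraph.
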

\begin{proof} First, we show that if $l \geq 1$, then $\E_{l} \subset \S_{1,n}(m)$ is smooth and locally closed. Suppose $(C, \pn)$ is an $m$-stable curve with an elliptic $l$-fold point $q_0 \in C$ and nodes $q_1, \ldots, q_k \in C$. There exists an etale neighborhood of $[C, \pn]$, say
\begin{align*}
\pi: (U,0) &\rightarrow \S_{1,n}(m)\\
0 &\rightarrow [C,\pn],
\end{align*}
and a morphism
$$
s: U \rightarrow \prod_{i=0}^{k}\Ver(q_i \in C) \rightarrow \Ver(q_0 \in C)
$$
where $\Ver(q_i \in C)$ is the base of a miniversal deformation of the singularity $q_i \in C$. Note that $\pi^{-1}(\E_{l}) \subset U$ is simply the fiber of $s$ over $s(0) \in \Ver(C,q_0).$ Using Lemma \ref{L:FormalSmoothness} and the fact that the miniversal deformation space of a node is smooth, we conclude that $s$ is smooth in a neighborhood of $0$,  so $s^{-1}(s(0)) \subset U$ is a smooth, closed subvariety of $U$. It follows that $\E_{l} \subset \S_{1,n}(m)$ is smooth and locally closed.

The argument that $\E_{0}$ is locally closed with pure codimension one and normal crossing singularities is essentially identical: if $(C, \pn)$ is an $m$-stable curve with nodes $q_1, \ldots, q_k \in C$, there is an etale neigborhood $U$ of $[C,\pn] \in \S_{1,n}(m)$ and maps
$$
s_i: U \rightarrow \prod_{i=1}^{k} \Ver(q_i \in C) \rightarrow \Ver(C,q_i),
$$
and $\pi^{-1}(\E_{0})$ is the union of the fibers $s_{i}^{-1}(s_{i}(0))$ for $i=1, \ldots, k$.

Finally, to see that $\overline{\E_{l}}=\E_{l} \coprod \E_{l+1} \coprod \E_{l+2} \coprod \ldots \coprod \E_{m}$, it is sufficient to note that elliptic $m$-fold points only deform to elliptic $l$-fold points if $l < m$. This fact is proved in \cite[Lemma 3.10]{SmythEI}.
\end{proof}

In order to describe the strata $\E_{l}$ explicitly, we need to understand the moduli of attaching data of the elliptic $m$-fold point.

\subsection{Moduli of attaching data of the elliptic $m$-fold point}\label{S:AttachingModuli}
It is well-known that if $q \in C$ is node, then $C$ is determined (up to isomorphism) by its normalization $\tilde{C}$ and the two points $q_1, q_2$ lying above the node. Indeed, one can recover $C$ as follows: take $\tilde{C}/(q_1 \sim q_2)$ to be the underlying topological space of $C$ and define the sheaf of regular functions on $C$ to be the subsheaf of $\O_{\tilde{C}}$ generated by all functions which vanish at $q_1$ and $q_2$. By contrast, if $q \in C$ is an elliptic $m$-fold point, then the isomorphism class of $C$ is not determined by the pointed normalization $(\tilde{C}, \qm)$.

In order to study the moduli of attaching data of the elliptic $m$-fold point, let us fix a curve $\tilde{C}$ with $m$ distinct smooth points, say $q_1, \ldots, q_m \in C$, and define the following two sets
\begin{align*}
\Moduli &:=\{ (C,q)\,\,| \text{ $(C,q)$ satisfies $(a)$ and $(b)$} \}/\simeq,\\
\Maps&:=\{\pi:(\tilde{C}, \qm) \rightarrow (C,q)\,\,| \text{ $\pi$ satisfies $(a)$ and $(c)$} \}/\simeq,
\end{align*}
where the conditions $(a)$, $(b)$, and $(c)$ refer to
\begin{itemize}
\item[$(a)$] $q \in C$ is an elliptic $m$-fold point,
\item[$(b)$] The normalization of $(C,q)$ is isomorphic to $(\tilde{C},\qm)$,
\item[$(c)$] $\pi$ is the normalization of $(C,q)$.
\end{itemize}
As usual, an isomorphism between two maps, say $\pi:(\tilde{C}, \qm) \rightarrow (C,q)$ and $\pi':(\tilde{C}, \qm) \rightarrow (C',q')$, consists of an isomorphism $i:(C,q) \simeq (C',q')$ such that the obvious diagram commutes. There is a surjection
\begin{align*}
\Maps &\rightarrow \Moduli,\\
\intertext{given by forgetting the map, and two maps have the same image in moduli iff they differ by an automorphism of $(\tilde{C},\qm)$.
Thus, we have }
\Moduli &\simeq \Maps / \Aut(\tilde{C},\qm).\\
\end{align*}

\begin{remark}
For simplicity, we will assume that every automorphism of $\tilde{C}$ which fixes the set $\qm$ actually fixes the points $q_i$ individually. This holds when $(\tilde{C}, \qm)$ consists of $m$ distinct non-isomorphic connected components, each containing one of the points $q_i$, and this is the only case we need.
\end{remark}

Now let us consider the problem of parametrizing these sets algebraically. Given $$\pi:(\tilde{C},\qm) \rightarrow (C,q)$$ satisfying $(a)$ and $(c)$, Lemma \cite[Lemma 2.2]{SmythEI} implies that we obtain a codimension-one subspace
$$
\pi^*(T_{q}^{\vee}) \subset \oplus_{i=1}^{m} T_{q_i}^{\vee}
$$
satisfying $\pi^*(T_{q}^{\vee}) \supsetneq T_{q_i}^{\vee}$ for each $i=1, \ldots, m$. Since $\O_{C}$ can be recovered as the sheaf generated by (arbitrary lifts of) a basis of $\pi^*T_{q}^{\vee}$,  together with all functions vanishing to order at least two along $q_1, \ldots, q_m$, this subspace determines the map up to isomorphism. Conversely, any codimension-one subspace $$V\subset \oplus_{i=1}^{m} T_{q_i}^{\vee}$$
with the property that
$V \supsetneq T_{q_i}^{\vee}$ for any $i=1, \ldots, m$,
gives rise to a map $\pi: \tilde{C} \rightarrow C$ simply by identifying the points $q_1, \ldots, q_m$, and declaring $\O_{C}$ to be the push forward of the subsheaf of $\O_{\tilde{C}}$ generated by (arbitrary lifts of) a basis of $V$, together with all functions vanishing to order at least two along $q_1, \ldots, q_m$. By \cite[Lemma 2.2]{SmythEI}, the singular point $\pi(q_1)=\ldots =\pi(q_m) \in C$ is an elliptic $m$-fold point. In sum, we have established

\begin{lemma} Let $\P:= \P(\oplus_{j=1}^{m} T_{q_i}^{\vee})$ denote the projective space of hyperplanes in $ \oplus_{j=1}^{m} T_{q_i}^{\vee}$, and let $H_{i} \subset \P$ be the coordinate hyperplane $H_{i}:= \P(\oplus_{ j \neq i} T_{q_i}^{\vee})$
Then we have a natural bijection
\begin{align*}
\emph{\Maps} &\leftrightarrow \P \backslash (H_1 \cup \ldots \cup H_m)\\
\pi& \rightarrow \pi^*(T_{q}^{\vee})
\end{align*}
\end{lemma}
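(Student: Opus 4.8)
This statement packages the discussion immediately preceding it, so the plan is to extract from that discussion a pair of mutually inverse maps and then identify the relevant loci. The assignment $\pi\mapsto\pi^*(T_q^\vee)$ is already built: to a normalization $\pi:(\tilde C,\qm)\to(C,q)$ satisfying $(a)$ and $(c)$ it attaches the codimension-one subspace $\pi^*(T_q^\vee)\subseteq\bigoplus_{i=1}^m T_{q_i}^\vee$, and \cite[Lemma 2.2]{SmythEI} guarantees that this subspace contains none of the coordinate lines $T_{q_i}^\vee$. First I would check that this is exactly the condition $[\pi^*(T_q^\vee)]\in\P\setminus(H_1\cup\cdots\cup H_m)$: by definition $H_i=\P(\bigoplus_{j\ne i}T_{q_j}^\vee)$ is embedded in $\P$ as the locus of hyperplanes of $\bigoplus_j T_{q_j}^\vee$ that contain the $i$-th coordinate line $T_{q_i}^\vee$, so a hyperplane avoids every $H_i$ precisely when it contains no coordinate line. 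Second, the assignment descends to isomorphism classes of maps: if $i:(C,q)\simeq(C',q')$ is an isomorphism of maps then $\pi'=i\circ\pi$ and $i^*:T_{q'}^\vee\to T_q^\vee$ is an isomorphism, so $\pi'^*(T_{q'}^\vee)=\pi^*(i^*T_{q'}^\vee)=\pi^*(T_q^\vee)$ as subspaces of $\bigoplus_i T_{q_i}^\vee$.

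For the inverse, to a hyperplane $V$ with $[V]\in\P\setminus(H_1\cup\cdots\cup H_m)$ I would associate the curve $C_V$ obtained from $\tilde C$ by identifying $q_1,\dots,q_m$ to a single point $q$, with $\O_{C_V}\subseteq\pi_*\O_{\tilde C}$ defined to be the subsheaf generated by arbitrary lifts of a basis of $V$ together with all functions vanishing to order at least $2$ along $q_1,\dots,q_m$, equipped with its normalization $\pi_V:\tilde C\to C_V$. One then checks: (i) $\O_{C_V}$ is independent of the chosen lifts, since two lifts of a basis element differ by a function vanishing to order at least $2$ along the $q_i$, which is already among the listed generators; (ii) by \cite[Lemma 2.2]{SmythEI} the point $q\in C_V$ is an elliptic $m$-fold point — this is where the nondegeneracy $[V]\notin H_1\cup\cdots\cup H_m$ is used — so $\pi_V\in\Maps$; and (iii) the two assignments invert one another. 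In one direction $\pi\mapsto\pi^*(T_q^\vee)\mapsto C_{\pi^*(T_q^\vee)}$ returns $\pi$, because, as already observed, $\O_C$ is the subsheaf of $\pi_*\O_{\tilde C}$ generated by lifts of a basis of $\pi^*(T_q^\vee)$ together with the functions vanishing to order at least $2$; in the other, $V\mapsto C_V\mapsto\pi_V^*(T_q^\vee)$ returns $V$, because the conductor of the elliptic $m$-fold point $q\in C_V$ is exactly the ideal of functions vanishing to order at least $2$ along the $q_i$, so that the image of the pullback $T_q^\vee\to\bigoplus_i T_{q_i}^\vee$ is the reduction of the maximal ideal of $\O_{C_V,q}$ modulo that conductor, which is $V$ by construction.

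The only point of genuine substance is the interface with \cite[Lemma 2.2]{SmythEI}: the argument depends on knowing exactly how an elliptic $m$-fold point is encoded by its pointed normalization — in particular that its conductor is the ``order $\geq 2$'' ideal, and that, given this, the remaining gluing datum is faithfully recorded by a single codimension-one subspace $V\subseteq\bigoplus_i T_{q_i}^\vee$ satisfying the nondegeneracy condition in that lemma. This is what makes the ``order at least $2$'' bookkeeping consistent in both directions, so that the two constructions are genuinely inverse rather than merely surjective onto one another. Granting it, the remainder is the linear-algebra identification of the admissible subspaces with the points of $\P\setminus(H_1\cup\cdots\cup H_m)$ indicated above, and the asserted bijection follows.
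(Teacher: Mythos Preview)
Your proposal is correct and follows essentially the same approach as the paper: the lemma is stated there with the phrase ``In sum, we have established,'' so its proof \emph{is} the preceding paragraph, which does exactly what you do---use \cite[Lemma 2.2]{SmythEI} to land $\pi^*(T_q^\vee)$ in $\P\setminus(H_1\cup\cdots\cup H_m)$, reconstruct $\O_C$ from a hyperplane $V$ via lifts of a basis together with the order-$\geq 2$ ideal, and invoke \cite[Lemma 2.2]{SmythEI} again to verify the resulting point is an elliptic $m$-fold point. Your write-up is slightly more explicit than the paper's (you spell out well-definedness on isomorphism classes and check the two constructions are mutually inverse), but the content and the key input are identical.
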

\begin{corollary}\label{C:mfoldmoduli}
If \emph{$\Aut(\tilde{C},\qm)=\{0\}$}, then we have a natural bijection
$$
\emph{\Moduli} \leftrightarrow \P \backslash (H_1 \cup \ldots \cup H_m)
$$
\end{corollary}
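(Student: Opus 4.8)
The plan is to deduce Corollary \ref{C:mfoldmoduli} directly from the preceding Lemma together with the isomorphism $\Moduli \simeq \Maps/\Aut(\tilde{C},\qm)$ established just above. Since we are assuming $\Aut(\tilde{C},\qm)$ is trivial, the quotient is trivial, so the bijection $\Maps \leftrightarrow \P \backslash (H_1 \cup \ldots \cup H_m)$ of the Lemma descends to a bijection $\Moduli \leftrightarrow \P \backslash (H_1 \cup \ldots \cup H_m)$. That is essentially the whole argument.

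In more detail, first I would recall that the surjection $\Maps \to \Moduli$, obtained by forgetting the normalization map $\pi$, has the property that two maps $\pi:(\tilde{C},\qm)\to(C,q)$ and $\pi':(\tilde{C},\qm)\to(C',q')$ have the same image in $\Moduli$ if and only if there is an automorphism $\phi\in\Aut(\tilde{C},\qm)$ with $\pi' = \pi\circ\phi$ (after identifying $(C,q)$ with $(C',q')$ via the induced isomorphism). This is exactly the statement $\Moduli \simeq \Maps/\Aut(\tilde{C},\qm)$ recorded before the Remark. When $\Aut(\tilde{C},\qm)=\{0\}$ (i.e. the only automorphism is the identity), this says the forgetting map $\Maps \to \Moduli$ is injective, hence bijective. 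Composing with the bijection $\Maps \leftrightarrow \P\backslash(H_1\cup\ldots\cup H_m)$ from the Lemma gives the claimed bijection $\Moduli \leftrightarrow \P\backslash(H_1\cup\ldots\cup H_m)$.

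There is essentially no obstacle here: the content was already packaged into the Lemma and the discussion preceding it, and the Corollary is the specialization to the case of trivial automorphism group. If one wanted to be scrupulous, the only point worth a sentence is that $\Aut(\tilde{C},\qm)$ really acts on $\Maps$ by precomposition $\pi \mapsto \pi\circ\phi$ and that the orbits of this action are exactly the fibers of $\Maps \to \Moduli$ — but both facts are immediate from the definition of isomorphism of maps (an isomorphism of $\pi$ and $\pi'$ over the fixed $(\tilde{C},\qm)$ unwinds to such a $\phi$), and under the hypothesis the action is trivial so the subtlety evaporates.

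\begin{proof}
By the discussion preceding the Remark, the forgetful surjection $\Maps \to \Moduli$ identifies $\Moduli$ with the quotient $\Maps/\Aut(\tilde{C},\qm)$, where $\Aut(\tilde{C},\qm)$ acts on $\Maps$ by precomposition, $\pi \mapsto \pi\circ\phi$: indeed two normalization maps $\pi$ and $\pi'$ with target isomorphic elliptic $m$-fold pointed curves differ by an element of $\Aut(\tilde{C},\qm)$. Under the hypothesis $\Aut(\tilde{C},\qm)=\{0\}$, this action is trivial, so $\Maps \to \Moduli$ is a bijection. Composing with the bijection $\Maps \leftrightarrow \P\backslash(H_1\cup\ldots\cup H_m)$ of the preceding Lemma yields the desired bijection $\Moduli \leftrightarrow \P\backslash(H_1\cup\ldots\cup H_m)$.
\end{proof}
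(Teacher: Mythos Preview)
Your proposal is correct and matches the paper's reasoning exactly: the paper states this as an immediate corollary with no separate proof, since it follows at once from the Lemma's bijection $\Maps \leftrightarrow \P \backslash (H_1 \cup \ldots \cup H_m)$ together with the identification $\Moduli \simeq \Maps/\Aut(\tilde{C},\qm)$ established just beforehand.
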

In the following lemma, we extend this description to the case when $(\tilde{C},\qm)$ has automorphisms.
\begin{lemma}\label{L:mfoldmoduli} Suppose that the image of the natural map
$$
\emph{\Aut}(\tilde{C},\qm) \rightarrow \oplus_{i=1}^{m}\emph{\Aut}(T_{q_i}^{\vee})
$$
is precisely
$$
\oplus_{i \in S}\emph{\Aut}(T_{q_i}^{\vee}),
$$
for some proper subset $S \subset \{1, \ldots, m\}$. Let $\P, H_1, \ldots, H_m$ be defined as before and set
$$H_{S}:=\cap_{i \in S}H_i=\P(\oplus_{i \notin S} T_{q_i}^{\vee}).$$
Then we have a natural bijection
$$
\emph{\Moduli} \leftrightarrow H_{S} \backslash \cup _{i \notin S} (H_i \cap H_{S}).
$$

\end{lemma}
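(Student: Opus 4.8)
The plan is to reduce the statement to the torus‑quotient description $\Moduli \simeq \Maps/\Aut(\tilde C,\qm)$ already established in the text, combined with the bijection $\Maps \leftrightarrow \P\setminus(H_1\cup\cdots\cup H_m)$, $\pi\mapsto\pi^*(T_q^\vee)$, of the preceding lemma, and then to carry out the resulting quotient computation by an explicit linear projection. Write $W:=\oplus_{j=1}^m T_{q_j}^\vee$ and $S^c:=\{1,\dots,m\}\setminus S$.

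First I would transport the $\Aut(\tilde C,\qm)$‑action on $\Maps$ through the bijection $\pi\mapsto\pi^*(T_q^\vee)$. For $\phi\in\Aut(\tilde C,\qm)$ one has $(\pi\circ\phi)^*(T_q^\vee)=\phi^*\bigl(\pi^*(T_q^\vee)\bigr)$, and since $\phi$ fixes each $q_i$ individually (the standing assumption of the Remark), the operator $\phi^*$ on $W=\oplus_i T_{q_i}^\vee$ is exactly the coordinatewise action of the image of $\phi$ under $\Aut(\tilde C,\qm)\to\oplus_i\Aut(T_{q_i}^\vee)$. Hence the $\Aut(\tilde C,\qm)$‑action on $\Maps\cong\P\setminus(H_1\cup\cdots\cup H_m)$ factors through that image, which by hypothesis is $\oplus_{i\in S}\Aut(T_{q_i}^\vee)\cong(k^*)^S$, acting on $\P=\P(W)$ by independently rescaling the coordinates indexed by $S$ and fixing those indexed by $S^c$. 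So the task becomes to compute $\bigl(\P\setminus\bigcup_i H_i\bigr)\big/(k^*)^S$.

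For the computation I would use the linear projection $\rho\colon \P=\P(W)\dashrightarrow\P(\oplus_{i\notin S}T_{q_i}^\vee)=H_S$ away from $\P(\oplus_{i\in S}T_{q_i}^\vee)$; in coordinates $[a_1:\cdots:a_m]$ with $H_i=\{a_i=0\}$ it is $[a_1:\cdots:a_m]\mapsto[a_i]_{i\notin S}$. On $\P\setminus\bigcup_i H_i$ all coordinates are nonzero, so $\rho$ is defined there, is manifestly $(k^*)^S$‑invariant, surjects onto $H_S\setminus\bigcup_{i\notin S}(H_i\cap H_S)$, and its fibers are precisely the $(k^*)^S$‑orbits: two such points have the same image iff, after a projective rescaling, they agree in the $S^c$‑coordinates, and then differ in the $S$‑coordinates by an element of $(k^*)^S$. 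Intrinsically, $\rho$ carries the hyperplane $V=\pi^*(T_q^\vee)$ to $V\cap\bigl(\oplus_{i\notin S}T_{q_i}^\vee\bigr)$, which is genuinely a hyperplane of $\oplus_{i\notin S}T_{q_i}^\vee$ meeting none of the coordinate lines $T_{q_i}^\vee$ ($i\notin S$) — exactly because $V$ meets none of the $m$ coordinate lines $T_{q_j}^\vee$ and $S$ is proper. Passing to the quotient then gives the asserted bijection
$$
\Moduli\;\longleftrightarrow\;H_S\setminus\bigcup_{i\notin S}(H_i\cap H_S),\qquad [\pi]\longmapsto \pi^*(T_q^\vee)\cap\Bigl(\bigoplus_{i\notin S}T_{q_i}^\vee\Bigr).
$$

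The only genuine subtlety — and the step I would be most careful about — is the equivariance claim in the second paragraph: namely that the bijection of the preceding lemma really intertwines the precomposition action of $\Aut(\tilde C,\qm)$ on $\Maps$ with the linear‑algebra action on $\P$, so that the quotient computation is legitimate. Everything after that is elementary projective geometry. It is worth remarking that the hypothesis that the image be of the special ``coordinate subtorus'' shape $\oplus_{i\in S}\Aut(T_{q_i}^\vee)$ is exactly what guarantees the quotient is again of the form ``projective space minus coordinate hyperplanes,'' and this is the only case needed in the sequel.
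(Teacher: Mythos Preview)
Your proof is correct and follows essentially the same approach as the paper's: both reduce to the identification $\Moduli\simeq\Maps/\Aut(\tilde C,\qm)$, transport the action through the bijection $\pi\mapsto\pi^*(T_q^\vee)$ so that it becomes the coordinate-subtorus action of $(k^*)^S$, and then identify the quotient with $H_S\setminus\bigcup_{i\notin S}(H_i\cap H_S)$ via the projection $V\mapsto V\cap\bigl(\oplus_{i\notin S}T_{q_i}^\vee\bigr)$. The only stylistic difference is that the paper carries out the last step by writing down explicit bases and matrices for the hyperplanes and the torus action, whereas you phrase the same computation intrinsically via the linear projection $\rho$; the content is identical.
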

\begin{proof}
Consider the map
\begin{align*}
\Maps \rightarrow \P \backslash (H_1 \cup \ldots \cup H_m) &\rightarrow H_{S} \backslash \cup _{i \notin S} (H_i \cap H_{S}),
\end{align*}
defined by
$$
\pi \rightarrow \pi^*(T_{q}^{\vee})\rightarrow \pi^*(T_{q}^{\vee}) \cap \oplus_{i \notin S} T_{q_i}^{\vee}.
$$
Two distinct maps differ by an element of $\Aut(\tilde{C},\qm)$ iff the corresponding subspaces $\pi^*(T_{q}^{\vee}) \subset \oplus_{i=1}^{m} T_{q_i}^{\vee}$ differ by an element of $\oplus_{i \in S}\Aut(T_{q_i}^{\vee})$. Since
$$
\Moduli \simeq \Maps / \Aut(\tilde{C},\qm),
$$
it suffices to show that two subspaces $\pi^*(T_{q}^{\vee}) \subset \oplus_{i=1}^{m} T_{q_i}^{\vee}$ differ by an element of $\oplus_{i \in S}\Aut(T_{q_i}^{\vee})$ iff they have the same projection $\pi^*(T_{q}^{\vee}) \cap \oplus_{i \notin S} T_{q_i}^{\vee}.$

To see this explicitly, order the branches so that $S=\{1, \ldots, k\}$, choose uniformizers $t_1, \ldots, t_m$ on the normalization, and pick coordinates for $\P \backslash (H_1 \cup \ldots \cup H_m)$ so that the point $(c_1, \ldots, c_m) \in (k^*)^m$ corresponds to the subspace spanned by
\[
\left(
\begin{matrix}
t_1& 0& \hdots  & 0 & c_1t_m\\
0&t_2& \ddots & \vdots & c_2 t_m\\
\vdots& \ddots& \ddots& 0& \vdots  \\
0 & \hdots &0 & t_{m-1} & c_{m-1}t_m
\end{matrix}
\right)
\]
The projection of this subspace to $\oplus_{i \notin S} T_{q_i}^{\vee}$ is simply
\[
\left(
\begin{matrix}
t_{k+1}& 0& \hdots  & 0 & c_{k+1}t_{k+1}\\
0&t_{k+2}& \ddots & \vdots & c_{k+2} t_{k+2}\\
\vdots& \ddots& \ddots& 0& \vdots  \\
0 & \hdots &0 & t_{m-1} & c_{m-1}t_m
\end{matrix}
\right)
\]
In these coordinates, an element $(\lambda_1, \ldots, \lambda_k) \in \oplus_{i \in S}\Aut(T_{q_i}^{\vee}) = (k^*)^{|S|}$ acts by
$$
(\lambda_1, \ldots, \lambda_k) * (c_1, \ldots, c_{m-1})=(\lambda_1^{-1}c_1, \ldots, \lambda_k^{-1}c_k, c_{k+1}, \ldots, c_{m-1}),
$$
which shows that two subspaces are in the same orbit iff they have the same projection to $\oplus_{i \notin S} T_{q_i}^{\vee}$.
\end{proof}

\begin{remark}
This entire discussion applies without change to the case of pointed curves, i.e. if we are given an $n$-pointed curve $(C,\pn)$ and $m$ smooth points $\qm \in C$ which are distinct from the marked points, we may define
\begin{align*}
\Moduli &:=\{ (C,q,\pn)\,\,| \text{ $(C,q,\pn)$ satisfies $(a)$ and $(b)$} \}/\simeq,\\
\Maps&:=\{\pi:(\tilde{C},\qm,\pn) \rightarrow (C,q,\pn)\,\,| \text{ $\pi$ satisfies $(a)$ and $(c)$} \}/\simeq,
\end{align*}
where the conditions $(a)$, $(b)$, and $(c)$ refer to
\begin{itemize}
\item[$(a)$] $p \in C$ is an elliptic $m$-fold point,
\item[$(b)$] The normalization of $(C,q, \pn)$ is isomorphic to $(\tilde{C},\qm,\pn)$,
\item[$(c)$] $\pi$ is the normalization of $(C,q,\pn)$.
\end{itemize}
Precisely the same arguments give
$$
\Moduli \simeq \Maps / \Aut(\tilde{C},\qm, \pn),
$$
and the statement and proof of Lemma \ref{L:mfoldmoduli} hold in this context, with $\Aut(\tilde{C},\qm)$ replaced by $\Aut(\tilde{C},\qm, \pn).$
\end{remark}
\subsection{Construction of universal elliptic $m$-fold pointed families}
If $(\tilde{C},\qm)$ is a fixed curve with $\Aut(\tilde{C}, \qm)=0,$ Corollary \ref{C:mfoldmoduli} implies
$$
\Moduli \simeq \P \backslash (H_1 \cup \ldots \cup H_m) \simeq (k^*)^{m-1}
$$
In this section, we construct a modular compactification $(k^*)^{m-1} \subset \P^{m-1}$ which is functorial with respect to the normalization $(\tilde{C},\qm)$. The key idea is to allow the normalization $(\tilde{C},\qm)$ to sprout a semistable $\P^{1}$ at $q_i$ as the modulus of attaching data approaches the hyperplane $H_i$ (see Figure \ref{F:compacttriplepoint}).

\begin{definition}[Sprouting]
Let $(\tilde{C},\qm)$ be an $m$-pointed curve, and $S \subset [m]$ a proper subset. We say that $(\tilde{C}',\qmp)$ is obtained from $(\tilde{C},\qm)$ by \emph{sprouting at $\{q_i\}_{i \in S}$} if 
$$
\tilde{C}' \simeq \tilde{C} \cup E_1 \cup \ldots \cup E_{|S|},
$$
where
\begin{enumerate}
\item $E_i$ is a smooth rational curve, nodally attached to $\tilde{C}$  at $q_i$,
\item For $i \in S$, $q_i'$ is an arbitrary point of $E_i - \{q_i\}$, 
\item For $i \notin S$, $q_i=q_i'$.
\end{enumerate}
Note that the isomorphism class of $(\tilde{C}',\qmp)$ is uniquely determined by $(\tilde{C},\qm)$ and the subset $S \subset [m]$.
\end{definition}
\begin{figure}
\scalebox{.40}{\includegraphics{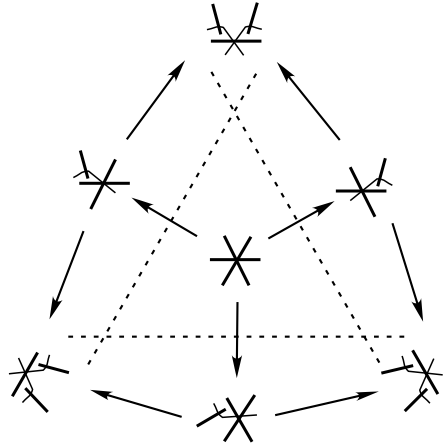}}
\caption{Compactification of the moduli of attaching data of the planar triple point. Over the three coordinate hyperplanes in $\P(T_{q_1}^{\vee} \oplus T_{q_2}^{\vee} \oplus T_{q_3}^{\vee})$, the normalization sprouts a $\P^1$ at the corresponding branch.}\label{F:compacttriplepoint}
\end{figure}

If $\Aut(\tilde{C},\qm)=0,$ and $(\tilde{C}',\qmp)$ is obtained from $(\tilde{C},\qm)$ by sprouting at $S$, then
$$
\Image \left( \Aut(\tilde{C}',\qmp) \rightarrow \oplus_{i=1}^{m}\Aut(T_{q'_i}^{\vee}) \right)=\oplus_{i \in S}\Aut(T_{q'_i}^{\vee}) .
$$
Thus, Lemma \ref{L:mfoldmoduli} implies that the attaching moduli for $(\tilde{C}',\qmp)$ is given by $H_{S} \backslash \cup _{i \notin S} (H_i \cap H_{S})$. As $S$ ranges over proper subsets of $[m]$, the locally closed subvarieties $H_{S} \backslash \cup _{i \notin S} (H_i \cap H_{S})$ give a stratification of $\P$. This suggests the construction of a flat family over $\P$ whose fibers
range over all isomorphism classes of elliptic $m$-fold pointed curves with pointed normalization obtained from $(\tilde{C},\qm)$ by sprouting along a proper subset of $\qm$. In fact, we can make this construction relative to a family of varying normalizations.

To set notation,  let $(f:\C \rightarrow T, \{\tau_i\}_{i=1}^{m})$ be a family of curves with $\taum$ mutually disjoint sections in the smooth locus of $f$. Let $\psi_{i}:=\tau_i^*\O_{\C}(-\tau_i)$ be the universal cotangent bundle along $\tau_i$, and consider the projective bundle
$$p: \P:=\P(\oplus_{i=1}^{m}\psi_i) \rightarrow T.$$
We will abuse notation by letting $f$ and $\tau_i$ continue to denote the pull-backs $p^*f$ and $p^*\tau_i$.
For any subset $S \subset [m]$, let $H_{S}$ denote the $\P^{m-|S|-1}$-subbundle of $\P$ corresponding to the quotient
$$
\oplus_{i=1}^{m}\psi_i  \rightarrow \oplus_{i \notin \{S\}}\psi_i \rightarrow 0,
$$
and set
$$
U_{S}:=H_{S} \backslash \cup_{i \notin \{S\}}(H_{i} \cap H_{S}).
$$
Note that, as $S$ ranges over non-empty subsets of $[m]$, the locally closed subschemes $U_{S}$ give a stratification of $\P$.

\begin{proposition}[Construction of universal elliptic $m$-fold pointed families I]\label{P:mfoldfamiliesI}
With notation as above, there exists a diagram
\[
\xymatrix{ 
&\tilde{\D} \ar[rd]^{\pi} \ar[ld]_{\phi} \ar[dd]^{\tilde{g}}&\\
\C \times_T \P(\oplus_{i=1}^{m}\psi_i)  \ar[rd]^{f}&& \D \ar[ld]_{g}\\
&\P(\oplus_{i=1}^{m}\psi_i)  \ar@/^1pc/[lu]^{\taum}  \ar@/^1pc/[uu]^{\{\tilde{\tau}_i\}_{i=1}^{m}} \ar@/_1pc/[ru]_{\tau}  &
}
\]
satisfying
\begin{itemize}
\item[(1)] $g, \tilde{g}$ are flat of relative dimension one.
\item[(2)] $\phi$ is the blow-up of $\C \times_T \P$ along the smooth codimension-two locus $\cup_{i=1}^{m}(\tau_i(\P) \cap f^{-1}(H_i))$, and $\tilde{\tau_i}$ is the strict transform of $\tau_i$. 
\item[(3)] $\pi$ is an isomorphism away from $\cup_{i=1}^{m}\tilde{\tau}_i$ and $\pi(\tilde{\tau}_1)=\ldots=\pi(\tilde{\tau}_m)=\tau$, i.e. $\pi$ is the normalization of $\D$ along $\tau$.
\item[(4)] For each geometric point $z \in  \P,$ $\tau(z) \in D_z$ is an elliptic $m$-fold point.\\
\end{itemize}
Furthermore, we can describe the restriction of this diagram to a geometric point $z \in \P$ as follows: Let $S \subset [m]$ be the unique proper subset (possibly empty) such that $z \in U_{S}$. Then
\begin{itemize}
\item[(5)] $\phi_{z}:(\tilde{D}_{z}, \{\tilde{\tau}_i(z)\}_{i=1}^{m}) \rightarrow (C_z, \{\tau_i(z)\}_{i=1}^{m})$, is the sprouting of $C_{z}$ along $\{\tau_{i}(z)\}_{i \in S}$. In particular, there is a canonical identification
$$
\oplus_{i \notin S}T_{C_{z},\tau_i(z)}^{\vee} = \oplus_{i \notin S}T_{\tilde{D}_z,\tilde{\tau}_i(z)}^{\vee}. 
$$
\item[(6)] $\pi_{z}:(\tilde{D}_{z}, \{\tilde{\tau}_i(z)\}_{i=1}^{m}) \rightarrow (D_z, \tau(z))$ is the normalization of $D_{z}$ at the elliptic $m$-fold point $\tau(z)$. The codimension-one subspace $\pi^*(T_{D_z,\tau(z)}^{\vee}) \subset \oplus_{i=1}^{m}T_{\tilde{D}_z,\tilde{\tau}_i(z)}^{\vee}$ satisfies
$$
\pi^*(T_{D_{z},\tau(z)}^{\vee}) \cap \oplus_{i \notin S} T_{\tilde{D}_z,\tilde{\tau}_i(z)}^{\vee} = [z] \cap \oplus_{i \notin S} T_{C_{z},\tau_i(z)}^{\vee}, 
$$
where $[z] \subset \oplus_{i=1}^{m}T_{C_{z},\tau_i(z)}^{\vee}$ is the codimension-one subspace  corresponding to $z \in \P$, and we identify $\oplus_{i \notin S}T_{C_{z},\tau_i(z)}^{\vee} = \oplus_{i \notin S}T_{\tilde{D}_z,\tilde{\tau}_i(z)}^{\vee}$ as in (5).
\end{itemize}
\end{proposition}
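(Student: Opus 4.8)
The plan is to build the family $\tilde{\D}$ first as an explicit blow-up, then contract to produce $\D$, verifying the six claims essentially stratum-by-stratum over the base $\P = \P(\oplus_i \psi_i)$. I would begin by constructing $\phi$: form $\C \times_T \P$, and observe that for each $i$ the section $\tau_i(\P)$ and the divisor $f^{-1}(H_i)$ meet along a smooth codimension-two locus (smoothness follows since $\tau_i$ lands in the smooth locus of $f$ and $H_i \subset \P$ is a smooth subbundle meeting $\tau_i(\P)$ transversally). These $m$ loci are pairwise disjoint, since the $H_i$ have empty common intersection only pairwise-by-pairwise is not quite it — rather, $\tau_i(\P) \cap f^{-1}(H_i)$ and $\tau_j(\P) \cap f^{-1}(H_j)$ are disjoint because $\tau_i$ and $\tau_j$ are disjoint sections — so I may blow them all up simultaneously to get $\phi: \tilde{\D} \to \C\times_T\P$. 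Flatness of $\tilde g$ over $\P$ (claim (1) for $\tilde g$) follows because blowing up a locus that is flat over the base and contained in the smooth locus of the fibers preserves flatness, and the fibers are visibly curves; the strict transform $\tilde\tau_i$ is then a section of $\tilde g$, and claim (2) holds by construction.

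Next I would identify the fibers of $\phi$: over a point $z \in U_S$, the center $\cup_i(\tau_i(\P)\cap f^{-1}(H_i))$ meets $C_z$ exactly at $\{\tau_i(z)\}_{i\in S}$ (since $z \in H_i$ iff $i \in S$, using $U_S \subset H_S$ and $z\notin H_i$ for $i\notin S$), so $\phi_z$ is the blow-up of $C_z$ at those points, i.e. it sprouts a $\P^1$ at each $q_i$, $i\in S$. This is exactly the sprouting of Definition (Sprouting); the marked point $q_i'$ on the new $\P^1$ is the point $\tilde\tau_i(z)$, and for $i\notin S$ we have $\tilde\tau_i(z)=\tau_i(z)$. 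That gives claim (5), including the canonical identification $\oplus_{i\notin S}T^\vee_{C_z,\tau_i(z)}=\oplus_{i\notin S}T^\vee_{\tilde D_z,\tilde\tau_i(z)}$, which is just the statement that $\phi$ is an isomorphism near the branches not being sprouted.

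Now for the contraction $\pi$: I want a morphism $\tilde\D \to \D$ over $\P$ that glues the $m$ sections $\tilde\tau_i$ to a single section $\tau$ with elliptic $m$-fold point fibers. The right tool is the relative version of the normalization-reversal construction — I would construct $\D$ as the ringed space with underlying topological space $\tilde\D/(\tilde\tau_1\sim\cdots\sim\tilde\tau_m)$ and structure sheaf the subsheaf of $\pi_*\O_{\tilde\D}$ of functions whose values along the $\tilde\tau_i$ determine a point of the tautological hyperplane, i.e. cut out by the tautological subbundle $\O_\P(-1)\hookrightarrow\oplus_i\psi_i=\oplus_i\tilde\tau_i^*\O_{\tilde\D}(-\tilde\tau_i)$ together with all functions vanishing to order $\ge 2$ along the $\tilde\tau_i$. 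Concretely, on the conductor square one defines $g_*\O_\D$ by the fiber product $\pi_*\O_{\tilde\D}\times_{\oplus_i\O_{\tilde\tau_i}}\O_\tau$ where $\O_\tau\hookrightarrow\oplus_i\O_{\tilde\tau_i}$ is the rank-one quotient dual to the tautological inclusion. That $\D$ is flat over $\P$ (claim (1)) and that $\tau(z)$ is an elliptic $m$-fold point for every $z$ (claim (4)) then follow fiberwise from \cite[Lemma 2.2]{SmythEI}: on the fiber over $z\in U_S$, the hyperplane $[z]\subset\oplus_{i=1}^m T^\vee_{C_z,\tau_i(z)}$ determines, via its intersection with $\oplus_{i\notin S}T^\vee$, exactly the attaching datum $\pi_z^*(T^\vee_{D_z,\tau(z)})$ prescribed in claim (6) — this matching is forced because the tautological subbundle of $\P(\oplus\psi_i)$ restricts on $U_S$ to the subspace $[z]$, and the sprouting identification of (5) carries it to the formula stated. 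Claim (3) (that $\pi$ is an isomorphism off $\cup\tilde\tau_i$ and is the normalization along $\tau$) is built into the conductor-square definition.

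The main obstacle I expect is \textbf{flatness of $\D$ over $\P$}, i.e. checking that the fiberwise gluing assembles into a flat family. The subtlety is that as $z$ crosses the strata $U_S$ the "amount" of curve being glued changes — over the open stratum $U_\emptyset$ nothing sprouts and $\pi_z$ glues $m$ points of $C_z$, while over deeper strata $\pi_z$ glues the tips of sprouted $\P^1$'s — so one must verify that $g_*\O_\D$ is locally free of the expected rank along the jumps, equivalently that the conductor-square construction commutes with base change to each point of $\P$. I would handle this by working étale-locally on $\P$ where one can choose uniformizers $t_1,\dots,t_m$ along the sections and coordinates on $\P$ as in the proof of Lemma \ref{L:mfoldmoduli}, writing the tautological subbundle explicitly, and then checking flatness via the local criterion (the relevant Tor vanishing reduces to a computation with the explicit equations of the elliptic $m$-fold point from \cite[Definition 2.1]{SmythEI}), or alternatively by invoking \cite[Lemma 2.12]{SmythEI} to realize $\pi$ as contraction by a relatively base-point-free power of $\omega_{\tilde\D/\P}(\tilde D)$, which produces a flat family automatically. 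A secondary technical point is verifying that $\P(\oplus_i\psi_i)$, hence $\D$, is constructed functorially in the input family $(\C\to T,\{\tau_i\})$, but this is immediate from the base-change compatibility of projective bundles and of blow-ups along loci flat over the base.
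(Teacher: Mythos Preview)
Your overall strategy matches the paper's: build $\tilde\D$ as the stated blow-up, then glue the sections $\tilde\tau_i$ via a sheaf-theoretic construction to produce $\D$. Your treatment of $\phi$ and of claim (5) is fine. The gap is in your construction of $\pi$, specifically in the line
\[
\O_\P(-1)\hookrightarrow\oplus_i\psi_i=\oplus_i\tilde\tau_i^*\O_{\tilde\D}(-\tilde\tau_i).
\]
This equality is false. Set $\tilde\psi_i:=\tilde\tau_i^*\O_{\tilde\D}(-\tilde\tau_i)$. Since $\phi^*\tau_i=\tilde\tau_i+E_i$ (with $E_i$ the exceptional divisor over $\tau_i\cap f^{-1}(H_i)$), one has $\tilde\psi_i=(p^*\psi_i)(H_i)$, not $p^*\psi_i$. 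This twist is the entire point of the blow-up: the tautological map $e_i:p^*\psi_i\to\O_\P(1)$ vanishes to order one along $H_i$, so over $z\in H_i$ the tautological hyperplane $[z]\subset\oplus_j T^\vee_{C_z,\tau_j(z)}$ \emph{contains} the coordinate line $T^\vee_{C_z,\tau_i(z)}$, and gluing with that datum would not give an elliptic $m$-fold point. After the blow-up, however, $e_i$ induces an \emph{isomorphism} $\tilde e_i:\tilde\psi_i\simeq\O_\P(1)$, and summing these gives a surjection $\oplus_i\tilde\psi_i\to\O_\P(1)$ whose kernel $\E$ has the crucial property that $\E_z\subset\oplus_i T^\vee_{\tilde D_z,\tilde\tau_i(z)}$ contains no coordinate line, for every $z$. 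This is what makes claim (4) go through via \cite[Lemma 2.2]{SmythEI}, and it is also what makes claim (6) come out: for $i\notin S$ the inclusion $p^*\psi_i\subset\tilde\psi_i$ is an isomorphism near $z$, giving the commutative square that identifies $\E_z\cap\oplus_{i\notin S}\tilde\psi_i$ with $[z]\cap\oplus_{i\notin S}\psi_i$.

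Once you have $\E$, the paper's construction of $\D$ is close to your conductor-square idea but phrased more concretely: working affine-locally near the sections, take the preimage $\F\subset\tilde g_*\O_{\tilde\D}(-\sum\tilde\tau_i)$ of $\E\subset\oplus_i\tilde\psi_i$, let $\G\subset\tilde g_*\O_{\tilde\D}$ be the $\O_\P$-subalgebra it generates, and set $\D=\underline{\Spec}_{\O_\P}\G$. Your worry about flatness of $g$ is legitimate but secondary to the issue above; with this explicit description of $\G$ (and the non-degeneracy of $\E$ on every fiber) the fiberwise description is uniform and flatness follows. Your alternative suggestion of producing $\pi$ as a contraction via a line bundle as in \cite[Lemma 2.12]{SmythEI} would give a flat family, but would not by itself pin down the attaching datum needed for (6); the explicit $\E$-construction does both at once.
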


\begin{proof}
To construct the diagram, first note that for each $i=1, \ldots, m$, the codimension-two subvariety
$$f^{-1}(H_i) \cap \tau_i(\P) \subset \C \times_{T} \P$$
is contained in the smooth locus of $f$. Furthermore, these subvarieties are mutually disjoint. Let
$$
\phi: \tilde{\D} \rightarrow \C \times_{T} \P
$$
be the blow-up along the union of these subvarieties, let $E_1, \ldots, E_m$ denote the exceptional divisors of the blow-up, and let $\tilde{\tau}_i$ denote the strict transform of $\tau_i$. The flatness of $\tilde{g}: \tilde{\D} \rightarrow \P$ is a standard local calculation. Note that if $z \in U_{S}$, then the fiber over $z$ intersects the center of the blow-up transversely at $\tau_i(z)$ for $i \in S$, so property (5) is clear.

It remains to construct the map $\pi.$ Begin by considering the tautological sequence on $\P$:
$$\oplus_{i=1}^{m}p^*\psi_i \rightarrow \O_{\P}(1) \rightarrow 0,$$
and let $e_j \in \hom(p^*\psi_j,\O_{\P}(1))$ be the section obtained by the composition
$$e_j: p^*\psi_j \hookrightarrow \oplus_{i=1}^{m}p^*\psi_i \rightarrow \O_{\P}(1).$$
Note that $e_j$ vanishes to order one along $H_j$ and is non-vanishing elsewhere. Set 
$$\tilde{\psi}_i:=\tilde{\tau}_i^*\O_{\C}(-\tilde{\tau}_i),$$
and note that $\phi^* \tau_i = \tilde{\tau}_i+E_i$ implies
$$\tilde{\psi_i} =(p^*\psi_i)(H_i).$$
Since $e_i: p^*\psi_i \rightarrow \O_{\P}(1)$ vanishes to order one along $H_i$ and is non-vanishing elsewhere, $e_i$ induces an isomorphism
$$
\tilde{e}_i: \tilde{\psi}_i \simeq \O_{\P}(1).
$$
Taking the direct sum of these maps, we obtain an exact sequence
$$
0 \rightarrow \E \rightarrow \oplus_{i=1}^{m}\tilde{\psi}_i \rightarrow \O_{\P}(1) \rightarrow 0,
$$
with the property that, for each point $z \in \P$, the induced subspace
$$\E_{z} \subset \oplus_{i=1}^{m} T^{\vee}_{\tilde{\tau}_i(z)}.$$
does not contain any of the lines $T_{\tilde{\tau}_i(z)}^{\vee}$.

It is sufficient to define $\phi$ locally around $\tilde{\tau}_1, \ldots, \tilde{\tau}_m$,  so we may assume that $\tilde{g}$ is smooth and affine, i.e. we may assume
$$
\tilde{\D}:=\underline{\Spec}_{\O_{\P}} \tilde{g}_*\O_{\tilde{\D}}
$$
We specify a sheaf of $\O_{\P}$-subalgebras of $\tilde{g}_*\O_{\tilde{\D}}$ as follows: We consider the exact sequence on $\P$
$$
0 \rightarrow \tilde{g}_*\O_{\tilde{\D}}(-2\tilde{\tau}_1-\ldots-2\tilde{\tau}_m) \rightarrow \tilde{g}_*\O_{\tilde{\D}}(-\tilde{\tau}_1-\ldots-\tilde{\tau}_m) \rightarrow \oplus_{i=1}^{m}\tilde{\psi}_i \rightarrow 0,
$$
and let $\F \subset \tilde{g}_*\O_{\tilde{\D}}(-\tilde{\tau}_1-\ldots-\tilde{\tau}_m)$ be the inverse image of $\E \subset  \oplus_{i=1}^{m}\tilde{\psi}_i.$ Then we define $\G \subset \tilde{g}_*\O_{\tilde{\D}}$ to be the sheaf of $\O_{\P}$-subalgebras generated by sections of $\F$. Setting $\D:=\Spec_{\O_{\P}}\G$, we let $\pi$ be the morphism $\tilde{\D} \rightarrow \D$ associated to the inclusion $\G \subset \tilde{g}_*\O_{\tilde{\D}}$. 

Conclusion (3) is clear by construction, since any section of $\G$ vanishes along one section $\tau_i$ if and only if it vanishes along all of them. For (4), note that for any geometric point $z \in \P$, 
\begin{align*}
\pi_{z}^*\O_{D_z}(-2\tau(z))&=\O_{\tilde{D}_{z}}(-2\tilde{\tau}_1(z)-\ldots-2\tilde{\tau}_m(z)),\\
\pi_z^*(T_{\tau(z)}^{\vee})&=\E_{z} \subset \oplus_{i=1}^{m} T^{\vee}_{\tilde{\tau}_i(z)}.
\end{align*}
Since $\E_{z}$ does not contain any of the lines $T_{\tilde{\tau}_i(z)}^{\vee}$, $\tau(z) \in C$ is an elliptic $m$-fold point by \cite[Lemma 2.2]{SmythEI}. Finally, for (6), note that if $z \in U_{S}$, then the inclusion $p^*\psi_i  \subset \tilde{\psi}_i$ is an isomorphism in a neighborhood of $z$, for all $i \notin S$.  Thus, we have a commutative diagram
\[
\xymatrix{
\oplus_{i \notin S} \tilde{\psi}_i \ar[r]&\O_{\P}(1) \ar[r]& 0\\
\oplus_{i \notin S} p^*\psi_i \ar[u]^{\simeq} \ar[r]& \O_{\P}(1) \ar[r] \ar[u]^{\simeq} &0.
}
\]
The bottom arrow is induced by the tautological sequence, while the kernel of the top arrow is $\E \cap \oplus_{i \notin S} \tilde{\psi}_i$. It follows that
$$
\E_{z} \cap \oplus_{i \notin S}\oplus_{i=1}^{m} T^{\vee}_{\tilde{D}_{z}, \tilde{\tau}_i(z)}=[z] \cap \oplus_{i=1}^{m} T^{\vee}_{C_z,\tau_i(z)}.
$$
\end{proof}

\begin{corollary}\label{C:mfoldfamiliesI}
Suppose $(f: \C \rightarrow T, \taum, \sigman)$ is a family of $(n+m)$-pointed curves satisfying
\begin{enumerate}
\item The geometric fibers of $f$ have no automorphisms (as pointed curves).
\item No two geometric fibers of of $f$ are isomorphic (as pointed curves).
\end{enumerate}
Then the construction of Proposition \ref{P:mfoldfamiliesI} gives a family $(g: \D \rightarrow \P, \tau, \sigman)$ with the property that there is a bijection
\begin{align*}
\{\text{$k$-points $z \in \P$}\} &\leftrightarrow \{(D,q, \pn) \text{ \emph{satisfying (a),(b)}}\}/\simeq\\
z & \leftrightarrow (D_z, \tau(z), \{\sigma_i(z)\}_{i=1}^{m})
\end{align*}
where the conditions \emph{(a)} and \emph{(b)} are
\begin{itemize}
\item[(a)] $q \in D$ is an elliptic $m$-fold point,
\item[(b)] If $(\tilde{D},\qm, \pn)$ denotes the normalization of $(D,q, \pn)$  at $q$, then there exists a geometric fiber of $f$, say $(C_t,  \{\tau_i(t)\}_{i=1}^{m}, \{\sigma_i(t)\}_{i=1}^{n})$, and a proper subset $S \subset [m]$, such that $(\tilde{D},\qm, \pn)$ is obtained from $(C_t, \{\tau_i(t)\}_{i=1}^{m}, \{\sigma_i(t)\}_{i=1}^{n})$ by sprouting along $\{\tau_i(t)\}_{i \in S}$.
\end{itemize}
\end{corollary}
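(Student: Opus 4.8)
The plan is to verify first that the assignment $z\mapsto (D_z,\tau(z),\{\sigma_i(z)\}_{i=1}^{n})$ lands in the indicated set of isomorphism classes, and then to prove bijectivity by working fiberwise over $T$ and invoking the set-theoretic classification of Lemma \ref{L:mfoldmoduli}, in the pointed form recorded in the Remark following its proof. Well-definedness is immediate from Proposition \ref{P:mfoldfamiliesI}: part (4) gives that $\tau(z)$ is an elliptic $m$-fold point of $D_z$, which is (a); and if $S\subset[m]$ is the unique proper subset with $z\in U_S$ and $t:=p(z)$, then part (5) identifies the pointed normalization of $(D_z,\tau(z),\{\sigma_i(z)\}_{i=1}^{n})$ with the sprouting of the fiber $(C_t,\{\tau_i(t)\}_{i=1}^{m},\{\sigma_i(t)\}_{i=1}^{n})$ along $\{\tau_i(t)\}_{i\in S}$, which is exactly (b).

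For bijectivity, fix a geometric point $t\in T$ and look at the fiber $\P_t\cong\P(\oplus_{i=1}^{m}T_{\tau_i(t)}^{\vee})$, stratified by the locally closed pieces $U_S\cap\P_t=H_S\setminus\cup_{i\notin S}(H_i\cap H_S)$ as $S$ ranges over proper subsets of $[m]$. Hypothesis (1) forces every smooth rational component of $C_t$ to carry at least three special points, so the sprouting $(\tilde{C}',\qmp,\pn)$ of $(C_t,\ldots)$ along $S$ satisfies the hypothesis of the pointed form of Lemma \ref{L:mfoldmoduli} with this $S$ (via the automorphism computation preceding Proposition \ref{P:mfoldfamiliesI}). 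That lemma then yields a bijection between $U_S\cap\P_t$ and the isomorphism classes of triples $(D,q,\pn)$ with $q$ an elliptic $m$-fold point whose pointed normalization is that sprouting, and part (6) of Proposition \ref{P:mfoldfamiliesI} says precisely that under this bijection $D_z$ corresponds to $z$ for every $z\in U_S\cap\P_t$. Since, by definition of the target set, every $(D,q,\pn)$ satisfying (a) and (b) has pointed normalization equal to the sprouting of some fiber along some proper $S$, surjectivity follows at once.

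For injectivity, suppose $(D_z,\tau(z),\{\sigma_i(z)\}_{i=1}^{n})\simeq(D_{z'},\tau(z'),\{\sigma_i(z')\}_{i=1}^{n})$. Passing to pointed normalizations, the sprouting of $C_{p(z)}$ along $S(z)$ becomes isomorphic to the sprouting of $C_{p(z')}$ along $S(z')$. In any such sprouting the components $E_i$ ($i\in S$) are exactly the $\P^1$'s meeting the remainder of the curve in a single node and carrying precisely one marked point, which is one of the $q_i$'s and none of the $\sigma_j$'s --- again because no component of a fiber $C_t$ has this shape, by hypothesis (1). Hence $S$ is recovered, and contracting the $E_i$'s recovers $(C_t,\{\tau_i(t)\}_{i=1}^{m},\{\sigma_i(t)\}_{i=1}^{n})$; by hypotheses (1) and (2) this recovery is canonical and compatible with the labelings, so $S(z)=S(z')=:S$ and $p(z)=p(z')=:t$. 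Thus $z,z'\in U_S\cap\P_t$, and the given isomorphism, respecting the branch labels, differs from the identity only by an element of $\oplus_{i\in S}\Aut(T_{q_i}^{\vee})$; by the bijection of Lemma \ref{L:mfoldmoduli} this forces $z=z'$.

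The main obstacle is the recovery step inside the injectivity argument: one must check that the pointed normalization of an elliptic $m$-fold pointed curve $(D,q,\pn)$, together with the labeling of its branches at $q$, pins down both the source fiber $(C_t,\taum,\sigman)$ of $f$ and the sprouting locus $S$, so that the residual ambiguity is exactly the torus $\oplus_{i\in S}\Aut(T_{q_i}^{\vee})$ that Lemma \ref{L:mfoldmoduli} quotients out. This is where the two hypotheses do genuine work: (1) rules out unstable rational tails in the fibers --- which is what makes the sprouted components $E_i$ combinatorially recognizable --- and rigidifies the branch labels, while (2) identifies the contracted curve with a unique fiber. Everything else is bookkeeping: transporting the fiberwise content of Proposition \ref{P:mfoldfamiliesI} and Lemma \ref{L:mfoldmoduli} across the family.
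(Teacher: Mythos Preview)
Your proof is correct and follows essentially the same strategy as the paper: both arguments work fiberwise over $T$, stratify $\P_t$ by the locally closed pieces $U_S\cap\P_t$, and combine the pointed form of Lemma \ref{L:mfoldmoduli} with conclusion (6) of Proposition \ref{P:mfoldfamiliesI} to establish the bijection stratum-by-stratum. The main difference is that you spell out the injectivity across different pairs $(t,S)$ --- recognizing the sprouted components combinatorially via hypothesis (1) and identifying the contracted fiber via hypothesis (2) --- whereas the paper leaves this implicit in the phrase ``the claim follows.''
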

\begin{proof}
Note that the morphisms $\phi$ and $\pi$ constructed in Proposition \ref{P:mfoldfamiliesI} are isomorphisms in a neighborhood of the sections $\sigman$, so they induce sections $\sigman$ on $\D \rightarrow \P$.

To check the stated bijection, fix a geometric point $t \in T$ and a proper subset $S \subset [m]$, and let $(\tilde{D}, \qm, \pn)$ be the curve obtained from the fiber $f^{-1}(t)$ by sprouting along $\{\tau_i(t)\}_{i \in S}$. Since the fiber $f^{-1}(t)$ has no automorphisms, the automorphism group of $(\tilde{D}, \qm, \pn)$ is $(k^*)^{|S|}$, and we have
$$
\Image \left( \Aut(\tilde{D},\qm, \pn) \rightarrow \oplus_{i=1}^{m}\Aut(T_{q_i}^{\vee}) \right)=\oplus_{i \in S}\Aut(T_{q_i}^{\vee}) .
$$
Now Lemma \ref{L:mfoldmoduli} and Conclusion (6) of Proposition \ref{P:mfoldfamiliesI} imply that the fibers of $g$ over $p^{-1}(t) \cap U_{S}$ precisely range over all isomorphism classes of elliptic $m$-fold pointed curves whose pointed normalization is isomorphic to $(\tilde{D}, \qm, \pn).$ Since the locally closed subsets $U_{S}$ stratify $\P$, the fibers of $g$ over $p^{-1}(t)$ range over all isomorphism classes of elliptic $m$-fold pointed curves whose normalization is obtained from the fiber $f^{-1}(t)$ by sprouting along an arbitrary proper subset of $\{\tau_i(t)\}_{i=1}^{m}$. The claim follows.
\end{proof}

In Section 2.4, we will need a slight modification of Proposition \ref{P:mfoldfamiliesI}. Suppose we are given a family $(\C \rightarrow T, \{\tau_{i}\}_{i=1}^{l})$ with only $l$ attaching sections, where $l<m$. In Proposition \ref{P:mfoldfamiliesII}, we construct a universal family of elliptic $m$-fold pointed curves whose normalizations are the disjoint union of $m-l$ smooth rational curves and a curve obtained from a fiber of $f$ by sprouting along a proper subset of $\{\tau_i(t)\}_{i=1}^{l}$. As before, we define $\psi_i:=\tau_i^*\O_{\C}(-\tau_i)$, $p: \P:=\P(\oplus_{i=1}^{l}\psi_i) \rightarrow T,$
and abuse notation by letting $f$ and $\tau_i$ denote the pull-backs $p^*f$ and $p^*\tau_i$. Furthermore, for each $i=l+1, \ldots, m$, we define
$$
(\R^{i} \rightarrow \P, \tilde{\tau}_i)
$$
to be the one-pointed  $\P^{1}$-bundle $\P(\O_{\P} \oplus \O_{\P}(1)) \rightarrow \P$ with section $\tilde{\tau}_i$ corresponding to the quotient $\O_{\P} \oplus \O_{\P}(1) \rightarrow \O_{\P}$.

\begin{proposition}[Construction of universal elliptic $m$-fold pointed families II]\label{P:mfoldfamiliesII}
With notation as above, there exists a diagram
\[
\xymatrix{ 
\tilde{\D}^0 \ar_{\phi}[d] \ar^<<<<<<<<<<<<<<<<<<<<<<<<<<<{i}[rr]&&\tilde{\D}:=\tilde{\D}^0 \coprod \R^{l+1} \coprod \ldots \coprod \R^{m} \ar[d]^{\pi} \ar^<<<<<<<<<<<<<<{\tilde{g}}[ddl]\\
\C \times_T \P(\oplus_{i=1}^{m}\psi_i)  \ar[rd]^{f}&& \D \ar[ld]_{g}\\
&\P(\oplus_{i=1}^{l}\psi_i)  \ar@/^1pc/[lu]^{\taul}  \ar@/_1pc/[uul]_{\{\tilde{\tau}_i\}_{i=1}^{l}} \ar@/^1pc/[uur]^{\{\tilde{\tau}_i\}_{i=1}^{l}} \ar@/_1pc/[ru]_{\tau}  &
}
\]
satisfying
\begin{itemize}
\item[(1)] $g, \tilde{g}$ are flat of relative dimension one.
\item[(2)] $\phi$ is the blow-up of $\C \times_T \P$ along the smooth codimension-two locus $\cup_{i=1}^{l}(\tau_i(\P) \cap f^{-1}(H_i))$, and $\tilde{\tau_i}$ is the strict transform of $\tau_i$ for $i=1, \ldots, l$. 
\item[(3)]$i: \tilde{\D}^0 \rightarrow \tilde{\D}$ is the inclusion of $\tilde{\D}^0$ into the disjoint union $\tilde{\D}^0 \coprod \R^{l+1} \coprod \ldots \coprod \R^{m}$. 
\item[(4)] $\pi$ is an isomorphism away from $\cup_{i=1}^{m}\tilde{\tau}_i$ and $\pi(\tilde{\tau}_1)=\ldots=\pi(\tilde{\tau}_m)=\tau$.
\item[(5)] For each geometric point $z \in  \P,$ $\tau(z) \in D_z$ is an elliptic $m$-fold point. \\
\end{itemize}
Furthermore, we can describe the restriction of this diagram to a geometric point $z \in \P$ as follows. For any subset $S \subset [l]$, let $H_{S} \subset \P$ and $U_{S} \subset \P$ be defined as in Proposition \ref{P:mfoldfamiliesI}, and let $S \subset [l]$ be the unique subset such that $z \in U_{S}$. Then we have
\begin{itemize}
\item[(6)] $\phi_{z}:(\tilde{D}_{z}^0, \{\tilde{\tau}_i(z)\}_{i=1}^{l}) \rightarrow (C_z, \{\tau_i(z)\}_{i=1}^{l})$ is the sprouting of $C_{z}$ along $\{\tau_{i}(z)\}_{i \in S}$. In particular, there is a canonical identification
$$
\oplus_{i \in [l] \backslash S}T_{C_{z},\tau_i(z)}^{\vee} =\oplus_{i \in [l] \backslash S}T_{\tilde{D}^0_z,\tilde{\tau}_i(z)}^{\vee} =  \oplus_{i \in [l] \backslash S}T_{\tilde{D}_z,\tilde{\tau}_i(z)}^{\vee}. 
$$
\item[(7)] $\pi_{z}:(\tilde{D}_{z}, \{\tilde{\tau}_i(z)\}_{i=1}^{m}) \rightarrow (D_z, \tau(z))$ is the normalization of $D_{z}$ at the elliptic $m$-fold point $\tau(z)$. The codimension-one subspace $\pi^*(T_{D_z,\tau(z)}^{\vee}) \subset \oplus_{i=1}^{m}T_{\tilde{D}_z,\tilde{\tau}_i(z)}^{\vee}$ satisfies
$$
\phi^*(T_{D_{z},\tau(z)}^{\vee}) \cap \oplus_{i \in [l] \backslash S} T_{\tilde{D}_z,\tilde{\tau}_i(z)}^{\vee} = [z] \cap \oplus_{i \in [l] \backslash S} T_{C_{z},\tau_i(z)}^{\vee}, 
$$
where $[z] \subset \oplus_{i=1}^{l}T_{C_{z},\tau_i(z)}^{\vee}$ is the codimension-one subspace  corresponding to $z \in \P$, and we identify $\oplus_{i \in [l] \backslash S}T_{C_{z},\tau_i(z)}^{\vee} = \oplus_{i \in [l] \backslash S}T_{\tilde{D}_z,\tilde{\tau}_i(z)}^{\vee}$ as in (5).
\end{itemize}
\end{proposition}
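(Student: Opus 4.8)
The plan is to mimic the proof of Proposition~\ref{P:mfoldfamiliesI} almost verbatim, the only genuinely new ingredient being the behavior of the $m-l$ auxiliary $\P^1$-bundles $\R^{l+1},\ldots,\R^m$. First I would construct $\tilde{\D}^0$ exactly as in Proposition~\ref{P:mfoldfamiliesI}, but blowing up $\C\times_T\P$ only along the $l$ disjoint codimension-two loci $\cup_{i=1}^{l}(\tau_i(\P)\cap f^{-1}(H_i))$; let $E_1,\ldots,E_l$ be the exceptional divisors and $\tilde{\tau}_i$ ($i\le l$) the strict transforms of the $\tau_i$, the flatness of $\tilde{\D}^0\to\P$ being the same local computation as before. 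The relation $\phi^*\tau_i=\tilde{\tau}_i+E_i$ again gives $\tilde{\psi}_i:=\tilde{\tau}_i^*\O_{\tilde{\D}^0}(-\tilde{\tau}_i)=(p^*\psi_i)(H_i)$ for $i\le l$, so the $i$-th component $e_i$ of the tautological quotient $\oplus_{i=1}^{l}p^*\psi_i\to\O_\P(1)$, which vanishes to order one along $H_i$ and nowhere else, induces an isomorphism $\tilde{e}_i\colon\tilde{\psi}_i\simeq\O_\P(1)$.

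For the auxiliary branches I would compute $\tilde{\psi}_i:=\tilde{\tau}_i^*\O_{\R^i}(-\tilde{\tau}_i)$ for $i>l$ directly from the definition of $\R^i=\P(\O_\P\oplus\O_\P(1))$: the section $\tilde{\tau}_i$ corresponds to the quotient $\O_\P\oplus\O_\P(1)\to\O_\P$, whose kernel is $\O_\P(1)$, so the relative Euler sequence identifies the conormal bundle of $\tilde{\tau}_i$ in $\R^i$ with $\O_\P(1)$, i.e.\ $\tilde{\psi}_i\simeq\O_\P(1)$; call this isomorphism $\tilde{e}_i$. Thus, setting $\tilde{\D}:=\tilde{\D}^0\coprod\R^{l+1}\coprod\cdots\coprod\R^m$ with $i\colon\tilde{\D}^0\hookrightarrow\tilde{\D}$ the inclusion and $\tilde{g}\colon\tilde{\D}\to\P$ the structure map, we have $\tilde{\psi}_i\simeq\O_\P(1)$ for \emph{all} $i=1,\ldots,m$, via an isomorphism $\tilde{e}_i$ that is an isomorphism at \emph{every} point of $\P$. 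Taking the direct sum of the $\tilde{e}_i$ produces the exact sequence
\[
0\longrightarrow\E\longrightarrow\bigoplus_{i=1}^{m}\tilde{\psi}_i\longrightarrow\O_\P(1)\longrightarrow 0,
\]
and, exactly as in Proposition~\ref{P:mfoldfamiliesI}, the fiber $\E_z\subset\oplus_{i=1}^{m}T^\vee_{\tilde{\tau}_i(z)}$ contains none of the coordinate lines $T^\vee_{\tilde{\tau}_i(z)}$.

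Next I would build $\D$ by the same gluing procedure: working locally around the sections $\tilde{\tau}_1,\ldots,\tilde{\tau}_m$ — where $\tilde{g}$ is smooth and affine, this being automatic along $\tilde{\tau}_i$ for $i>l$ since $\R^i\to\P$ is a $\P^1$-bundle — let $\F\subset\tilde{g}_*\O_{\tilde{\D}}(-\tilde{\tau}_1-\cdots-\tilde{\tau}_m)$ be the preimage of $\E$ under the evaluation map onto $\oplus_{i=1}^{m}\tilde{\psi}_i$, let $\G\subset\tilde{g}_*\O_{\tilde{\D}}$ be the sheaf of $\O_\P$-subalgebras generated by $\F$, set $\D:=\underline{\Spec}_{\O_\P}\G$, and let $\pi\colon\tilde{\D}\to\D$ be induced by $\G\subset\tilde{g}_*\O_{\tilde{\D}}$. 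This is literally the construction of Proposition~\ref{P:mfoldfamiliesI} with $m$ replaced by $l$ in the blow-up but retained in the rank of $\oplus\tilde{\psi}_i$. Properties (1)–(5) then follow as before: flatness of $g$ and $\tilde{g}$, the blow-up description of $\phi$ and of $\tilde{\tau}_i$ ($i\le l$), the fact that $i$ is the inclusion into the disjoint union, that any section of $\G$ vanishes along one $\tilde{\tau}_j$ iff along all of them (hence $\pi(\tilde{\tau}_1)=\cdots=\pi(\tilde{\tau}_m)=\tau$ and $\pi$ is an isomorphism elsewhere), and — using $\pi_z^*\O_{D_z}(-2\tau(z))=\O_{\tilde{D}_z}(-2\tilde{\tau}_1(z)-\cdots-2\tilde{\tau}_m(z))$, $\pi_z^*(T^\vee_{\tau(z)})=\E_z$, and \cite[Lemma 2.2]{SmythEI} — that $\tau(z)$ is an elliptic $m$-fold point.

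Finally, for the fiberwise description (6) and (7), I would argue as in Proposition~\ref{P:mfoldfamiliesI}: for $z\in U_S$ with $S\subset[l]$, the fiber of $\C\times_T\P$ over $z$ meets the blow-up center transversally precisely along $\tau_i(z)$ for $i\in S$, so $\phi_z$ is the sprouting of $C_z$ along $\{\tau_i(z)\}_{i\in S}$ while the auxiliary $\R^{l+1},\ldots,\R^m$ supply the $m-l$ extra smooth rational components of $\tilde{D}_z$; and for $i\in[l]\setminus S$ the inclusion $p^*\psi_i\hookrightarrow\tilde{\psi}_i$ is an isomorphism near $z$, so restricting the defining sequence of $\E$ to the summands indexed by $[l]\setminus S$ gives a square commuting with the tautological sequence on $\P$, which yields $\pi^*(T^\vee_{D_z,\tau(z)})\cap\oplus_{i\in[l]\setminus S}T^\vee_{\tilde{D}_z,\tilde{\tau}_i(z)}=[z]\cap\oplus_{i\in[l]\setminus S}T^\vee_{C_z,\tau_i(z)}$. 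The only point needing care — the closest thing to an obstacle — is verifying that the local structure of $\D$ at $\tau(z)$ is genuinely symmetric in all $m$ branches even though the branches $i>l$ come from the $\R^i$ rather than from the blow-up of $\C\times_T\P$; this is exactly what the uniform identification $\tilde{\psi}_i\simeq\O_\P(1)$ provides, since after passing to an etale neighborhood of the sections each of the $m$ branches is a smooth affine $\P^1$-bundle over $\P$ with relative cotangent line $\O_\P(1)$, so the local computation reducing to \cite[Lemma 2.2]{SmythEI} is word-for-word that of Proposition~\ref{P:mfoldfamiliesI}.
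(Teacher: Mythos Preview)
Your proposal is correct and follows essentially the same approach as the paper: construct $\phi$ by the blow-up of Proposition~\ref{P:mfoldfamiliesI} (using only the $l$ loci), establish the uniform identification $\tilde{\psi}_i\simeq\O_\P(1)$ for all $i=1,\ldots,m$---the cases $i\le l$ as before and the cases $i>l$ by the Euler-sequence computation on the $\P^1$-bundle $\R^i$---and then feed the resulting sequence $0\to\E\to\oplus_{i=1}^m\tilde{\psi}_i\to\O_\P(1)\to 0$ into the gluing construction of Proposition~\ref{P:mfoldfamiliesI} verbatim. The paper's proof is terser (it simply declares the $i>l$ case a ``standard computation on the projective bundle $\R^i$'' and refers back to Proposition~\ref{P:mfoldfamiliesI} for everything else), but you have supplied exactly the details it omits, including the point about symmetry of the branches, which is indeed resolved by the uniform isomorphism $\tilde{\psi}_i\simeq\O_\P(1)$.
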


\begin{proof}
The blow-up $\phi$ is constructed as in Proposition \ref{P:mfoldfamiliesI}. To construct $\pi$, we use the sections $\{\tilde{\tau}_i\}_{i=1}^{l}$ on $\tilde{\D}^0$ and the sections $\tilde{\tau}_{l+1}, \ldots, \tilde{\tau}_m$ on  $\R^{l+1}, \ldots, \R^m$. Set
$
\tilde{\psi}_i:=\tilde{\tau}_i^*\O_{\tilde{\D}}(-\tau_i)
$
and observe that, for each $i=1, \ldots, m$, we have a natural isomorphism
$$e_{i}:\tilde{\psi_i} \simeq \O_{\P}(1).$$
For $i=1, \ldots, l$, the existence of $e_i$ follows as in the proof of Proposition \ref{P:mfoldfamiliesI}. For $i=l+1, \ldots, m$, this is a standard computation on the projective bundle $\R^i$.
Taking the direct sum of the isomorphisms $e_{i}$, we obtain an exact sequence
$$
0 \rightarrow \E \rightarrow \oplus_{i=1}^{m} \tilde{\psi}_{i} \rightarrow \O_{\P}(1) \rightarrow 0,
$$
where $\E$ has the property that for each point $z \in \P$ induced subspace
$\E_{z} \subset \oplus_{i=1}^{m} T^{\vee}_{\tilde{\sigma}_i(z)}$
does not contain any of the lines $T_{\tilde{\sigma}_i(z)}^{\vee}$. Using $\E$, we may construct
$
\phi: \tilde{\D} \rightarrow \D,
$
and verify Properties (4)-(7) precisely as in Proposition \ref{P:mfoldfamiliesI}.
\end{proof}

Since each of the projective bundles $\R^{i} \rightarrow \P$ is endowed with a distinguished section disjoint from the attaching section (namely, the section corresponding to the quotient $\O_{\P} \oplus \O_{\P}(1) \rightarrow \O_{\P}(1) \rightarrow 0$), we may use the previous proposition to construct universal families of $n$-pointed elliptic $m$-fold points curves from $(n-m+l)$-pointed families of normalizations.

\begin{corollary}\label{C:mfoldfamiliesII}
Suppose $(f: \C \rightarrow T, \taul, \sigmav{n-m+l})$ is a family of pointed curves satisfying
\begin{enumerate}
\item The geometric fibers of $f$ have no automorphisms (as pointed curves).
\item No two geometric fibers of of $f$ are isomorphic (as pointed curves).
\end{enumerate}
Then the construction of Proposition \ref{P:mfoldfamiliesI} gives rise to a family of $n$-pointed curves $(g: \D \rightarrow \P, \tau, \sigman)$ with the property that there is a bijection
\begin{align*}
\{\text{$k$-points $z \in \P$}\} &\leftrightarrow \{(D,q, \pn) \text{ \emph{satisfying (a),(b)}}\}/\simeq\\
z \in \P & \rightarrow (D_z, \tau(z), \{\sigma_i(z)\}_{i=1}^{m})
\end{align*}
where the conditions \emph{(a)} and \emph{(b)} are
\begin{itemize}
\item[(a)] $q \in D$ is an elliptic $m$-fold point,
\item[(b)] The normalization of $(D,q, \pn)$  at $q$ is a disjoint union
$$
(\tilde{D}^0, \ql, \pv{n-m+l})\coprod \left( \coprod_{i=l+1}^{m} (R_i, q_i, p_{n-m+i}) \right)
$$
where $(\tilde{D}^0, \ql, \pv{n-m+l})$ is obtained from a geometric fiber of $f$, say $(C_t,  \{\tau_i(t)\}_{i=1}^{l}, \{\sigma_i(t)\}_{i=1}^{n-m+l})$, by sprouting along $\{\tau_i(t)\}_{i \in S}$ for some $S \subset [l]$, and each $(R_i, q_i, p_{n-m+i}) \simeq (\P^1, 0, \infty)$.
\end{itemize}
\end{corollary}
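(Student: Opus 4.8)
The plan is to follow the proof of Corollary \ref{C:mfoldfamiliesI} essentially verbatim, bookkeeping the $m-l$ auxiliary $\P^{1}$-bundles $\R^{l+1}, \ldots, \R^{m}$. First I would produce the $n$ sections of $g\colon \D \to \P$: the morphisms $\phi$ and $\pi$ of Proposition \ref{P:mfoldfamiliesII} are isomorphisms in a neighborhood of the $n-m+l$ sections $\sigmav{n-m+l}$ pulled back from $\C$, so these descend to $\D$, while the remaining $m-l$ sections are the images under $\pi$ of the distinguished sections of $\R^{l+1}, \ldots, \R^{m}$ (those corresponding to the quotient $\O_{\P} \oplus \O_{\P}(1) \to \O_{\P}(1) \to 0$), which are disjoint from the attaching sections $\tilde\tau_i$ and hence also descend. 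This produces the family $(g\colon \D \to \P, \tau, \sigman)$.

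Next I would verify the asserted bijection one fiber of $T$ at a time. Fix a geometric point $t \in T$ and a proper subset $S \subsetneq [l]$, and let $(\tilde D, \qm, \pn)$ be the pointed curve obtained from $f^{-1}(t)$ by sprouting along $\{\tau_i(t)\}_{i \in S}$ and adjoining the components $(R_i, q_i, p_{n-m+i}) \simeq (\P^{1}, 0, \infty)$ for $i = l+1, \ldots, m$. Since $f^{-1}(t)$ has no automorphisms, $\Aut(\tilde D, \qm, \pn) = (k^{*})^{|S|+(m-l)}$, with one $k^{*}$-factor for each sprouted $\P^{1}$ and each auxiliary component, and the image of the natural map $\Aut(\tilde D, \qm, \pn) \to \oplus_{i=1}^{m}\Aut(T_{q_i}^{\vee})$ is exactly $\oplus_{i \in S'}\Aut(T_{q_i}^{\vee})$, where $S' := S \cup \{l+1, \ldots, m\}$. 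The key point is that $S'$ is a \emph{proper} subset of $[m]$ (because $S \subsetneq [l]$), so Lemma \ref{L:mfoldmoduli} applies and identifies the set of isomorphism classes of elliptic $m$-fold pointed curves with pointed normalization $(\tilde D, \qm, \pn)$ with $H_{S'} \setminus \bigcup_{i \notin S'}(H_i \cap H_{S'})$, i.e.\ with the projection to the coordinates indexed by $[m] \setminus S' = [l] \setminus S$. Comparing this with Conclusion (7) of Proposition \ref{P:mfoldfamiliesII} --- which asserts precisely that $\pi^{*}(T_{D_z,\tau(z)}^{\vee})$ has projection $[z]$ onto the $([l]\setminus S)$-coordinates --- shows that as $z$ ranges over $p^{-1}(t) \cap U_S$, the fibers $(D_z, \tau(z), \{\sigma_i(z)\}_{i=1}^{n})$ range over exactly these isomorphism classes, each exactly once.

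Finally, since the subschemes $U_S$ for $S \subsetneq [l]$ stratify $\P$, letting $S$ vary shows that the fibers of $g$ over $p^{-1}(t)$ exhaust, bijectively, all elliptic $m$-fold pointed curves whose normalization has the form described in (b) with underlying fiber $f^{-1}(t)$; and since no two fibers of $f$ are isomorphic, both $t$ and $S$ are recovered from a given $(D, q, \pn)$ by contracting the (intrinsically identifiable) sprouted and auxiliary $\P^{1}$'s, so letting $t$ range over $T$ assembles these fiberwise bijections into the desired global bijection. The single step that demands genuine care --- rather than transcription from Corollary \ref{C:mfoldfamiliesI} --- is the automorphism-group computation: one must confirm that the $m-l$ scalings of the $R_i$ together with the $|S|$ sprouting scalings exhaust $\Aut(\tilde D, \qm, \pn)$ and land in the coordinate subtorus indexed by $S'$, and in particular that $S'$ is proper, which is exactly the hypothesis needed to invoke Lemma \ref{L:mfoldmoduli}.
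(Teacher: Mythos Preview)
Your proposal is correct and follows exactly the approach the paper takes: the paper's proof simply constructs the sections as you do and then declares the bijection argument ``essentially identical to the proof of Corollary \ref{C:mfoldfamiliesI},'' which is precisely the argument you have written out in detail. In fact, your explicit identification of $S' = S \cup \{l+1,\ldots,m\}$ and the observation that $S'$ is proper (so that Lemma \ref{L:mfoldmoduli} applies) make your write-up more careful than the paper's terse reference.
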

\begin{proof}
The morphisms $\phi$ and $\pi$ are isomorphisms in a neighborhood of $\sigmav{n-m+l}$, so $\sigmav{n-m+l}$ induce sections on $g:\D \rightarrow \P$. For $i=l+1, \ldots, m$, we define $\sigma_{n-m+i}$ to be the section of $\R^{i} \rightarrow \P$ corresponding to the quotient $\O_{\P} \oplus \O_{\P}(1) \rightarrow \O_{\P}(1) \rightarrow 0$. Since this section is disjoint from the attaching section $\tilde{\tau}_{i}$, it induces a section of $\D \rightarrow \P$. All together, we obtain a family of $n$-pointed curves $(\D \rightarrow \P, \sigman)$. The proof of the stated bijection is essentially identical to the proof of Corollary \ref{C:mfoldfamiliesI}.
\end{proof}

\subsection{Stratification by singularity type}\label{S:Stratification} In Section 2.1, we defined a stratification of $\S_{1,n}(m)$ by singularity type:
$$
\S_{1,n}(m)= \mathcal{M}_{1,n} \coprod \E_{0} \coprod \E_{1} \coprod \ldots \coprod \E_{m}.
$$
In this section, we construct the strata $\E_{l}$ $(l \geq 1)$ explicitly. We will show that the irreducible components (equivalently, by Corollary \ref{C:BasicStratification} (2), the connected components) of $\E_{l}$ are indexed by partitions of $[n]$ into $l$ subsets, i.e. we have
$$
\E_{l}= \coprod_{\Sigma} \E_{\Sigma},
$$ 
where $\Sigma$ runs over $l$-partitions of $[n]$. To describe the curves parametrized by the irreducible component $\E_{\Sigma}$, we need the following definition.
\begin{definition}[Combinatorial type]
Let $(C, \pn)$ be an $m$-stable curve with an elliptic $l$-fold point $q \in C$. Then the normalization of $C$ at $q$ consists of $l$ distinct connected components, each of which carries at least one of the marked points $\pn$. We define the \emph{combinatorial type of }$(C,\pn)$ to be the partition $\{S_{1}, \ldots, S_{l}\}$ of $[n]$ induced by the connected components of $\tilde{C}$.
\end{definition}
Given a partition $\Sigma:=\{S_1, \ldots, S_l\}$ of $[n]$, we will construct a universal family for all $m$-stable curves of combinatorial type $\Sigma$. We must consider two cases:
\begin{CaseI} Each $S_{i}$ satisfies $|S_i| \geq 2$.
\end{CaseI}
Let
$
f_i: \C_{i} \rightarrow  \M_{0,|S_1|+1} \times \ldots \times \M_{0,|S_l|+1}
$
be the pull back of the universal curve over $\M_{0,|S_i|+1}$, and label the tautological sections of $f_i$ as $\{\sigma_{j}: j \in S_i\} \cup \{\tau_i\}.$ Now apply Proposition \ref{P:mfoldfamiliesI} with 
\begin{align*}
T:=&\M_{0,|S_1|+1} \times \ldots \times \M_{0,|S_k|+1}\\
f:= &\coprod_{i=1}^{l} \C_{i} \rightarrow \M_{0,|S_1|+1} \times \ldots \times \M_{0,|S_l|+1}\\
\tau_i:= &T \rightarrow \C_{i} \hookrightarrow \coprod \C_{i}. 
\end{align*}
By Corollary \ref{C:mfoldfamiliesI}, we obtain an $n$-pointed family of curves $$(g: \D \rightarrow \P,\{\sigma_i\}_{i=1}^{n})$$
over the projective bundle
$
\P:=\P(\oplus_{i=1}^{l}\psi_i) \rightarrow T,
$
such that the fibers of $g$ range over all isomorphism classes of elliptic $l$-fold pointed curves $(D,q, \pn)$ whose normalization $(\tilde{D}, \ql, \pn)$ is obtained from a fiber of $f$ by sprouting along a proper subset of the points $\{\tau_i(t)\}_{i=1}^{l}$. 

Since the normalization of any $m$-stable curve of combinatorial type $\Sigma$ at its unique elliptic $l$-fold point is obtained from a disjoint union of $l$ stable curves of genus zero by sprouting along a subset of attaching points, every $m$-stable curve of combinatorial type $\Sigma$ appears as a fiber of $\D$. On the other hand, some fibers of $\D \rightarrow \P$ may be fail to be $m$-stable (i.e. they may have elliptic $l$-bridges for some $l<k \leq m$). Since $m$-stability is an open condition however \cite[Lemma 3.10]{SmythEI}, there is a maximal Zariski open subset $\E_{\Sigma} \subset \P$ such that the fibers of $g$ over $\E_{\Sigma}$ are $m$-stable, and we obtain an $m$-stable curve
$$(g: \C \rightarrow \E_{\Sigma}, \sigman).$$
whose fibers comprise all $m$-stable curves of combinatorial type $\Sigma$.

\begin{CaseII}
One or more of $S_{i}$ satisfy $|S_i|=1$.
\end{CaseII}
Order the $S_{i}$ so that $|S_i| \geq 2$ for $i=1, \ldots, k$, and $|S_i|=1$ for $i=k+1, \ldots, l$. For $i=1, \ldots, k$, let
$
f_i: \C_{i} \rightarrow  \M_{0,|S_1|+1} \times \ldots \times \M_{0,|S_k|+1}
$
be the pull-back of the universal curve over $\M_{0,|S_i|+1}$, and label the tautological sections of $f_i$ as $\{\sigma_{j}: j \in S_i\} \cup \{\tau_i\}.$ Now apply Proposition \ref{P:mfoldfamiliesII} with 
\begin{align*}
T:=&\M_{0,|S_1|+1} \times \ldots \times \M_{0,|S_k|+1}\\
f:= &\coprod_{i=1}^{k} \C_{i} \rightarrow \M_{0,|S_1|+1} \times \ldots \times \M_{0,|S_k|+1}\\
\tau_i:= &T \rightarrow \C_{i} \hookrightarrow \coprod \C_{i}.\\
\end{align*}
By Corollary \ref{C:mfoldfamiliesII}, we obtain a family of $n$-pointed curves $(g: \D \rightarrow \P, \sigman)$ over the projective bundle
$
\P:=\P(\oplus_{i=1}^{k}\psi_i) \rightarrow T,
$
such that the fibers of $g$ range over all isomorphism classes of curves whose normalization is a disjoint union $l-k$ smooth one-pointed rational curves and a curve obtained from a fiber of $f$ by sprouting along a proper subset of the points $\{\tau_i(t)\}_{i=1}^{k}$. Note that we consider the $l-k$ sections lying on the one-pointed rational components as labeled by the elements in $S_{k+1}, \ldots, S_{l}$.

As in Case I, there is a maximal Zariski open subset $\E_{\Sigma} \subset \P$ such that the fibers of $g$ over $\E_{\Sigma}$ are $m$-stable, and we obtain an $m$-stable curve
$$(g: \C \rightarrow \E_{\Sigma}, \sigman),$$
whose fibers comprise all $m$-stable curves of combinatorial type $\Sigma$.\\

\begin{proposition}\label{P:BoundaryStratification}
The natural classifying map
$$
\coprod_{\Sigma} \E_{\Sigma} \rightarrow \E_{l} \subset \S_{1,n}(m)
$$
is an isomorphism. In particular, the varieties $\E_{\Sigma}$ are the irreducible components of $\E_{l}$.
\end{proposition}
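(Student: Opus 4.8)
The plan is to produce the classifying morphism, reduce to a single combinatorial type $\Sigma$, check that the resulting map is bijective on geometric points, and then upgrade the bijection to an isomorphism using smoothness together with Zariski's Main Theorem. First I would observe that the families $(g\colon \C \to \E_\Sigma, \sigman)$ constructed above consist of $m$-stable curves, hence define classifying morphisms $c_\Sigma\colon \E_\Sigma \to \S_{1,n}(m)$, and that since every geometric fibre carries an elliptic $l$-fold point these factor through the locally closed substack $\E_l$. The combinatorial type of an $m$-stable curve with an elliptic $l$-fold point is an isomorphism invariant and is locally constant on $\E_l$ (in a family keeping the elliptic $l$-fold point, the branches of the normalization and the partition of $\pn$ among them vary in families), so $\E_l$ is the disjoint union of open and closed substacks $\E_l^\Sigma$, the locus of combinatorial type $\Sigma$; it therefore suffices to prove that each $c_\Sigma\colon \E_\Sigma \to \E_l^\Sigma$ is an isomorphism.

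Next I would verify bijectivity on $\bar k$-points. Given an $m$-stable curve $(C,\pn)$ with an elliptic $l$-fold point $q$ and combinatorial type $\Sigma = \{S_1,\dots,S_l\}$, the $m$-stability conditions force each connected component of the pointed normalization $(\tilde C, \{q_i\}, \pn)$ to be a nodal genus-zero curve that is either stably pointed or obtained from a stably pointed $(|S_i|+1)$-pointed curve by sprouting at its attaching point, except that a branch with $|S_i| = 1$ is simply a $\P^1$ carrying the attaching point and its one marked point — this is precisely the Case I / Case II dichotomy — and by stability condition (3)(b) the sprouting set is a \emph{proper} subset of $[l]$ (a sprouted branch has only two distinguished points, so some branch must remain unsprouted). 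Hence $(C,q,\pn)$ lies in the essential image of the family of Corollary \ref{C:mfoldfamiliesI} (respectively Corollary \ref{C:mfoldfamiliesII}), so it occurs as a fibre of $\C \to \E_\Sigma$, and over a \emph{unique} point, since the injectivity in those corollaries is built on Lemma \ref{L:mfoldmoduli}, which matches isomorphism classes of elliptic $m$-fold attachings with the strata of the projective bundle. This gives the bijection on geometric points, and in particular forces $\dim\E_\Sigma = \dim\E_l^\Sigma$.

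Finally I would upgrade this to an isomorphism. Both sides are smooth: $\E_\Sigma$ is open in a projective bundle over a product of spaces $\M_{0,k}$, and $\E_l^\Sigma$ is open in $\E_l$, which is smooth by Corollary \ref{C:BasicStratification}(2); moreover $\E_l^\Sigma$ is representable, because an $m$-stable curve with an elliptic $l$-fold point has no nontrivial automorphisms — the only automorphism of $(\tilde C, \{q_i\}, \pn)$ preserving the attaching subspace $\pi^*(T_q^\vee)$ is the identity, by the free-action computation in the proof of Lemma \ref{L:mfoldmoduli}. A surjective morphism of irreducible varieties of the same dimension in characteristic zero is generically étale, and being moreover universally injective it restricts to an open immersion on a dense open set, so $c_\Sigma$ is birational; a quasi-finite birational morphism onto a normal variety is an isomorphism by Zariski's Main Theorem. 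Hence each $c_\Sigma$, and so $c = \coprod_\Sigma c_\Sigma$, is an isomorphism, and since each $\E_\Sigma$ is irreducible these are exactly the irreducible (equivalently connected) components of $\E_l$.

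The main obstacle is this last step. One really needs the smoothness of $\E_l$ — which is precisely the content of Corollary \ref{C:BasicStratification}(2) — and its representability, in order to run the clean ``bijective $+$ generically étale $\Rightarrow$ isomorphism'' argument; if one wishes to avoid Zariski's Main Theorem, the alternative is to compute directly that $c_\Sigma$ induces an isomorphism on Zariski tangent spaces at every point, identifying $T_{[C]}\E_l^\Sigma$ with (first-order deformations of the normalization) $\oplus$ (first-order deformations of the attaching data) via the deformation theory of Section \ref{S:DeformationTheory}, and then conclude that $c_\Sigma$ is étale, hence an isomorphism because it is bijective.
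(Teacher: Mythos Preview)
Your proposal is correct and follows essentially the same route as the paper: both establish bijectivity on $k$-points via the constructions of Corollaries \ref{C:mfoldfamiliesI}--\ref{C:mfoldfamiliesII}, invoke the smoothness of $\E_l$ from Corollary \ref{C:BasicStratification}(2), and conclude that a bijection between smooth spaces in characteristic zero is an isomorphism. The paper compresses your final step into the single sentence ``since both sides are smooth the morphism is smooth, and a smooth morphism bijective on $k$-points is an isomorphism,'' whereas you unpack this via Zariski's Main Theorem and take the extra care of noting representability; your version is more detailed but the underlying argument is the same.
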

\begin{proof}
Since every point of $\E_{l}$ is an $m$-stable curve whose combinatorial type is given by some $l$-partition of $[n]$, the natural map
$$\coprod_{\Sigma}\E_{\Sigma} \rightarrow \E_{l}$$
is bijective on $k$-points. Since $\cup_{\Sigma}\E_{\Sigma}$ is smooth by construction and $\E_{l}$ is smooth by Corollary \ref{C:BasicStratification} (2), the morphism $\coprod_{\Sigma}\E_{\Sigma} \rightarrow \E_{l}$ is smooth. Since we are working in characteristic zero, a smooth morphism which is bijective on $k$-points is an isomorphism.
\end{proof}

\begin{corollary}\label{C:DimStrata}
The boundary stratum $\E_{l} \subset \S_{1,n}(m)$ has pure codimension $l+1$.
\end{corollary}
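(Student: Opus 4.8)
The plan is to deduce this directly from the explicit description of $\E_l$ provided by Proposition \ref{P:BoundaryStratification}, which identifies $\E_l$ with the disjoint union $\coprod_\Sigma \E_\Sigma$, where $\Sigma$ ranges over $l$-partitions of $[n]$. Thus it suffices to compute $\dim \E_\Sigma$ for each such $\Sigma$ and check it equals $\dim \S_{1,n}(m)-(l+1)$. Since $\S_{1,n}(m)$ is irreducible of dimension $n$ (it contains $\mathcal{M}_{1,n}$ as a dense open), the goal becomes: $\dim \E_\Sigma = n-l-1$ for every $l$-partition $\Sigma$ of $[n]$.

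To compute $\dim \E_\Sigma$, I would read it off the projective-bundle construction of Section \ref{S:Stratification}. Write $\Sigma = \{S_1,\dots,S_l\}$ and let $k$ be the number of parts with $|S_i|\geq 2$; since $l \leq m < n$ forces $l<n$, we have $k \geq 1$. In both Case I ($k=l$) and Case II ($k<l$), the variety $\E_\Sigma$ is a Zariski-open subset of the projective bundle $\P = \P(\oplus_{i=1}^{k}\psi_i) \to T$ with $T = \prod_{i\,:\,|S_i|\geq 2}\M_{0,|S_i|+1}$; the $l-k$ singleton parts arising in Case II contribute only the rigid $\P^1$-bundles $\R^{i}\to\P$, which do not affect $\dim\P$. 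As $T$ is irreducible (a product of irreducible $\M_{0,r}$'s) and $\P\to T$ is a projective bundle, $\P$ is irreducible, so every nonempty $\E_\Sigma$ satisfies $\dim \E_\Sigma = \dim \P$.

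The rest is bookkeeping. Using $\dim \M_{0,|S_i|+1} = |S_i|-2$ and the fact that $\sum_{i=1}^{l}|S_i| = n$ with exactly $l-k$ of the parts of size $1$, one gets $\sum_{i\,:\,|S_i|\geq 2}|S_i| = n-(l-k)$, hence $\dim T = \sum_{i\,:\,|S_i|\geq 2}(|S_i|-2) = (n-l+k)-2k = n-l-k$. Adding the relative dimension $k-1$ of $\P(\oplus_{i=1}^{k}\psi_i)\to T$ yields $\dim \E_\Sigma = \dim\P = (n-l-k)+(k-1) = n-l-1$, independent of $\Sigma$. Therefore every irreducible component $\E_\Sigma$ of $\E_l$ has codimension $l+1$ in $\S_{1,n}(m)$, which is the assertion (and the statement holds regardless of whether particular $\E_\Sigma$ are empty). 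There is no genuine obstacle here beyond keeping the two cases of Section \ref{S:Stratification} aligned and observing that the auxiliary rational components introduced in Case II vary in no moduli over $\P$.
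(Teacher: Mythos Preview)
Your proof is correct and follows essentially the same approach as the paper: both read off the dimension of each $\E_\Sigma$ from its description as an open subset of the projective bundle $\P(\oplus_{i=1}^{k}\psi_i)\to\prod_{i:|S_i|\geq 2}\M_{0,|S_i|+1}$, compute $\dim\P=\sum_{i=1}^{k}(|S_i|-2)+(k-1)=n-l-1$ using $\sum_{i=1}^{k}|S_i|=n-(l-k)$, and conclude. Your version adds a few clarifying remarks (why $k\geq1$, why nonempty open $\E_\Sigma\subset\P$ has full dimension), but the argument is the same.
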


\begin{proof}
If $\Sigma:=\{S_1, \ldots, S_l\}$ is any $l$-partition of $[n]$, ordered so that $|S_{i}| \geq 2$ for $i=1, \ldots, k$, and $|S_{k+1}|=\ldots=|S_l|=1$, then $ \E_{\Sigma} \subset \E_{l}$ is an open subset of a projective bundle $\P(\oplus_{i=1}^{k}\psi_{i}) \rightarrow \M_{0,|S_1|+1} \times \ldots \M_{0,|S_k|+1}$. The dimension of this projective bundle is
$$\sum_{i=1}^{k}(|S_i|-2)+(k-1).$$
Since $\sum_{i=1}^{k}|S_i|=n-l+k,$ this expression reduces to $n-l-1$, as desired.
\end{proof}

\section{Intersection theory on $\S_{1,n}(m)$}\label{S:IntersectionTheory}

\subsection{The Picard group of $\M_{1,n}(m)^*$}\label{S:PicardGroup}
In this section, we will define several tautological divisor classes on $\S_{1,n}$, $\S_{1,n}(m)$, and $\S_{1,n}(m)^*$ (equivalently, $\M_{1,n}$, $\M_{1,n}(m)$, and $\M_{1,n}(m)^*$), and use these to give a complete description of $\Pic_{\Q}(\M_{1,n}(m)^*)$.  

We begin by recalling the definition of the tautological divisor classes on $\S_{1,n}$. If $\pi: \C \rightarrow \S_{1,n}$ is the universal curve, with universal sections $\sigma_1, \ldots, \sigma_n$, we have line bundles $\lambda, \psi_1, \ldots, \psi_n, \psi \in \Pic(\S_{1,n})$ defined as:
\begin{align*}
\lambda&= \text{det } (\pi_* \omega_{\C/ \S_{1,n}}),\\
\psi_i&=\sigma_i^*(\omega_{\C/\S_{1,n}}),\\
\psi&=\otimes_{i=1}^{n}\psi_i.
\end{align*}
To define the boundary divisors  of $\S_{1,n}$, we adopt the following terminology: If $(C,\pn)$ is an $n$-pointed curve of arithmetic genus one and $S \subset [n]$ is any subset, we say that $q \in C$ is a \emph{node of type $S$} if the normalization of $C$ at $q$ consists of two connected components (necessarily of genus zero and one), and $\{p_i \, | \, i \in S\}$ is the set of marked points supported on the genus zero component. We say that a node $q \in C$ is \emph{non-disconnecting} if the normalization of $C$ at $q$ is connected. We then define
\begin{align*}
\Delta_{irr} &:=  \{[C] \in \S_{1,n} | \text{ $C$ has a non-disconnecting node}\} \subset \S_{1,n},\\
\Delta_{0,S} &:= \{[C] \in \S_{1,n} | \text{ $C$ has a node of type $S$}\} \subset \S_{1,n},\\
\Delta_{0}&:= \{[C] \in \S_{1,n} | \text{ $C$ has a disconnecting node}\} \subset \S_{1,n}.
\end{align*}
$\Delta_{irr}$ and $\Delta_{0,S}$ are closed, irreducible, codimension one substacks of $\S_{1,n}$ when $|S| \geq 2$, while $\Delta_{0}=\sum_{S \subset [n]} \Delta_{0,S}$. Thus, we obtain cycles
$$\Delta_{irr}, \Delta_{0,S}, \Delta_{0} \in \A^1(\S_{1,n}).$$
Since the deformation space of a node is regular, these substacks are Cartier, and we obtain line bundles
$$
\delta_{irr}, \delta_{0,S}, \delta_{0} \in \Pic(\S_{1,n}).
$$

Now let us define the analagous tautological divisor classes on $\S_{1,n}(m)$.
We define $\lambda, \psi_1, \ldots, \psi_n, \psi \in \Pic(\S_{1,n}(m))$ by precisely the same recipes as on $\S_{1,n}$. Similarly, we define reduced closed substacks of $\S_{1,n}(m)$:
\begin{align*}
\Delta_{irr}&:=\{[C] \in \S_{1,n}(m) | \text{ $C$ has a non-disconnecting node or non-nodal singularity}\},\\ 
\Delta_{0,S}&:=\{[C] \in \S_{1,n}(m) | \text{ $C$ has a node of type $S$}\},\\
\Delta_{0}&:=\{[C] \in \S_{1,n}(m) | \text{ $C$ has a disconnecting node}\}.
\end{align*}
Note that $\Delta_{0,S}$ is non-empty iff $2 \leq |S| \leq n-m$. (The condition that $|S| \leq n-m$ comes from the requirement that $(C,\pn)$ have no elliptic $m$-bridge.) 

With this notation, $\Delta_{irr}, \Delta_{0,S}, \Delta_{0} \in \SV{n}{m}$ are simply the birational images of the corresponding divisors on $\S_{1,n}$. In particular, they are irreducible and we obtain
$$\Delta_{irr}, \Delta_{0,S}, \Delta_0 \in \A^1(\S_{1,n}(m)).$$
As before, each substack $\Delta_{0,S}$ is Cartier, so we obtain line bundles
$$
\delta_{0,S}, \delta_{0} \in \Pic(\S_{1,n}(m)).
$$
On the other hand, $\Delta_{irr} \subset \S_{1,n}(m)$ is \emph{not} obviously Cartier, so we do not immediately obtain a line bundle $\delta_{irr} \in  \Pic(\S_{1,n}(m))$.

Finally, we we will abuse notation by using $\lambda, \psi_i, \psi, \Delta_{irr}, \Delta_{0,S}$
to denote the line bundles and cycles on $\M_{1,n}(m)$ and $\M_{1,n}(m)^*$ induced by the canonical isomorphisms \cite[Proposition 6.1]{Vistoli}
\begin{align*}
\Pic_{\Q}(\S_{1,n}(m)) &\simeq \Pic_{\Q}(\M_{1,n}(m)),& \Pic_{\Q}(\NS{n}{m}) &\simeq \Pic_{\Q}(\NM{n}{m}),\\
\A^1_{\Q}(\S_{1,n}(m)) &\simeq \A^1_{\Q}(\M_{1,n}(m)),& \A^1_{\Q}(\NS{n}{m}) &\simeq \A^1_{\Q}(\NM{n}{m}).
\end{align*}
Note that the normalization of the coarse moduli space of a Deligne-Mumford stack is canonically isomorphic to the coarse moduli space of the normalization of the stack, so there is no ambiguity in the definition of $\M_{1,n}(m)^*$.

It is well-known that the tautological classes generate $\Pic_{\Q}(\M_{1,n})$, and we have a complete description of the relations between them. In the following Proposition (and throughout this section), we will use the notation $
[n]_{i}^{j}:=\{ S \subset [n] \,|\, i \leq |S| \leq j \}.
$

\begin{proposition}[$\Q$-Picard group of $\M_{1,n}$]\label{P:StartingPicardGroup}
\begin{enumerate}
\item[]
\item $\Pic_{\Q}(\M_{1,n})$ is freely generated by $\lambda$ and the boundary divisors $\{\delta_{0,S}\}_{S \in [n]_{2}^{n}}$.
\item The following relations hold in $\Pic_{\Q}(\M_{1,n})$:
\begin{align*}
\delta_{irr}&=12\lambda\\
\psi_i&=\lambda+\sum_{i \in S \in [n]_2^{n}} \delta_{0,S}\\
\psi&=n\lambda+\sum_{S \in [n]_2^{n}}|S| \, \delta_{0,S}
\end{align*}
\end{enumerate}
\end{proposition}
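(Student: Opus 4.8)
The plan is to reduce everything to well-known facts about $\M_{1,n}$ in the nodal (stable) setting, since the excerpt asks us to describe $\Pic_\Q(\M_{1,n})$ rather than $\Pic_\Q(\M_{1,n}(m))$. First I would recall the standard presentation: $\M_{1,n}$ is obtained from $\M_{1,1}$ by a sequence of forgetful maps $\M_{1,n} \to \M_{1,n-1} \to \cdots \to \M_{1,1}$, and each step is (up to finite ambiguity at the level of stacks) the universal curve over the previous stage. Since $\M_{1,1}$ has $\Pic_\Q(\M_{1,1}) = \Q\lambda$ (because $\M_{1,1}$ has coarse space $\mathbb{A}^1$, hence trivial rational Picard group, and the only tautological generator is $\lambda$, with $\delta_{irr}=12\lambda$ being the classical relation computed from the discriminant of the Weierstrass cubic), one bootstraps upward: at each stage adding a marked point contributes a new $\psi$-class and the new boundary divisors $\delta_{0,S}$ created by the point bubbling off. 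A clean way to organize this is to invoke Arbarello–Cornalba's computation of $\Pic_\Q(\overline{\mathcal M}_{g,n})$, specialized to $g=1$, which gives that $\Pic_\Q(\M_{1,n})$ is freely generated by $\lambda$, the $\psi_i$, and the boundary classes, subject precisely to the relations expressing $\psi_i$ and $\delta_{irr}$ in terms of $\lambda$ and the $\delta_{0,S}$. Reorganizing that presentation so that the $\psi_i$ are eliminated in favor of the $\delta_{0,S}$ yields exactly statement (1): a free basis $\{\lambda\} \cup \{\delta_{0,S}\}_{S \in [n]_2^n}$.

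For the relations in (2), I would argue as follows. The relation $\delta_{irr} = 12\lambda$ is pulled back along the forgetful map $\M_{1,n} \to \M_{1,1}$ from the classical relation on $\M_{1,1}$ (where $\delta_{irr}$ is the single nodal cubic point and $\lambda$ generates), using that the forgetful map does not create any new non-disconnecting nodes — a node is non-disconnecting on $C$ iff it remains so after forgetting a point (marked points never lie at nodes), so $\delta_{irr}$ on $\M_{1,n}$ is the pullback of $\delta_{irr}$ on $\M_{1,1}$, and $\lambda$ is likewise pulled back. For the $\psi_i$ relation, I would use the standard comparison of $\psi_i$ under the forgetful map $\pi_i : \M_{1,n} \to \M_{1,n\setminus\{i\}}$: one has $\psi_i = \pi_i^*(\text{something}) + (\text{correction by the section})$, but more directly, the self-intersection formula gives $\psi_i$ as a sum of a copy of $\lambda$ (coming from the genus-one part of the fiber) plus the boundary divisors $\delta_{0,S}$ with $i \in S$, which are precisely the loci where $p_i$ has collided with other points onto a rational tail. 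Concretely: compare $\psi_i$ with the dilaton/string-type relation, or just note that on the universal curve $\C \to \M_{1,n-1}$ the relative dualizing sheaf restricted to the section $\sigma_i$ differs from the pullback of a point class by exactly the boundary contributions; iterating gives $\psi_i = \lambda + \sum_{i \in S \in [n]_2^n}\delta_{0,S}$. Summing over $i$ and counting, for each $S$, the number of indices $i \in S$ gives the coefficient $|S|$, yielding $\psi = n\lambda + \sum_S |S|\,\delta_{0,S}$.

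The main obstacle — really the only non-formal input — is establishing that $\Pic_\Q(\M_{1,1}) = \Q\lambda$ with $\delta_{irr} = 12\lambda$, together with the freeness claim in (1). The freeness is the substantive part: it amounts to knowing there are no further relations among $\lambda$ and the $\delta_{0,S}$, which for $g=1$ follows from the Arbarello–Cornalba presentation (or, for small $n$, can be checked by explicit test curves — e.g. moving the point $p_i$ on a fixed elliptic curve to detect $\psi_i$, and degenerations producing single boundary divisors to detect each $\delta_{0,S}$ independently). I would cite Arbarello–Cornalba for the general case and remark that the $g=1$ case is classical, tracing back to the fact that $\M_{1,1}$ is rational with $\Pic_\Q$ one-dimensional. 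Everything else — the relations in (2) — is a routine consequence of the comparison of tautological classes under forgetful morphisms, and I would present those as short computations rather than belaboring them.
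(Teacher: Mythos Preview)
Your proposal is correct in spirit and content, and in fact essentially aligns with the paper's approach: the paper's entire proof is the single line ``See \cite[Theorem 2.2]{AC},'' i.e.\ a citation to Arbarello--Cornalba, which is exactly the reference you invoke for the substantive freeness claim. You have simply unpacked more of the argument (the pullback of $\delta_{irr}=12\lambda$ from $\M_{1,1}$, the $\psi_i$ comparison under forgetful maps, summing to get $\psi$) than the paper bothers to, so there is no real divergence --- if anything, your version is more informative than the paper's bare citation.
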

\begin{proof}
See \cite[Theorem 2.2]{AC}.
\end{proof}

We would like an analogue of Proposition \ref{P:StartingPicardGroup} for $\Pic_{\Q}(\M_{1,n}(m))$. Unfortunately, we do not know whether $\M_{1,n}(m)$ is normal, and this presents a major  obstacle. On the other hand, a description of $\Pic_{\Q}(\NM{n}{m})$ follows easily from Proposition \ref{P:StartingPicardGroup}.
\begin{proposition}[$\Q$-Picard group of $\NM{n}{m}$]\label{P:PicardGroup}
\begin{enumerate}
\item[]
\item The cycle map
$\Pic_{\Q}(\NM{n}{m}) \rightarrow A^{1}_{\Q}(\NM{n}{m})$ is an isomorphism. In particular, $\NM{n}{m}$ is 
$\Q$-factorial.
\item $\Pic_{\Q}(\NM{n}{m})$ is freely generated by $\lambda$ and $\{\delta_{0,S}\}_{S \in [n]_{2}^{n-m}}$.
\item The following relations hold in $\Pic_{\Q}(\NM{n}{m})$:
\begin{align*}
\psi_i&=\lambda+\sum_{i \in S \in [n]_2^{n-m}} \delta_{0,S}\\
\psi&=n\lambda+\sum_{S \in [n]_2^{n-m}}|S| \, \delta_{0,S}
\end{align*}
\end{enumerate}
\end{proposition}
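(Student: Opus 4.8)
The plan is to deduce everything from Proposition \ref{P:StartingPicardGroup} by analyzing the birational contraction $\phi: \M_{1,n} \dashrightarrow \M_{1,n}(m)^*$, or more precisely its locus of definition. The key geometric input is the stratification of Section \ref{S:Stratification}: the strata $\E_l$ for $l \geq 1$ have codimension $l+1 \geq 2$ in $\S_{1,n}(m)$ (Corollary \ref{C:DimStrata}), and the boundary divisor $\Delta_{0,S}$ is non-empty precisely when $2 \leq |S| \leq n-m$. So the first step is to identify the open locus $\M_{1,n}^{\circ} \subset \M_{1,n}$ on which the rational map $\phi$ is an isomorphism onto an open subset $\M_{1,n}(m)^{*,\circ} \subset \M_{1,n}(m)^*$: concretely, $\M_{1,n}^{\circ}$ is the locus of $n$-stable curves which are already $m$-stable (equivalently, which have no elliptic $l$-bridge with $l \leq m$), and the complement of $\M_{1,n}(m)^{*,\circ}$ in $\M_{1,n}(m)^*$ is contained in $\overline{\E_1} \cup \overline{\E_2} \cup \ldots \cup \overline{\E_m}$, which by the codimension count has codimension $\geq 2$. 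Likewise the complement of $\M_{1,n}^{\circ}$ in $\M_{1,n}$ is supported on the boundary divisors $\Delta_{0,S}$ with $|S| > n-m$ together with higher-codimension loci.

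The second step is a standard excision/restriction argument on $\Q$-divisor class groups. Since $\M_{1,n}(m)^*$ is normal, restriction gives an isomorphism $\A^1_{\Q}(\M_{1,n}(m)^*) \xrightarrow{\sim} \A^1_{\Q}(\M_{1,n}(m)^{*,\circ})$ modulo the classes of the (finitely many) irreducible divisorial components of the complement — but here the complement has codimension $\geq 2$, so in fact $\A^1_{\Q}(\M_{1,n}(m)^*) \cong \A^1_{\Q}(\M_{1,n}(m)^{*,\circ})$. Via the isomorphism $\phi$ this is identified with $\A^1_{\Q}(\M_{1,n}^{\circ})$, and excising the extra boundary divisors $\Delta_{0,S}$, $|S| > n-m$, from $\M_{1,n}$ shows $\A^1_{\Q}(\M_{1,n}^{\circ})$ is the quotient of $\A^1_{\Q}(\M_{1,n}) = \Q\langle \lambda, \{\delta_{0,S}\}_{S \in [n]_2^n}\rangle$ by the subspace spanned by $\{\delta_{0,S} : |S| > n-m\}$. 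That quotient is freely generated by $\lambda$ and $\{\delta_{0,S}\}_{S \in [n]_2^{n-m}}$, giving part (2) for $\A^1_{\Q}$. One then checks the cycle map $\Pic_{\Q} \to \A^1_{\Q}$ is an isomorphism: it is injective because $\M_{1,n}(m)^*$ is normal and proper (or simply because $\M_{1,n}$ is smooth, so the tautological line bundles $\lambda$, $\delta_{0,S}$ on $\M_{1,n}(m)^*$ that restrict to nonzero classes are nonzero), and surjective because the generators $\lambda, \delta_{0,S}$ are manifestly classes of line bundles; this yields the $\Q$-factoriality in part (1) and part (2) for $\Pic_{\Q}$. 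Finally, part (3) follows by pulling back the relations of Proposition \ref{P:StartingPicardGroup}(2) along $\phi$ (or restricting to $\M_{1,n}^{\circ}$ and pushing forward): the relations $\psi_i = \lambda + \sum_{i \in S \in [n]_2^n}\delta_{0,S}$ and $\psi = n\lambda + \sum_{S \in [n]_2^n}|S|\,\delta_{0,S}$ hold on $\M_{1,n}$, and restricting to $\M_{1,n}^{\circ}$ kills exactly the terms with $|S| > n-m$ since those $\Delta_{0,S}$ do not meet $\M_{1,n}^{\circ}$, leaving the asserted relations; one must separately note that $\lambda, \psi_i, \psi$ on $\M_{1,n}(m)^*$ pull back to the corresponding classes on $\M_{1,n}$ under $\phi$, which is immediate since $\phi$ is defined by the universal families and these classes are defined by the same tautological recipes.

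The main obstacle, and the point requiring the most care, is the claim that the \emph{indeterminacy locus} of $\phi$ (on both sides) has codimension $\geq 2$, so that no spurious divisor classes are lost or gained. On the $\M_{1,n}(m)^*$ side this is exactly the content of Corollary \ref{C:DimStrata}: the locus of curves with an elliptic $l$-fold point, $l \geq 1$, has codimension $l+1 \geq 2$, and these are the only curves not isomorphic to $n$-stable curves. On the $\M_{1,n}$ side one must verify that $\phi$ restricted to the complement of $\bigcup_{|S| > n-m}\Delta_{0,S}$ in $\M_{1,n}$, minus a further codimension-$\geq 2$ locus, is a genuine morphism (an isomorphism onto its image); this is where one invokes the construction of $m$-stable reduction — an $n$-stable curve with an elliptic $l$-bridge ($l \leq m$) along a separating node of type $S$ with $|S| > n-m$ gets its elliptic tail contracted — together with the fact that a generic point of each such $\Delta_{0,S}$ is contracted to a lower-dimensional stratum, while the boundary divisors $\Delta_{0,S}$ with $|S| \leq n-m$ map birationally to their images. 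All of this is implicit in \cite[Theorem 3.8, Lemma 3.10]{SmythEI}; the proof should simply cite the existence and properties of the $m$-stable reduction morphism and then run the excision bookkeeping above.
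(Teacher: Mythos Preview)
Your proposal is correct and follows essentially the same approach as the paper: both arguments identify the common open locus $U$ where $\phi$ is an isomorphism, use Corollary~\ref{C:DimStrata} to show the complement in $\M_{1,n}(m)^*$ has codimension $\geq 2$, excise the divisors $\{\Delta_{0,S}\}_{|S|>n-m}$ on the $\M_{1,n}$ side, and transfer the description of $\A^1_{\Q}$ and the relations from Proposition~\ref{P:StartingPicardGroup}. The paper is slightly terser and phrases the generators as $\Delta_{irr}$ (then identifies it with $12\lambda$) rather than $\lambda$ directly, but the substance is identical.
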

\begin{proof}
Let $U \subset \M_{1,n}$ be the open set parametrizing $m$-stable curves, and let $\phi: \M_{1,n} \dashrightarrow \M_{1,n}(m)$ be the natural map. Then $\phi|_{U}$ is an isomorphism, and $\phi(U) \subset \M_{1,n}(m)$ is precisely the locus of nodal curves in $\M_{1,n}(m)$. In particular, $\phi(U)$ is smooth and, by Corollary \ref{C:DimStrata}, the codimension of $\M_{1,n}(m) \backslash \phi(U)$ is two.  

Now let $V$ be the maximal open subset on which the birational map $\M_{1,n} \dashrightarrow \M_{1,n}(m)^*$ is regular. Since the complement of $V$ in $\M_{1,n}$ has codimension two, Proposition \ref{P:StartingPicardGroup} gives
$$
\A^{1}_{\Q}(V)  \simeq A^{1}_{\Q}(\M_{1,n}) \simeq \Q\{\Delta_{irr}, \Delta_{0,S}: S \in [n]_{2}^{n}\}.
$$
Evidently, $U \subset V$ and the codimension one points of $V \backslash U$ are precisely the generic points of the divisors $\{\Delta_{0,S}: S \in [n]_{n-m+1}^{n}\}$. Thus, we have an exact sequence
$$\Q\{\Delta_{0,S}: S \in [n]_{n-m+1}^{n}\} \rightarrow \A^{1}_{\Q}(V)  \rightarrow \A^{1}_{\Q}(U) \rightarrow 0.$$
Since all boundary divisors are linearly independent in $\A^{1}_{\Q}(\M_{1,n})$, the map on the left is injective. Thus,
$$
\A^{1}_{\Q}(U) \simeq \Q\{\Delta_{irr}, \Delta_{0,S}: S \subset [n]_2^{n-m} \}. 
$$
Since $\phi|_{U}$ is an isomorphism, and the normalization map $\M_{1,n}(m)^* \rightarrow \M_{1,n}(m)$ is an isomorphism over $\phi(U)$, we have
$$
\A^{1}_{\Q}(\M_{1,n}(m)^*)  \simeq \A^{1}_{\Q}(\phi(U)) \simeq \Q\{\Delta_{irr}, \Delta_{0,S}: S \subset [n]_2^{n-m}\}.
$$
Now consider the map
$$
\Pic_{\Q}(\M_{1,n}(m)^*)  \rightarrow \A^{1}_{\Q}(\M_{1,n}(m)^*) .
$$
It is injective since $\M_{1,n}(m)^*$ is normal. To show that it is surjective, it suffices to see that $\delta_{0,S}$ maps to $\Delta_{0,S}$ and $12\lambda$ maps to $\Delta_{irr}$. This can be checked after restriction $\phi(U)$ since the complement has codimension two. But since $\phi|_{U}$ is an isomorphism, these follow from the corresponding statements on $\M_{1,n}$. Similarly, the stated relations can be checked after restriction to $\phi(U)$, where they follow from the corresponding relations in $\Pic_{\Q}(\M_{1,n})$.
\end{proof}

\begin{remark}
While we do not know whether $\Delta_{irr} \in A^1(\M_{1,n}(m))$ is $\Q$-Cartier, the proof of Proposition \ref{P:PicardGroup} shows that the cycle $\Delta_{irr} \in A^1(\NM{n}{m})$ \emph{is} $\Q$-Cartier with associated line bundle $12\lambda$.
\end{remark}

\subsection{Intersection theory on 1-parameter families}\label{S:1ParameterFormulas}
If $(f: \C \rightarrow B, \sigman)$ is an $n$-pointed $m$-stable curve over a smooth curve $B$, we obtain a classifying map
$$
c: B \rightarrow \S_{1,n}(m),
$$
and we wish to compute the intersection numbers
\begin{align*}
\lambda.B&:=\deg_{B}c^{*}\lambda,\\
\psi_i.B&:=\deg_{B}c^*\psi_i,\\
\delta_{0,S}.B&:=\deg_{B}c^{*}\delta_{0,S},
\end{align*}
in terms of the geometry of the family. Evidently, $\psi_i.B$ and $\delta_{0,S}.B$ may be computed by standard techniques: $\psi_i.B$ is $-\sigma_i^2$ and $\delta_{0,S}.B$ is the number of disconnecting nodes of type $S$ in the fibers of $f$, counted with multiplicity. Furthermore, since the limit of a node of type $S$ is a node of type $S$ in any family of $m$-stable curves, the case where $B \subset \Delta_{0,S}$ is handled in the usual way: normalizing $\C$ along the locus of nodes of type $S$ and letting $\tau_1$, $\tau_2$ be the sections lying over this locus, we have $\delta_{0,S}.B=\tau_1^2+\tau_2^2$.

In this section, we explain how to compute $\lambda.B$ for arbitrary 1-parameter families of $m$-stable curves. First, we consider the special case where the classifying map $c: B \rightarrow \S_{1,n}(m)$ factors through one of the equisingular boundary strata $\E_{l}$, i.e. when every fiber of $f$ has an elliptic $l$-fold point. In this case, we compute $\lambda.B$ as a certain self-intersection on the surface obtained by normalizing along the locus of elliptic $l$-fold points (Proposition \ref{P:LambdaNumber}). Then we use stable reduction to reduce the general case to this special case (Corollary \ref{C:CaseIIFormulas}).

\begin{CaseI}
$c: B \rightarrow \S_{1,n}(m)$ factors through a boundary stratum $\E_{l}$ ($l \geq 1$).
\end{CaseI}

Since $f: \C \rightarrow B$ has a unique elliptic $l$-fold point in each fiber, $f$ admits a section $\tau$ such that $\tau(b) \in C_{b}$ is an elliptic $l$-fold point for each $b \in B$. Let $\pi:\tilde{\C} \rightarrow \C$ be the normalization of $\C$ along $\tau$, and let $\tilde{\tau}_1, \ldots, \tilde{\tau}_l$ by the sections lying over $\tau$.
\begin{proposition}\label{P:LambdaNumber}
With notation as above, $\lambda.B=\tilde{\tau}_{i}^2$ for any $i \in \{1, \ldots, l\}$.
\end{proposition}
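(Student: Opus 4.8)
The plan is to compare the Hodge bundle of the family $f:\C\to B$ with that of its normalization $\tilde f:\tilde\C\to B$ along $\tau$. First I would set up the normalization sequence: since $\tau(b)\in C_b$ is an elliptic $l$-fold point for every $b$, the section $\tau$ lies in the locus where $\C\to B$ acquires an elliptic $l$-fold point, and $\pi:\tilde\C\to\C$ is finite, an isomorphism away from $\tau$, with $\pi^{-1}(\tau)=\tilde\tau_1\cup\cdots\cup\tilde\tau_l$. The conductor/normalization exact sequence reads
\[
0\to\O_{\C}\to\pi_*\O_{\tilde\C}\to\mathcal{Q}\to 0,
\]
where $\mathcal{Q}$ is a sheaf supported on $\tau(B)$. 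Using the local description of the elliptic $m$-fold point (\cite[Lemma 2.2]{SmythEI}, as invoked repeatedly in Section~2.2), the ``seminormality'' of the singularity tells us that $\mathcal{Q}\simeq\tau_*\left(\bigoplus_{i=1}^{l}\tilde\tau_i^*\O_{\tilde\C}/\O_B\right)$ has length $l-1$ along each fiber; more precisely, dualizing, one gets the relation between dualizing sheaves $\pi^*\omega_{\C/B}=\omega_{\tilde\C/B}(\sum_{i=1}^l\tilde\tau_i)$, which is the genus-one analogue of the familiar node formula.

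Next I would push forward and take determinants. Applying $\tilde f_*$ to $0\to\omega_{\tilde\C/B}\to\omega_{\tilde\C/B}(\sum\tilde\tau_i)\to\bigoplus_{i=1}^l\tilde\tau_i^*\omega_{\tilde\C/B}(\textstyle\sum\tilde\tau_j)\to 0$ and identifying $f_*\omega_{\C/B}$ with the subsheaf of $\tilde f_*\omega_{\tilde\C/B}(\sum\tilde\tau_i)$ cut out by the codimension-one ``attaching'' condition (exactly the $\E_z$-type subspace appearing in Proposition~\ref{P:mfoldfamiliesI}(6)), one obtains an exact sequence of the shape
\[
0\to f_*\omega_{\C/B}\to \tilde f_*\omega_{\tilde\C/B}\oplus\O_B\to \bigoplus_{i=1}^l\tilde\tau_i^*\O_{\tilde\C}(-\tilde\tau_i)^{\vee}\big/(\text{codim }1)\to 0,
\]
whose net effect on first Chern classes is
\[
c_1(f_*\omega_{\C/B})=c_1(\tilde f_*\omega_{\tilde\C/B})-c_1(\psi_i),
\]
where $\psi_i:=\tilde\tau_i^*\O_{\tilde\C}(-\tilde\tau_i)$ (and one checks along the way that $\psi_1\simeq\cdots\simeq\psi_l$, since the $e_i$ of Proposition~\ref{P:mfoldfamiliesI} all become isomorphic to $\O_{\P}(1)$ in families, or directly because they are all identified through the single line $\O_B(1)$ quotient). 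Since $\tilde\C$ has arithmetic genus zero fibers (the normalization at the elliptic $l$-fold point disconnects the minimal elliptic subcurve into $l$ rational pieces), $\tilde f_*\omega_{\tilde\C/B}=0$, hence $\lambda.B=\deg c^*\lambda=\deg c_1(f_*\omega_{\C/B})=-\deg\psi_i=-\deg\tilde\tau_i^*\O_{\tilde\C}(-\tilde\tau_i)=\tilde\tau_i^2$.

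The main obstacle I anticipate is pinning down the precise push-forward exact sequence — i.e. verifying that the ``codimension-one attaching subspace'' globalizes over $B$ exactly as in Proposition~\ref{P:mfoldfamiliesI} so that the determinant bookkeeping produces $-c_1(\psi_i)$ and not some twist thereof. This is essentially a relative version of the computation $h^0(\omega)$ for a curve with an elliptic $m$-fold point versus its normalization, and the cleanest route is probably to observe that $f_*\omega_{\C/B}$ and $\tilde f_*\big(\omega_{\tilde\C/B}(\sum\tilde\tau_i)\big)$ differ by the subbundle $\E$ already constructed in Proposition~\ref{P:mfoldfamiliesI}, so that $\det f_*\omega_{\C/B}=\det\tilde f_*\omega_{\tilde\C/B}\otimes\det\E\otimes(\det\bigoplus_i\psi_i)^{-1}$; combined with the tautological sequence $0\to\E\to\bigoplus_i\psi_i\to\O_B(1)\to0$ on $B$ this collapses to $c_1(f_*\omega_{\C/B})=c_1(\tilde f_*\omega_{\tilde\C/B})-c_1(\psi_i)$. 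Everything else — vanishing of $\tilde f_*\omega_{\tilde\C/B}$ and the identity $\deg\psi_i=-\tilde\tau_i^2$ — is routine.
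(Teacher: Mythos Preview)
Your overall strategy --- normalize along $\tau$, compare Hodge bundles, and use that the normalized family has genus-zero fibers so $c_1(\tilde f_*\omega_{\tilde\C/B})=0$ --- is exactly the paper's strategy, and your target identity $c_1(f_*\omega_{\C/B})=c_1(\tilde f_*\omega_{\tilde\C/B})-c_1(\psi_i)$ is correct. But there is a genuine error in the execution.

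The formula $\pi^*\omega_{\C/B}=\omega_{\tilde\C/B}(\sum_i\tilde\tau_i)$ is the \emph{nodal} formula and is \emph{false} for elliptic $l$-fold points. For these singularities the conductor on the normalization is $2q_1+\cdots+2q_l$ (this is what \cite[Lemma 2.2]{SmythEI} encodes), so one only has an inclusion $\omega_{\C/B}\subset\pi_*\omega_{\tilde\C/B}(2\tilde\tau_1+\cdots+2\tilde\tau_l)$, cut out by \emph{two} conditions: (a) the residues sum to zero, and (b) the principal parts define a functional lying in $\Ker\big(\oplus_i(m_{q_i}/m_{q_i}^2)^\vee\to(m_q/m_q^2)^\vee\big)$. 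Consequently your single-step exact sequence cannot be made to work; the correct bookkeeping requires a two-step filtration
\[
\tilde f_*\omega_{\tilde\C/B}\ \subset\ f_*\omega_{\C/B}\cap\tilde f_*\omega_{\tilde\C/B}(\textstyle\sum_i\tilde\tau_i)\ \subset\ f_*\omega_{\C/B},
\]
whose graded pieces $\Lambda'$ and $\Lambda$ are governed by (a) and (b) respectively. The paper checks that $c_1(\Lambda')=0$ (the residue condition contributes a trivial determinant, via $0\to\Lambda'\to\oplus_i\O_B\to\O_B\to0$) and $\Lambda\simeq\L^\vee$ where $\L$ is the invertible quotient in $\tau^*\I_\tau\to\oplus_i\tilde\tau_i^*\I_{\tilde\tau_i}\to\L\to0$. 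It is precisely this second step that produces the $-c_1(\psi_i)$ term.

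Two smaller points. First, your justification that $\psi_1\simeq\cdots\simeq\psi_l$ via Proposition~\ref{P:mfoldfamiliesI} is misplaced: that proposition concerns a specific universal construction over a projective bundle, not an arbitrary $B$. The paper instead proves this directly from \cite[Lemma 2.2(2)]{SmythEI}, by observing that each composition $\tilde\tau_i^*\I_{\tilde\tau_i}\hookrightarrow\oplus_j\tilde\tau_j^*\I_{\tilde\tau_j}\to\L$ is nowhere vanishing, hence an isomorphism. Second, the ``$\O_B(1)$'' in your last paragraph is meaningless over a general base $B$; the line bundle you want is the $\L$ just described.
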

\begin{proof}
Consider the sheaf homomorphism $\tau^*\I_{\tau} \rightarrow \oplus_{i=1}^{l}\tilde{\tau}_i^*\I_{\tilde{\tau}_i}$, whose restriction to the fiber over $b \in B$ is just the map
$$
m_{\tau(b)}/m_{\tau(b)}^2 \rightarrow \oplus_{i=1}^{l}m_{\tilde{\tau}_i(b)}/m_{\tilde{\tau}_i(b)}^2.
$$
Since $\tau(b) \in C_b$ is an elliptic $l$-fold point, \cite[Lemma 2.2 (1)]{SmythEI} implies that this map has a 1-dimensional quotient. Since this holds on every fiber, we have an invertible quotient sheaf $\L$, defined by the exact sequence
$$
\tau^*\I_{\tau} \rightarrow \oplus_{i=1}^{l}\tilde{\tau}_i^*(\I_{\tilde{\tau}_i}) \rightarrow \L \rightarrow 0.
$$
\cite[Lemma 2.2 (2)]{SmythEI} implies that each composition
$$
\tau_{i}^*\I_{\tilde{\tau}_i} \hookrightarrow \oplus_{i=1}^{l}\tilde{\tau}_i^*\I_{\tilde{\tau}_i} \rightarrow \L 
$$
is nowhere vanishing. Since $\tilde{\tau}_{i}^*\I_{\tilde{\tau}_i}$  and $\L$ are invertible, these must be isomorphisms. Thus, we have
$$
\psi_i:= \tilde{\tau}_{i}^*\I_{\tilde{\tau}_i}^{\vee} \simeq \L^{\vee},
$$
for each $i \in \{1, \ldots, l\}$. Note that since $\tilde{f}: \tilde{\C} \rightarrow B$ is a family of genus zero curves, we have $c_1(\tilde{\pi}_*\omega_{\tilde{\C}/B})=0$. Thus, to prove the proposition, it suffices to show
\begin{align*}
c_1(\pi_*\omega_{\C/B})=c_1(\tilde{\pi}_*\omega_{\tilde{\C}/B})+c_1(\L^{\vee}).\\
\end{align*}

To prove this formula, we must recall some facts about the dualizing sheaf of an elliptic $m$-fold pointed curve. If $C$ is a complete curve with an elliptic $l$-fold point $q \in C$, $\pi: \tilde{C} \rightarrow C$ is the normalization of $C$ at $q$, and $q_1, \ldots, q_l \in \tilde{C}$ are the points lying above $q$, we may compare $\omega_{C}$ and $\omega_{\tilde{C}}$ as follows: For any section $\omega \in \omega_{\tilde{C}}(2q_1+ \ldots + 2q_l)$, let $(\omega): \oplus_{i=1}^{l} m_{q_i}/m_{q_i}^2 \rightarrow k,$ denote the linear functional induced by
\begin{align*}
f & \rightarrow \sum_{i=1}^{l}\Res_{q_i}(f\omega), \,\,\, f \in \oplus_{i=1}^{l}m_{q_i}.
\end{align*}
In \cite[Section 2.2]{SmythEI}, we showed that $\omega_{C} \subset \pi_*\omega_{\tilde{C}}(2q_1+ \ldots + 2q_l)$, is precisely the subsheaf of sections satisfying:
\begin{itemize}
\item[(a)] $\sum_{i=1}^{l}\Res_{q_i}\omega=0$, and
\item[(b)] $(\omega) \in \Ker(  \oplus_{i=1}^{l} (m_{q_i}/m_{q_i}^2)^{\vee} \rightarrow (m_{q}/m_{q}^2)^{\vee})$.
\end{itemize}

We make use of this observation by considering the following two-step filtration for $f_*\omega_{\C/B}$:
$$
\tilde{f}_*\omega_{\tilde{\C}/B} \subset f_*\omega_{\C/B} \cap \tilde{f}_*\omega_{\tilde{\C}/B}(\tilde{\tau}_1+\ldots+\tilde{\tau}_l)  \subset  f_*\omega_{\C/B} \cap \tilde{f}_*\omega_{\tilde{\C}/B}(2\tilde{\tau}_1+\ldots+2\tilde{\tau}_l)=f_*\omega_{\C/B}.
$$
Define $\Lambda$ and $\Lambda'$ to be the quotients of this filtration, i.e.
$$
0 \rightarrow \tilde{f}_*\omega_{\tilde{\C}/B} \rightarrow f_*\omega_{\C/B} \cap \tilde{f}_*\omega_{\tilde{\C}/B}(\tilde{\tau}_1+\ldots+\tilde{\tau}_l) \rightarrow \Lambda' \rightarrow 0,$$
$$
0  \rightarrow  f_*\omega_{\C/B} \cap \tilde{f}_*\omega_{\tilde{\C}/B}(\tilde{\tau}_1+\ldots+\tilde{\tau}_l) \rightarrow f_*\omega_{\C/B} \rightarrow  \Lambda \rightarrow 0.
$$
It suffices to show that $c_1(\Lambda')=0$ and $c_1(\Lambda)=c_1(\L^{\vee}).$ To check that $c_1(\Lambda')=0$, consider the sequence
$$
0 \rightarrow \tilde{f}_*\omega_{\tilde{\C}/B} \rightarrow \tilde{f}_*\omega_{\tilde{\C}/B}(\tilde{\tau}_1+\ldots +\tilde{\tau}_l) \rightarrow \oplus_{i=1}^{l}\O_{B},
$$
where we have used the canonical isomorphism $\omega_{\tilde{\C}/B}(\tilde{\tau}_{i})|_{\tilde{\tau}_{i}} \simeq \O_{\tilde{\tau}_i}$ coming from adjunction. Since the map $\tilde{f}_*\omega_{\tilde{\C}/B}(\tilde{\tau}_1+\ldots +\tilde{\tau}_l) \rightarrow \oplus_{i=1}^{l}\O_{B}$ is given by taking residues, condition (a) implies that $\Lambda'$ lies in an exact sequence
$$
0 \rightarrow \Lambda' \rightarrow \oplus_{i=1}^{l}\O_{B}  \rightarrow \O_{B} \rightarrow 0,
$$
where $\oplus_{i=1}^{l}\O_{B}  \rightarrow \O_{B}$ is given summing sections. Thus, $c_1(\Lambda')=0$.

To check $c_1(\Lambda)=c_1(\L^{\vee}),$ consider the sequence
$$
0 \rightarrow \tilde{f}_*\omega_{\tilde{\C}/B}(\tilde{\tau}_1+\ldots +\tilde{\tau}_l) \rightarrow \tilde{f}_*\omega_{\tilde{\C}/B}(2\tilde{\tau}_1+\ldots +2\tilde{\tau}_l) \rightarrow \oplus_{i=1}^{l}\tilde{\tau}_i^{*}\I_{\tilde{\tau}_i}^{\vee},
$$
where we have used the canonical isomorphism $\omega_{\tilde{\C}/B}(2\tilde{\tau}_{i})|_{\tilde{\tau}_{i}} \simeq \I_{\tilde{\tau}_i}^{\vee}|_{\tilde{\tau}_i}$ coming from adjunction. Now condition (b) implies that $\Lambda$ is simply the kernel of the map
$$
 \oplus_{i=1}^{l}\tilde{\tau}_i^*\I_{\tilde{\tau}_i}^{\vee} \rightarrow (\tau^*\I_{\tau})^{\vee} \rightarrow 0,
$$
i.e. $\Lambda \simeq \L^{\vee}$ as desired.
\end{proof}

\begin{example} 
Recall that the connected components of $\E_{l}$ are parametrized by partitions of $[n]$ (Proposition \ref{P:BoundaryStratification}). 
Given a partition $\Sigma=\{S_{1}, \ldots, S_{l}\}$, with $S_1, \ldots, S_k$ satisfying $|S_i| \geq 2$ and $|S_{k+1}|=\ldots=|S_{l}|=1$, the associated connected component $\E_{\Sigma}$ is simply the projective bundle
$$
\P(\psi_1 \oplus \ldots \oplus \psi_k) \rightarrow \M_{0,|S_1|+1} \times \ldots \times \M_{0,|S_k|+1}.
$$
Let $B=\P^{1}$ be a generic fiber of this projective bundle and let $(f: \C \rightarrow B, \sigman)$ be the associated family of $m$-stable curves. We will compute the intersection numbers $\psi_i.B$, $\delta_{0,S}.B$, $\lambda.B$ for this family.

Unwinding the construction of $\E_{\Sigma}$ in Proposition \ref{P:mfoldfamiliesII}, we find that $\C \rightarrow B$ can be explicitly described as follows. For each $i=1, \ldots, k$, choose a point $z_i \in \P^{1}$ and a smooth genus zero stable curve $(C_i, \{p_j\}_{j=1}^{|S_i|}, q_i)$, and let
\begin{align*}
&\tilde{\C_i}=\text{Blow-up of $C_i \times \P^1$ at $(q_i,z_i)$}.
\end{align*}
Let $\{\sigma_j\}_{j=1}^{|S_i|}$ and $\tau_i$ be the strict transforms of the sections $\{p_j\} \times \P^{1}$ and $\{q_i\} \times \P^{1}$ so that we obtain a family of $(|S_i|+1)$-pointed genus zero curves $(\C_i \rightarrow B, \{\sigma_j\}_{j \in S_i}, \tau_i)$.

In addition, for $i=k+1, \ldots, l$, let
\begin{align*}
&\C_i:=\P(\O_{\P_1} \oplus \O_{\P_1}(1)) \rightarrow B = \P^1
\end{align*}
and label a pair of disjoint sections with self-intersections $1$ and $-1$ by $\sigma_j$ $(j \in S_i)$, and $\tau_i$ respectively, so that we obtain a 
two-pointed family of genus zero curves $(\C_i \rightarrow B, \{\sigma_j\}_{j \in S_i}, \tau_i).$

The family $(\C \rightarrow B, \sigman)$ is constructed by gluing $\{\C_i\}_{i=1}^{l}$ along the sections $\tau_1, \ldots, \tau_l$. The gluing data corresponds to a one-dimensional quotient of the vector bundle $\oplus_{i=1}^{l}  \tau_i^*\O_{\C_i}(-\tau_i)$ which is constructed as follows. For each $i=1, \ldots, l$, we have an isomorphism $\tau_i^*\O_{\C_i}(-\tau_i) \simeq \O_{\P}(1)$, and taking the direct sum of these maps gives the quotient $\oplus_{i=1}^{l}  \tau_i^*\O_{\C_i}(-\tau_i) \rightarrow \O_{\P}(1) \rightarrow 0$.

Since the sections $\tau_1, \ldots, \tau_l$ lying above the locus of elliptic $l$-fold points each have self-intersection $-1$, Proposition \ref{P:LambdaNumber} implies that $\lambda.B=-1$. The remaining intersection numbers are apparent from the construction:

\begin{align*}
\lambda.B&=-1\\
\delta_{0,S}.B&=
\begin{cases}
1&\text{ if } S \in \{S_1, \ldots, S_k\}\\
0& \text{ otherwise }\\
\end{cases}\\
\psi_{i}.B&=
\begin{cases}
-1&\text{ if $i \in  \{S_{k+1}, \ldots, S_{l}\}$}\\
0 & \text{ otherwise.}\\
\end{cases}
\end{align*}

Note that neither $\lambda$ nor $\psi_i$ is nef on $\S_{1,n}(m)$ whereas both are nef on $\S_{1,n}$.
\end{example}

\begin{CaseII}
$c: B \rightarrow \S_{1,n}(m)$ does not factor through any boundary stratum $\E_{l}$.
\end{CaseII}

We reduce to Case I as follows: Suppose the generic fiber of $\C \rightarrow B$ contains an elliptic $l$-fold point. (If the generic fiber is smooth or nodal, take $l=0$.) Outside a finite set of fibers, $(f:\C \rightarrow B, \sigman)$ is $l$-stable, so (after a finite base change) there exists family of $l$-stable curves $(g:\D \rightarrow B, \{\sigma_i\}_{i=1}^{n})$, and a birational map $\D \dashrightarrow \C$ over $B$. We obtain a commutative diagram
\[
\xymatrix{
&B \ar[dl]_{c_l} \ar[dr]^{c_m}&\\
\S_{1,n}(l) \ar@{-->}[rr]&& \S_{1,n}(m)
}
\]
where $c_l$ is the classifying map associated to the $l$-stable family and $c_m$ is the classifying map associated to the $m$-stable family. We will use the notation
\begin{align*}
\lambda^{m}.B&:=\deg_Bc_m^*\lambda&\lambda^{l}.B&:=\deg_Bc_l^*\lambda\\
\psi^{m}.B&:=\deg_{B} c_{m}^{*}\psi&\psi^{l}.B&:=\deg_{B} c_{l}^{*}\psi\\
\delta_0^{m}.B&:=\deg_Bc_{m}^*\delta_0&\delta_0^{l}.B&:=\deg_Bc_{l}^*\delta_0.
\end{align*}
Since the boundary stratum $\E_{l} \subset \SV{n}{l}$ is closed, the image $c_l(B)$ lies entirely in $\E_{l}$ and we can compute $\lambda^{l}.B$ as in Case I. The intersection number we are after are $\lambda^{m}.B$, so we are left with the problem of computing the difference $\lambda^{m}.B-\lambda^{l}.B$. We will explain how to compute this difference in terms of the explicit sequence of blow-ups and contractions that transforms the fibers of $\D \rightarrow B$ into the fibers of $\C \rightarrow B$.
 
For simplicity, let us assume that the generic fiber of $\C$ has no disconnecting nodes, and that $\D$ and $\C$ are isomorphic away from the fiber over a single point $b \in B$.

\begin{claim} There exists a diagram
\[
\xymatrix{
&\B_{0} \ar[dl]_{p_0} \ar[dr]^{q_0}&&\B_{1}  \ar[dl]_{p_1} \ar[dr]^{q_1}&&&&\B_{k}  \ar[dl]_{p_k} \ar[dr]^{q_k}\\
\C_{0}&&\C_{1}&&\C_{2}& \cdots &\C_{k-1}&&\C_{k}
}
\]
satisfying
\begin{enumerate}
\item $\C_{0} \rightarrow \D$ is the desingularization of $\D$ at the disconnecting nodes of $D_b$.
\item $\C_{k} \rightarrow \C$ is the desingularization of $\C$ at the disconnecting nodes of $C_b$.
\item $p_i$ is the blow-up of $\C_{i}$ at a collection of smooth points of $\C_{i}$, namely the marked points of the minimal elliptic subcurve of $(\C_{i})_b$. 
\item $q_i$ is a birational contraction with $\Exc(q_i)=E_i$, where $E_i$ is the minimal elliptic subcurve of $(\B_{i})_b$. 
\end{enumerate}
(Recall that the \emph{minimal elliptic subcurve} of a Gorenstein genus one curve $C$ is the unique connected genus one subcurve $E \subset C$ such that $E$ has no disconnecting nodes \cite[Lemma 3.1]{SmythEI}.)
\end{claim}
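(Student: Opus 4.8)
The strategy is to produce the chain of birational models by a ``peeling'' procedure: starting from the $l$-stable family and the $m$-stable family, we desingularize both at their disconnecting nodes in the special fiber so that all the action is concentrated on the minimal elliptic subcurve, then interpolate by alternately blowing up the marked points of the current minimal elliptic subcurve and contracting it. First I would make the two end models. Let $\C_0 \to \D$ be the minimal resolution of the (finitely many, ordinary-node) singularities of the total space $\D$ lying over the disconnecting nodes of $D_b$; since these are $A_1$-singularities of the surface $\D$, the exceptional locus consists of $(-2)$-curves and $\C_0 \to B$ is still a family of genus-one curves whose special fiber has the same minimal elliptic subcurve $E$ as $D_b$ (the genus-one piece is untouched, only the chains of rational bridges are modified). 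Symmetrically, let $\C_k \to \C$ be the analogous resolution over the disconnecting nodes of $C_b$. This establishes (1) and (2); the birational map $\D \dashrightarrow \C$ over $B$ lifts to a birational map $\C_0 \dashrightarrow \C_k$ which is an isomorphism away from $(\C_0)_b$ and $(\C_k)_b$.

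Next I would analyze this birational map of surfaces. Since $\C_0$ and $\C_k$ are smooth surfaces, proper over $B$, isomorphic away from a single fiber, the map $\C_0 \dashrightarrow \C_k$ factors as a sequence of blow-ups and blow-downs at points (the standard factorization of a birational map of smooth surfaces through a common resolution). The content of the claim is that this sequence can be organized into the alternating pattern $p_i$ (blow up the marked points of the minimal elliptic subcurve of $(\C_i)_b$) followed by $q_i$ (contract that minimal elliptic subcurve). The key input is the structure of the minimal elliptic subcurve: by \cite[Lemma 3.1]{SmythEI} the minimal elliptic subcurve $E_i \subset (\C_i)_b$ is a connected genus-one curve with no disconnecting nodes, so it is either smooth, or irreducible with a cusp/tacnode/etc., or a cycle of $\P^1$'s — in all cases it meets the rest of $(\C_i)_b$ in a set of ``attaching'' smooth points. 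Blowing up exactly those attaching points (this is what $p_i$ does, together with blowing up the marked points that lie on $E_i$ when we want to move them off) separates $E_i$ off as a $(-1)$-configuration, and then contracting $E_i$ — which is exactly the operation $\omega^{\text{high power}}$ performs, by \cite[Lemma 2.12]{SmythEI} or the corresponding results in \cite{SmythEI} — replaces it by a Gorenstein genus-one singularity, i.e.\ produces $(\C_{i+1})_b$. I would verify that at each stage $q_i$ is indeed a birational contraction with $\Exc(q_i) = E_i$ the minimal elliptic subcurve, using that contracting the genus-one subcurve raises the number of branches at the resulting singular point by the number of attaching points and is the unique such contraction compatible with the family being a family of genus-one curves.

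The main obstacle — and where the real work lies — is showing that this process \emph{terminates and matches up}: that after finitely many steps $p_0,q_0,\dots,p_{k-1},q_{k-1}$ we arrive precisely at $\C_k \to \C$, rather than at some other model. The right way to control this is to track a discrete invariant that strictly decreases, for instance the arithmetic genus drop or, more precisely, the number of singular points lying on the minimal elliptic subcurve together with the ``level'' $m$ of the elliptic singularity that a suitable line-bundle power will contract to: each contraction $q_i$ strictly increases the multiplicity of the elliptic $l$-fold point being created (passing from $l$-stable towards $m$-stable data), and since $l < m$ this can happen only finitely many times, terminating exactly when the fiber has become $m$-stable, i.e.\ when we reach $\C_k$. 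One must also check that no ``extra'' blow-ups or blow-downs outside the elliptic subcurve are needed — this follows because $\C_0 \dashrightarrow \C_k$ is already an isomorphism on the complement of the minimal elliptic subcurve and its attaching points (the rational bridges were normalized away in steps (1),(2)), so the entire factorization is supported there. Assembling these observations — the two-sided desingularization, the factorization of a birational map of smooth surfaces, the local structure of the minimal elliptic subcurve, and the monotonicity of the elliptic multiplicity — gives the diagram asserted in the claim.
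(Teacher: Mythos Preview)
Your overall picture is right---alternately blow up marked points on the minimal elliptic subcurve and contract it---but the scaffolding you build around it has real gaps.

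First, the smooth-surface factorization framework does not apply. You write that $\C_0$ and $\C_k$ are smooth surfaces because the disconnecting nodes of the special fiber are $A_1$-singularities of the total space. Neither claim is correct: the total space can have an $A_k$-singularity (any $k$) at a node of a fiber, and more importantly $\C_0$ is only desingularized at the \emph{disconnecting} nodes of $D_b$. The special fiber of $\D$ still has its elliptic $l$-fold point (or its non-disconnecting nodes, if $l=0$), and the total space of $\C_0$ is typically singular along that locus. So ``factor a birational map of smooth surfaces as a sequence of point blow-ups'' is not available, and in any case the contraction $q_i$ is not the blow-down of a $(-1)$-curve: its exceptional locus $E_i$ is an entire arithmetic-genus-one curve. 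The paper does not use surface factorization at all; it constructs the $q_i$ directly by exhibiting a nef line bundle on $\B_i$ that is trivial exactly on $E_i$ (namely $\omega_{\B_i/B}(E_i+2\sum\sigma_j)$) and invoking \cite[Lemma 2.12]{SmythEI}. You gesture at Lemma 2.12 but never name the line bundle, and without it there is no reason the contraction exists as a flat family over $B$.

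Second, your termination and matching argument is not right. You claim each $q_i$ strictly increases the multiplicity of the elliptic point; this is not the invariant that works, and it is not obviously monotone. The paper instead tracks the number of disconnecting nodes in the special fiber, which does strictly decrease at each step (the contraction absorbs $n_i+m_i$ disconnecting nodes while the blow-up created only $n_i$). Once there are no elliptic $j$-bridges with $j\le m$, one blows down the remaining semistable $\P^1$'s and checks the fiber is $m$-stable. The crucial final step---why this agrees with the given $\C$---is the \emph{separatedness} of $\S_{1,n}(m)$: uniqueness of $m$-stable limits forces the endpoint to be $\C$. You never invoke this, so your ``matches up'' assertion is unjustified.

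(A smaller point: $p_i$ blows up the \emph{marked} points lying on the minimal elliptic subcurve, not the attaching points to the rest of the fiber; your description conflates the two.)
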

\begin{proof}
This diagram is constructed precisely as in the proof of the valuative criterion for $\S_{1,n}(m)$ (see \cite[Theorem 3.11]{SmythEI} and \cite[Figure 5]{SmythEI}). For the convenience of the reader, we recall the argument. Given $\C_i$, we may certainly blow-up along the collection of marked points of the minimal elliptic subcurve of $(\C_i)_b$ to obtain $p_i$. To construct $q_i$, it suffices by \cite[Lemma 2.12]{SmythEI} to exhibit a nef line bundle on $\B_i$ which has degree zero precisely on the minimal elliptic subcurve $E_i \subset (\B_i)_b$. One easily checks that the line bundle $\omega_{\B_i/B}(E_i+2\Sigma_{i=1}^{n} \sigma_i)$ satisfies this condition.

It only remains to check that, after finitely-many steps, we arrive at the desingularization of the $m$-stable limit. To see this, one first checks (as in Step 2 of the proof of \cite[Theorem 3.11(1)]{SmythEI}) that the contraction $q_i$ replaces $E_i$ by an elliptic $l_i$-fold point where $l_i:=|E_i \cap E_i^{c}|$ and that number of disconnecting nodes in $\C_{i+1}$ is less than the number of disconnecting nodes in $\C_i$. This implies that after finitely many steps, we arrive at a special fiber of $\C_{i+1}$ which has no elliptic $j$-bridge ($j \leq m$) and has only nodes and elliptic $j$-fold points ($j \leq m$) as singularities. Letting $\C_{k} \rightarrow \C$ denote the morphism obtained by blowing down all semistable chains of $\P^{1}$'s, one checks (as in Step 3 of the proof of \cite[Theorem 3.11(1)]{SmythEI}) that the special fiber of $\C$ is $m$-stable. By uniqueness of $m$-stable limits, the resulting family of $m$-stable curves must be the family $(\C \rightarrow B, \sigman)$.
\end{proof}

Fixing a diagram as above, let $F_i$ be the minimal elliptic subcurve of the fiber $(\C_{i})_b$, and define\begin{align*}
n_i:=&|\{ \sigma_i | \sigma_i(b) \in F_i \}|,\\
m_i:=&|F_i \cap \overline{(\C_i)_b \backslash F_i}|,\\
l_i:=&n_i+m_i.
\end{align*}
We call $l_i$ the \emph{level} of the minimal elliptic subcurve $F_i \subset (\C_i)_b$. With this notation, we can record formulae not only for the difference $\lambda^{m}.B-\lambda^{l}.B$, but also for $\psi^{m}.B-\psi^{l}.B$ and $\delta_0^{m}.B-\delta_0^{l}.B$.
\begin{proposition}\label{P:IntersectionNumbers}
With notation as above, we have
\begin{align*}
\lambda^{m}.B-\lambda^{l}.B&=k\\
\psi^{m}.B-\psi^{l}.B&=\sum_{i=0}^{k-1}n_i\\
\delta_0^{m}.B-\delta_0^{l}.B&=-\sum_{i=0}^{k-1}m_i\\
(\psi^{m}-\delta_0^m).B-(\psi^{l}-\delta_0^l).B&=\sum_{i=1}^{k-1}l_i
\end{align*}
\end{proposition}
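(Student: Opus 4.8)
The plan is to compute each of the four differences by following the relevant tautological class through the factorization $\C_{0}\leftarrow\B_{0}\rightarrow\C_{1}\leftarrow\cdots\rightarrow\C_{k}$ supplied by the Claim, together with the two outer desingularizations $\C_{0}\rightarrow\D$ and $\C_{k}\rightarrow\C$. The point is that all three of $\lambda.B=\deg_{B}(f_{*}\omega_{\C/B})$, $\psi.B=\sum_{i}\psi_{i}.B=\sum_{i}(-\sigma_{i}^{2})$, and $\delta_{0}.B=(\text{number of disconnecting nodes of type }S\text{ with }|S|\geq 2\text{, counted with multiplicity})$ are intrinsic to the surface-with-sections, so they can be tracked arrow by arrow. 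The target identities will follow once we show: (i) the two outer maps change none of $\lambda,\psi,\delta_{0}$; (ii) each blow-up $p_{i}$ leaves $\lambda$ and $\delta_{0}$ fixed and raises $\psi$ by $n_{i}$; (iii) each contraction $q_{i}$ leaves $\psi$ fixed, raises $\lambda$ by exactly $1$, and lowers $\delta_{0}$ by exactly $m_{i}$. Summing over $i$ and identifying the ends of the chain with the $l$- and $m$-stable quantities will then give the first three formulas, and subtracting them will give the fourth via $n_{i}+m_{i}=l_{i}$.

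For the outer maps: $\C_{0}\to\D$ and $\C_{k}\to\C$ are isomorphisms near every section $\sigma_{i}$ (disconnecting nodes are disjoint from the marked points), so they do not touch $\psi$; they are crepant resolutions of the canonical type-$A$ surface singularities lying over the disconnecting nodes of the central fibre, so $\rho^{*}\omega_{\D/B}=\omega_{\C_{0}/B}$ and $\rho_{*}\O_{\C_{0}}=\O_{\D}$ give, by the projection formula, $\deg_{B}(f_{\C_{0}*}\omega_{\C_{0}/B})=\lambda^{l}.B$ (similarly for $\C_{k}$); and resolving an $A_{n-1}$-point over a disconnecting node of type $S$ replaces a node of multiplicity $n$ in $\delta_{0,S}.B$ by a chain of rational curves strung along $n$ nodes of type $S$, each of multiplicity one, so the disconnecting-node count is unchanged. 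Thus $\lambda^{l},\psi^{l},\delta_{0}^{l}$ may be read off from $\C_{0}$ and $\lambda^{m},\psi^{m},\delta_{0}^{m}$ from $\C_{k}$.

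For the interior arrows: for the blow-up $p_{i}\colon\B_{i}\to\C_{i}$ at the $n_{i}$ marked points of the minimal elliptic subcurve $F_{i}\subset(\C_{i})_{b}$, the relations $p_{i}^{*}\omega_{\C_{i}/B}=\omega_{\B_{i}/B}(-F_{i})$ and $p_{i*}\O_{\B_{i}}(F_{i})=\O_{\C_{i}}$ give $\lambda$ unchanged; the $n_{i}$ sections through a blown-up point each have self-intersection dropping by one (the rest are untouched), so $\psi$ increases by $n_{i}$; and the only new disconnecting nodes are the $n_{i}$ nodes carrying the exceptional curves, each of type $\{j\}$ with $|\{j\}|=1$, hence not counted in $\delta_{0}$. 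For the contraction $q_{i}\colon\B_{i}\to\C_{i+1}$ of the minimal elliptic subcurve $E_{i}\subset(\B_{i})_{b}$, every section misses $E_{i}$ (the marked points formerly on $F_{i}$ now sit on the exceptional curves), so $q_{i}$ is a local isomorphism near each $\sigma_{i}$ and $\psi$ is unchanged; using $q_{i}^{*}\omega_{\C_{i+1}/B}=\omega_{\B_{i}/B}(E_{i})$ (which itself follows from adjunction on $\B_{i}$ and the arithmetic-genus-one hypothesis, as $\deg\omega_{E_{i}}=0$ forces discrepancy $-1$) together with $\omega_{\B_{i}/B}(E_{i})|_{E_{i}}=\omega_{E_{i}}$, one has
\[
0\rightarrow\omega_{\B_{i}/B}\rightarrow\omega_{\B_{i}/B}(E_{i})\rightarrow\omega_{E_{i}}\rightarrow 0 ,
\]
and pushing forward to $B$, since $E_{i}$ has arithmetic genus one $f_{\B_{i}*}\omega_{E_{i}}$ is a length-one skyscraper at $b$, whose connecting map into the torsion-free line bundle $R^{1}f_{\B_{i}*}\omega_{\B_{i}/B}\cong\O_{B}$ vanishes; hence $\deg_{B}(f_{\C_{i+1}*}\omega_{\C_{i+1}/B})=\deg_{B}(f_{\B_{i}*}\omega_{\B_{i}/B})+1$, i.e.\ $\lambda$ jumps by exactly one. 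Finally, the $l_{i}=n_{i}+m_{i}$ nodes of $(\B_{i})_{b}$ lying on $E_{i}$ are all absorbed into the new elliptic $l_{i}$-fold point; $n_{i}$ of them are the type-$\{j\}$ nodes just created (uncounted), while the remaining $m_{i}$ join $E_{i}$ to the genus-zero tails of $F_{i}$ and have type of size at least two, so $\delta_{0}$ drops by exactly $m_{i}$, with no disconnecting nodes created or destroyed elsewhere.

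Adding the contributions of $p_{0},q_{0},\dots,p_{k-1},q_{k-1}$ and matching endpoints via the second paragraph yields $\lambda^{m}.B-\lambda^{l}.B=k$, $\psi^{m}.B-\psi^{l}.B=\sum_{i}n_{i}$, $\delta_{0}^{m}.B-\delta_{0}^{l}.B=-\sum_{i}m_{i}$, and subtracting the last two, $(\psi^{m}-\delta_{0}^{m}).B-(\psi^{l}-\delta_{0}^{l}).B=\sum_{i}(n_{i}+m_{i})=\sum_{i}l_{i}$. The step I expect to demand the most care is the $\delta_{0}$ bookkeeping at the contraction: one must check that every node of $(\B_{i})_{b}$ lying on $E_{i}$ other than the $n_{i}$ newly created type-$\{j\}$ nodes really has type of size $\geq 2$ (so the count drops by exactly $m_{i}$, not less) and lies in the relevant range $|S|\leq n-l$, keeping track of the fact that the class $\delta_{0}$ involves different ranges of $S$ on $\overline{M}_{1,n}(l)$ and $\overline{M}_{1,n}(m)$; the other subtle point is the vanishing of the connecting homomorphism in the $\lambda$-jump, which is precisely where the genus-one hypothesis on $E_{i}$ enters.
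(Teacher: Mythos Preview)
Your proof is correct and follows essentially the same approach as the paper's: both track $\lambda$, $\psi$, and $\delta_0$ through the chain $\C_0\leftarrow\B_0\rightarrow\cdots\rightarrow\C_k$, using the exact sequence $0\to\omega_{\B_i/B}\to\omega_{\B_i/B}(E_i)\to\omega_{\B_i/B}(E_i)|_{E_i}\to 0$ for the $\lambda$-jump, self-intersections of sections for $\psi$, and a count of disconnecting nodes for $\delta_0$. One small slip: in your blow-up step you write $\omega_{\B_i/B}(-F_i)$ with $F_i$ the minimal elliptic subcurve of $(\C_i)_b$, but you mean the exceptional divisor of $p_i$ (the paper calls these $R_1,\dots,R_{n_i}$); also, your worry about the range of $|S|$ in $\delta_0$ is unnecessary, since stability forces every disconnecting node in $\D_b$ and $\C_b$ (hence in $\C_0$ and $\C_k$) to have type of size $\ge 2$ and within the relevant range.
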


\begin{proof}
Let $g^i$ denote the structure morphism $g^i:\C_i \rightarrow B$, and $h^i$ the structure morphism $h^i:\B_i \rightarrow B$. For the first formula, we must show that
$$
c_1(f_*\omega_{\C/B})= c_1(g_*\omega_{\D/B})+k.
$$
Note that since the desingularization maps $\C_0\rightarrow \D$ and $\C_k \rightarrow \C$ are obtained by resolving $A_{k}$-singularities, we have
\begin{align*}
g_*\omega_{\D/B}&=g^0_*\omega_{\C_0/B}\\
 f_*\omega_{\C/B}&=g^k_*\omega_{\C_k/B}
\end{align*}
Thus, it is enough to show that for each $i=0, \ldots, k-1$,
$$
c_1 (g^{i+1}_*\omega_{\C^i/B})=c_1 (g^{i}_*\omega_{\C^{i-1}/B})+1.
$$
Let $R_1, \ldots, R_{n_i}$ be the exceptional divisors of the blow-up $p_i$ and let $E_i$ be the exceptional divisor of the contraction $q_i$, i.e. the minimal elliptic subcurve of $(\B_{i})_b$. We claim that
\begin{align*}
 p_i^*\omega_{\C_{i}/B}&=\omega_{\B_{i}/B}(-\Sigma R_i), \\
 q_i^*\omega_{\C_{i+1}/B}&=\omega_{\B_{i}/B}(E_i).
\end{align*}
The first formula is clear since $p_i$ is a simple blow-up. For the second formula, note that $q_i^*\omega_{\C_{i+1}/B}=\omega_{\B_{i}/B}(D)$ where $D$ is the unique Cartier divisor supported on $E_i$ such that $\omega_{\B_{i}/B}(D)|_{E_i} \simeq \O_{E_i}$. Clearly, $D=E_i$ since $\omega_{\B_{i}/B}(E_i)|_{E_i} \simeq \omega_{E_i} \simeq \O_{E_i}$. From these formulas, it follows that 
\begin{align*}
g^{i}_*\omega_{\C_{i}/B}&=h^{i}_*\omega_{\B_{i}/B},\\
g^{i+1}_*\omega_{\C_{i+1}/B}&=h^{i}_*\omega_{\B_{i}/B}(E_i).
\end{align*}
Thus, to compare $g^{i}_*\omega_{\C^i/B}$ and $g^{i+1}_*\omega_{\C^{i+1}/B}$, we consider the exact sequence on $\B_{i}$:
$$
0 \rightarrow \omega_{\B_{i}/B} \rightarrow \omega_{\B_{i}/B}(E_i) \rightarrow \O_{E_i} \rightarrow 0.
$$
Pushing forward, we obtain
$$
0 \rightarrow h^{i}_*\omega_{\B_{i}/B} \rightarrow h^{i}_*\omega_{\B_{i}/B}(E_i) \rightarrow h^{i}_*\O_{E_i} \rightarrow 0,
$$
where we have used the fact that the connecting homomorphism $h^{i}_*\O_{E_i} \rightarrow R^1h^{i}_*\omega_{\B_{i}/B}$ is zero, since $h^{i}_*\O_{E_i}\simeq k(b)$ is torsion, while $R^1h^{i}_*\omega_{\B_{i}/B}$ is locally free. We conclude that
$$
c_1 (h^{i}_*\omega_{\B_{i}/B}) = c_1( h^{i}_*\omega_{\B_{i}/B}(E_i))+1,
$$
which implies
$$
c_1 (g^{i}_*\omega_{\C_{i}/B}) = c_1( g^{i+1}_*\omega_{\C_{i+1}/B})+1,
$$
as desired.

To prove the formula relating $\psi^l.B$ and $\psi^m.B$, let us define $\{\sigma_i^j\}_{i=1}^{n}$ to be the strict transform of the sections  $\{\sigma_i\}_{i=1}^{n}$ on $\C_{j}$. Since the desingularization maps $\C_0 \rightarrow \D$ and $\C_k \rightarrow \C$ are isomorphisms in a neighborhood of the sections and hence do not effect the sum of the self-intersections, we have
\begin{align*}
\psi^{l}.B&=-\sum_{i=1}^{n} (\sigma_i^0)^2,\\
\psi^{m}.B&=-\sum_{i=1}^{n} (\sigma_i^k)^2.
\end{align*}
Thus, it suffices to show that for $j=0, 1, \ldots, k-1$
$$
\sum_{i=1}^{n}(\sigma_i^{j+1})^2-\sum_{i=1}^{n}(\sigma_i^{j})^2=-n_j.
$$
To see this, simply note that blow-up $p_j$ is supported along $n_j$ marked points, and the self-intersections of the strict transforms of the corresponding sections each decrease by one. On the other hand, the contraction $q_j$ is an isomorphism in a neighborhood of the sections and hence does not affect their self-intersections.

To prove the formula relating $\delta_0^l.B$ and $\delta_0^m.B$, let us define $\delta^i$ to be the number of disconnecting nodes in the fibers of $\C_i \rightarrow B$. Since the desingularization maps $\C_0 \rightarrow \D$ and $\C_k \rightarrow \C$ introduce $d-1$ nodes into the special fiber for each node counted with multiplicity $d$ in $\delta^l_0.B$ and $\delta^m_0.B$ respectively, we have
\begin{align*}
\delta_0^l.B&=\delta^0,\\
\delta_0^m.B&=\delta^k.
\end{align*}
Thus, it suffices to show that for $i=0, 1, \ldots, k-1$
$$
\delta^{i+1}-\delta^i=-m_i.
$$
To see this, note that the blow-up $p_i$ introduces $n_i$ disconnecting nodes into the special fiber, but the contraction $q_i$ absorbs $n_i+m_i$ disconnecting nodes into an elliptic $(n_i+m_i)$-fold point. Thus, there are $m_i$ fewer nodes in $(\C_{i+1})_b$ than in $(\C_i)_b$.

The final formula is an obvious consequence of the preceding two.
\end{proof}

This analysis clearly extends to the case when $\D \dashrightarrow \C$ is an isomorphism away from multiple fibers, since we can perform the necessary blow-ups and contractions on each fiber individually.
\begin{corollary}\label{C:CaseIIFormulas}
Suppose that $(f: \C \rightarrow B, \sigman)$ is a family of $m$-stable curves and $(g: \D \rightarrow B, \sigman)$ is a family of $l$-stable curves with $l<m$. Suppose that the generic fiber of $f$ has no disconnecting nodes, and that there is a birational morphism $\D \dashrightarrow \C$, so that $\D$ and $\C$ are isomorphic away from the fibers over $b_1, \ldots, b_t \in B$. Then we have
\begin{align*}
\lambda^{m}.B&=\lambda^{l}.B+\sum_{i=1}^{t}k_i\\
(\psi^m-\delta_{0}^m).B&=(\psi^l-\delta_{0}^l).B+\sum_{i=1}^{t}\sum_{j=1}^{k_i}l_{ij}
\end{align*}
where $k_i$ is the number of blow-ups/contractions required to transform the fiber $\D_{b_i}$ into $\C_{b_i}$, and $l_{ij}$ is the level of the elliptic bridge contracted in the $j^{th}$ step of this transformation.
\end{corollary}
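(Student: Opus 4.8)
The plan is to reduce this multi-fiber statement to the single-fiber computation of Proposition~\ref{P:IntersectionNumbers}, exploiting the \emph{locality} of all the birational modifications involved. The key observation is that every operation appearing in the Claim preceding Proposition~\ref{P:IntersectionNumbers}---the desingularization of $\D$ (or of $\C$) at the disconnecting nodes of the special fiber, the blow-ups $p_i$ along the marked points of the minimal elliptic subcurve, and the contractions $q_i$ of that subcurve---is supported set-theoretically over the single bad point, and hence is an isomorphism over all of $B$ except that one fiber. Consequently, when there are several bad fibers $b_1,\dots,b_t$ the corresponding chains of modifications over distinct $b_i$ do not interact, and may be carried out one bad fiber at a time; the Claim itself applies in a neighborhood of each $b_i$ because its construction and proof only examine a neighborhood of the fiber in question.

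Concretely, I would first form $\C_0 \to \D$ by simultaneously resolving the surface singularities of $\D$ at the disconnecting nodes of $D_{b_1},\dots,D_{b_t}$. Then, for $i=1,\dots,t$ in turn, I apply the Claim over $b_i$ to the family produced so far---which still agrees with $\C_0$ near $b_i$, since the earlier modifications were supported over $b_1,\dots,b_{i-1}$---obtaining a chain of $k_i$ blow-ups and contractions that alters only the fiber over $b_i$. Concatenating these chains for $i=1,\dots,t$ produces a single diagram
\[
\C_0 \xleftarrow{p_0} \B_0 \xrightarrow{q_0} \C_1 \xleftarrow{p_1} \cdots \xrightarrow{q_{k-1}} \C_k,\qquad k:=\textstyle\sum_{i=1}^{t}k_i,
\]
in which the first $k_1$ steps modify the fiber over $b_1$, the next $k_2$ the fiber over $b_2$, and so on. Exactly as in the single-fiber case, after contracting the semistable chains of $\P^1$'s the family $\C_k \to B$ becomes a family of $m$-stable curves with the same generic fiber as $\C$; by uniqueness of $m$-stable limits \cite[Theorem~3.11]{SmythEI} this family is $(\C \to B,\sigman)$, so $\C_k \to \C$ is the desingularization of $\C$ at the disconnecting nodes of $C_{b_1},\dots,C_{b_t}$.

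With this global diagram in hand, the computation in the proof of Proposition~\ref{P:IntersectionNumbers} applies verbatim, step by step: resolving $A$-type surface singularities changes neither $c_1$ of the pushforward of the relative dualizing sheaf nor the self-intersections of the sections, while the $j$-th blow-up/contraction over $b_i$ contributes $+1$ to $c_1(f_*\omega)$, $+n_{ij}$ to $\psi$, and $-m_{ij}$ to $\delta_0$, where $n_{ij}$ and $m_{ij}$ count the marked points lying on, respectively the attaching points of, the contracted elliptic bridge, so that $n_{ij}+m_{ij}=l_{ij}$ is its level. Summing these local contributions over all $k$ steps gives
\[
\lambda^{m}.B = \lambda^{l}.B + \sum_{i=1}^{t}k_i,\qquad (\psi^{m}-\delta_0^{m}).B = (\psi^{l}-\delta_0^{l}).B + \sum_{i=1}^{t}\sum_{j=1}^{k_i}l_{ij},
\]
which is the assertion.

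I do not expect any genuine obstacle here: the corollary really does ``clearly extend'' the proposition. The only point worth spelling out---already furnished by the Claim and by \cite[Theorem~3.11]{SmythEI}---is that each of the $t$ chains terminates after finitely many steps (at every contraction the number of disconnecting nodes in the special fiber strictly drops) and that the family obtained at the end is genuinely $\C$, which uses the separatedness of $\S_{1,n}(m)$. This bookkeeping, rather than any conceptual difficulty, is the thing to be careful about.
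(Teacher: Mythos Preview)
Your proposal is correct and follows exactly the approach the paper intends: the paper's proof is literally the single line ``Immediate from Proposition~\ref{P:IntersectionNumbers},'' preceded by the remark that the analysis ``clearly extends'' since the blow-ups and contractions can be performed on each fiber individually. You have simply spelled out carefully what the paper leaves implicit---the locality of each modification, the concatenation of the chains, and the appeal to uniqueness of $m$-stable limits to identify the result with $\C$---so there is nothing to correct.
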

\begin{proof}
Immediate from Proposition \ref{P:IntersectionNumbers}.
\end{proof}

\section{Proof of main results}\label{S:MainTheorem}

\subsection{The birational contraction $\phi:\M_{1,n} \dashrightarrow \M_{1,n}(m)^*$}\label{S:Discrepancy}
Recall that if $\phi:X \dashrightarrow Y$ is a birational map between normal algebraic spaces, we say that $\phi$ is a \emph{birational contraction} if  $\Exc(\phi^{-1})$ has codimension $\geq 2$. The \emph{exceptional divisors} of $\phi$ are the divisors on $X$ whose birational image in $Y$ has codimension $\geq 2$.
\begin{lemma}
$\phi: \M_{1,n} \dashrightarrow \NM{n}{m}$ is a birational contraction with exceptional divisors $\{ \Delta_{0,S}\}_{S \in [n]_{n-m+1}^{n}}$.
\end{lemma}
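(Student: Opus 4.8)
The plan is to deduce everything from the analysis already carried out in the proof of Proposition~\ref{P:PicardGroup}. Recall from there the open set $U \subset \M_{1,n}$ parametrizing $m$-stable curves: $\phi|_U$ is an isomorphism onto $\phi(U) \subset \M_{1,n}(m)$, which is the locus of curves with at worst nodal singularities, this locus is smooth, and $\M_{1,n}(m) \setminus \phi(U) = \coprod_{l \geq 1}\E_l$ has codimension $\geq 2$ by Corollary~\ref{C:DimStrata}. Write $V \subset \M_{1,n}$ for the largest open set on which $\phi$ is regular, viewed as a dominant rational map to $\NM{n}{m}$ (legitimate since $\M_{1,n}$ is normal and $\NM{n}{m}$ is proper). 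As noted there, $U \subseteq V$, and the only codimension-one points of $V \setminus U$ are the generic points $\eta_S$ of the divisors $\Delta_{0,S}$ with $S \in [n]_{n-m+1}^n$. Since $\M_{1,n}$ is smooth and $\NM{n}{m}$ is normal by construction, $\phi$ is a birational map of normal spaces, so it remains to verify the contraction property and to identify the exceptional divisors.

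For the contraction property, let $\nu : \NM{n}{m} \to \M_{1,n}(m)$ be the normalization. Since $\phi(U)$ is smooth, hence normal, $\nu$ is an isomorphism over $\phi(U)$; setting $W := \nu^{-1}(\phi(U))$, the map $\phi^{-1}$ is regular on $W$ and identifies $W$ with $U$. Because $\nu$ is finite, $\NM{n}{m}\setminus W = \nu^{-1}(\M_{1,n}(m)\setminus\phi(U))$ has codimension $\geq 2$, and since $\Exc(\phi^{-1}) \subseteq \NM{n}{m}\setminus W$, this is precisely the statement that $\phi$ is a birational contraction.

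To identify the exceptional divisors, first note that $\M_{1,n}\setminus V$ has codimension $\geq 2$, so every prime divisor $D \subset \M_{1,n}$ meets $V$ in a dense open with the same generic point $\eta_D$, and the birational image of $D$ is $\overline{\{\phi(\eta_D)\}}$. If $D$ is not a boundary divisor it meets the locus of smooth curves densely; since $\phi$ is an isomorphism there, $\overline{\{\phi(\eta_D)\}}$ is a prime divisor and $D$ is not exceptional. Among the boundary divisors $\Delta_{irr}$ and $\Delta_{0,S}$ (with $S \in [n]_2^n = [n]_2^{n-m}\sqcup[n]_{n-m+1}^n$), the generic point of $\Delta_{irr}$ (an irreducible $1$-nodal genus-one curve) is $m$-stable, and for $S \in [n]_2^{n-m}$ the generic point of $\Delta_{0,S}$ parametrizes a curve $E\cup_q R$ that is $m$-stable precisely because $|E\cap\overline{C\setminus E}| + |E\cap\Sigma| = 1 + (n-|S|) > m$ when $|S|\leq n-m$ (and $R$ carries $|S|+1\geq 3$ special points). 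Hence both generic points lie in $U$, $\phi$ is a local isomorphism there, and these divisors are not exceptional. (Equivalently, by Proposition~\ref{P:PicardGroup} the classes $\lambda$ and $\{\delta_{0,S}\}_{S\in[n]_2^{n-m}}$ — recall $\delta_{irr}=12\lambda$ — are part of a basis of $\Pic_{\Q}(\NM{n}{m})$, so none of them is numerically trivial.)

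Finally, for $S \in [n]_{n-m+1}^n$ the generic point $\eta_S$ of $\Delta_{0,S}$ lies in $V$ but not in $U$, since the curve $E\cup_q R$ carries an elliptic $l$-bridge with $l = 1 + (n-|S|) \leq m$. The key claim is that $\phi(\eta_S)\notin\phi(U)$: because $\phi|_U : U \to \phi(U)$ is an isomorphism and $\phi$ is a morphism on $V\supseteq U$, one has $\phi^{-1}(\phi(U))\cap V = U$ (if $v\in V$ with $\phi(v)\in\phi(U)$, then $\phi$ and $\phi^{-1}$ are both morphisms near $v$ with $\phi^{-1}\circ\phi$ the identity rational map, forcing $v\in U$), so $\eta_S\notin U$ gives $\phi(\eta_S)\in\M_{1,n}(m)\setminus\phi(U)$; geometrically, $\phi$ contracts the elliptic $l$-bridge $E$ to an elliptic $l$-fold point. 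Consequently $\overline{\{\phi(\eta_S)\}}\subseteq\nu^{-1}\bigl(\coprod_{l\geq1}\overline{\E_l}\bigr)$, which has codimension $\geq 2$ in $\NM{n}{m}$; as this set is exactly the birational image of $\Delta_{0,S}$, the divisor $\Delta_{0,S}$ is exceptional. Thus the exceptional divisors are precisely $\{\Delta_{0,S}\}_{S\in[n]_{n-m+1}^n}$. The only genuinely delicate point is the bookkeeping with $\nu$ — one must consistently argue on $\NM{n}{m}$ rather than on $\M_{1,n}(m)$ — but since $\nu$ is finite and an isomorphism over the smooth locus $\phi(U)$, this is routine, and everything else follows directly from the facts recalled in the first paragraph.
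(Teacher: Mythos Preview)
Your proof is correct and follows essentially the same approach as the paper: both use that $\phi|_U$ is an isomorphism onto the nodal locus of $\M_{1,n}(m)$, that the complement $\coprod_{l\geq1}\E_l$ has codimension $\geq 2$ by Corollary~\ref{C:DimStrata}, and then check which boundary divisors have $m$-stable generic point. The only minor difference is packaging: you explicitly cite the setup from the proof of Proposition~\ref{P:PicardGroup} and give an abstract argument ($\phi^{-1}(\phi(U))\cap V = U$) for why $\phi(\eta_S)\notin\phi(U)$, whereas the paper appeals directly to the stable-reduction description (the generic elliptic $l$-bridge is replaced by an elliptic $l$-fold point).
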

\begin{proof}
Consider the commutative diagram
\[
\xymatrix{
\M_{1,n} \ar@{-->}[r]^{\phi} \ar@{-->}[dr] &\M_{1,n}(m)^* \ar[d]^{\pi}\\
&\M_{1,n}(m)\\
}
\]
Let $\E_{l} \subset \M_{1,n}(m)$ denote the locally closed subspace parametrizing curves with an elliptic $l$-fold point. Since an $m$-stable curve is stable iff it is nodal, the open-set $\U:=\M_{1,n}(m)-\bigcup_{l=1}^{m} \E_{l}$ parametrizes stable curves, so $(\pi \circ \phi)^{-1}|_{\U}$ is an isomorphism. Thus, $\Exc(\phi^{-1}) \subset \bigcup_{i=1}^{m} \pi^{-1}(\E_{l})$. Since $\pi$ is finite, $\bigcup_{i=1}^{m} \pi^{-1}(\E_{l})$ has codimension $\geq 2$ by Corollary \ref{C:DimStrata}.

To see that $\Exc(\phi) \subset \{ \Delta_{0,S}\}_{S \in [n]_{n-m+1}^{n}}$ simply observe that the generic point of each divisor  $\{ \Delta_{0,S}\}_{S \in [n]_{2}^{n-m}}$ corresponds to an $m$-stable curve so that $\phi$ must be an isomorphism at this point. Conversely, the generic point of each divisor $\{ \Delta_{0,S}\}_{S \in [n]_{n-m+1}^{n}}$ is not $m$-stable and is replaced by an $m$-stable curve with an elliptic $l$-fold point, where $l=n-|S|+1$. Thus, the birational images of  $\{ \Delta_{0,S}\}_{S \in [n]_{n-m+1}^{n}}$ are contained in $\bigcup_{l=1}^{m} \pi^{-1}(\E_{l})$, which has codimension $\geq 2$.
\end{proof}

In order to make calculations with test curves, it will be necessary to have a precise description of the locus on which $\phi$ is regular. The following lemma gives a useful tool for determining this locus.

\begin{lemma}\label{L:Regular}
Suppose $\phi:X \dashrightarrow Y$ is a birational map of proper algebraic spaces with $X$ normal, and suppose $U \subset X$ is an open subset such that $\phi|_{U}$ is an isomorphism. If $x \in X$ is any point, then $\phi$ is regular at $x$ iff there exists a point $y \in Y$ such that the following condition holds:

For any map $t: \Delta \rightarrow X$ satisfying
\begin{enumerate}
\item $\Delta$ is the spectrum of a DVR with generic point $\eta \in \Delta$ and closed point $0 \in \Delta$,
\item $t(\eta) \in U$,
\item $t(0)=x$,
\end{enumerate}
the composition $\phi \circ t: \Delta \rightarrow Y$ satisfies $\phi \circ t (0)=y$. (The composition $\phi \circ t$ is regular, since $Y$ is proper.)
\end{lemma}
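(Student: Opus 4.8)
The plan is to prove both implications of the stated criterion, using that $X$ is normal and $Y$ is proper (so valuative criteria apply) together with the classical fact that a rational map from a normal variety to a proper one is defined in codimension one, and that its indeterminacy locus has codimension $\geq 2$.

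\emph{($\Rightarrow$) Easy direction.} Suppose $\phi$ is regular at $x$, and set $y := \phi(x)$. Then for any $t : \Delta \to X$ with $t(0) = x$, the composition $\phi \circ t$ is literally the composite of morphisms, hence continuous, so $\phi \circ t(0) = \phi(t(0)) = \phi(x) = y$. This requires no hypotheses on $t(\eta)$ at all, and in particular verifies the condition with this choice of $y$.

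\emph{($\Leftarrow$) Main direction.} Suppose there is a point $y \in Y$ such that every arc $t : \Delta \to X$ through $x$ with $t(\eta) \in U$ has $\phi \circ t(0) = y$. I would argue as follows. Let $\Gamma \subset X \times Y$ be the closure of the graph of $\phi$ over $U$ (equivalently, the closure of the graph of the isomorphism $\phi|_U$), with projections $p : \Gamma \to X$ and $q : \Gamma \to Y$. Since $Y$ is proper, $p$ is proper and birational, and it is an isomorphism over the locus where $\phi$ is regular; $\phi$ is regular at $x$ precisely when $p^{-1}(x)$ is a single point. Now $p$ is proper and surjective, so $p^{-1}(x)$ is a nonempty closed subset of $\Gamma$; I want to show $q$ is constant on $p^{-1}(x)$, equal to $y$ — this already shows $\phi$ extends continuously, and then normality of $X$ (via, e.g., the fact that $p_*\mathcal{O}_\Gamma = \mathcal{O}_X$ since $p$ is proper birational and $X$ is normal, so $p$ has connected fibers, or more simply Zariski's main theorem) upgrades "$q$ constant on the fiber" to "$p$ is an isomorphism near $x$", i.e. $\phi$ regular at $x$. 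To see $q \equiv y$ on $p^{-1}(x)$: pick any point $\gamma \in p^{-1}(x)$. Since $\Gamma$ is a variety (in particular, its local rings have the property that any point is a specialization of a generic point through which one may pass a trace of a DVR), and since $p^{-1}(U) \cong U$ is dense in $\Gamma$, there is a DVR $\Delta$ and a map $\tilde t : \Delta \to \Gamma$ with $\tilde t(\eta) \in p^{-1}(U)$ and $\tilde t(0) = \gamma$ — this is a standard fact (any point of a variety is in the closure of a one-dimensional subvariety meeting the dense open set, and one normalizes/localizes to get the DVR; cf. the existence part of the valuative criterion of properness applied to $\Gamma \to X$ over the arc $\Spec$ of the local ring at $x$). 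Then $t := p \circ \tilde t : \Delta \to X$ satisfies $t(\eta) \in U$ and $t(0) = x$, so by hypothesis $\phi \circ t(0) = y$; but $\phi \circ t = q \circ \tilde t$ (they agree on the dense $\eta$ and $Y$ is separated), so $q(\gamma) = q(\tilde t(0)) = y$. As $\gamma \in p^{-1}(x)$ was arbitrary, $q(p^{-1}(x)) = \{y\}$.

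\emph{Concluding step.} Having shown $q$ is constant on $p^{-1}(x)$, I finish with Zariski's main theorem: $p : \Gamma \to X$ is a proper birational morphism to a normal space, hence has connected fibers; combined with $q(p^{-1}(x)) = \{y\}$, we may glue $p^{-1}$ near $x$ — concretely, $p^{-1}(x)$ maps to a point under $q$, the morphism $(p,q) : \Gamma \to X \times Y$ is a closed immersion, and so $p^{-1}(x)$ being connected and mapping to the single point $(x,y)$ forces $p^{-1}(x) = \{(x,y)\}$, whence $p$ is quasi-finite and proper and birational onto the normal $X$ near $x$, thus an isomorphism there by ZMT. Therefore $\phi = q \circ p^{-1}$ is regular at $x$ with $\phi(x) = y$.

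\emph{Main obstacle.} The one genuinely delicate point is the existence, for an arbitrary point $\gamma$ of the graph $\Gamma$ lying over $x$, of a trait $\tilde t : \Delta \to \Gamma$ hitting $\gamma$ at the closed point and landing in the dense open $p^{-1}(U)$ at the generic point; this is what lets us reduce the abstract "extends continuously" question to the concrete DVR-arc condition in the statement. For $X$, $Y$ finite type over $k$ this follows from standard curve-tracing arguments (pass to a curve through $\gamma$ not contained in the boundary $\Gamma \setminus p^{-1}(U)$, normalize, localize at a preimage of $\gamma$), and the separatedness of $Y$ is what guarantees $q \circ \tilde t = \phi \circ (p \circ \tilde t)$. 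Everything else is a routine packaging of Zariski's main theorem and the valuative criterion of properness.
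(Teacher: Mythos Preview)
Your proof is correct and follows the same essential strategy as the paper: both resolve the rational map (you via the graph closure $\Gamma \subset X \times Y$, the paper via a normal resolution $W$), use DVR arcs to show that every point of the fiber over $x$ maps to $y$ under the second projection, and then invoke Zariski's main theorem with the normality of $X$ to conclude regularity.

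There is one mild technical difference worth noting. Because you work with the graph $\Gamma$ itself, the inclusion $\Gamma \hookrightarrow X \times Y$ immediately gives that $p^{-1}(x)$ is set-theoretically a single point once you know $q(p^{-1}(x)) = \{y\}$; your appeal to connectedness is actually unnecessary at that step. The paper instead passes to a normal resolution $W$, which forces a case split: when $Y$ is not normal it must normalize $Y$, lift $q$ through the normalization, and then use connectedness of $p^{-1}(x)$ (from ZMT applied to $p:W\to X$) to see that the fiber lands in a single preimage $y_i$. Your route sidesteps this detour entirely and is a bit cleaner; the paper's route has the advantage that $W$ is guaranteed normal, which is sometimes convenient for other purposes, but is not needed here.
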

\begin{proof}
The existence of a point $y \in Y$ satisfying the given condition is clearly necessary for $\phi$ to be regular at $x$. We will prove that it is sufficient. Consider a resolution of the rational map $\phi$:
\[
\xymatrix{
&W \ar[dr]^{q} \ar[dl]_{p}&\\
X \ar@{-->}[rr]^{\phi}&&Y\\
}
\]
We may choose the resolution so that $W$ is normal and $p$ and $q$ are isomorphisms when restricted to $p^{-1}(U)$.

We claim that $p^{-1}(x) \subset q^{-1}(y)$. Given any point $w \in p^{-1}(x)$, the fact that $p^{-1}(U) \subset W$ is dense implies there exists a map $t: \Delta \rightarrow W$ such that $t(\eta) \in p^{-1}(U)$ and $t(0)=w$. Clearly, the composition $p \circ t$ satisfies the conditions (1), (2), and (3), so our hypothesis ensures that $\phi \circ p \circ t(0) = q \circ t(0) = y \in Y$. Thus, $w \in q^{-1}(y)$ as desired.

Now if $Y$ is normal, then $q$ factors through $p$ and we are done. If $Y$ is not normal, let $\pi: \tilde{Y} \rightarrow Y$ be the normalization of $Y$ and let $\pi^{-1}(y)=\{y_1, \ldots, y_m\}$. Since $W$ is normal, $q$ factors through $\pi$, say $q=\pi \circ \tilde{q}$, and $q^{-1}(y)=\tilde{q}^{-1}(y_1) \cup \ldots \cup \tilde{q}^{-1}(y_m)$. By Zariski's main theorem $p^{-1}(x)$ is connected so the above argument gives $p^{-1}(x) \subset q^{-1}(y_i)$ for some $i$. Thus, $\tilde{q}$ factors through $p$ and $\tilde{\phi}: X \dashrightarrow \tilde{Y}$ is regular at $x \in X$ with $\tilde{\phi}(x)=y_i$. Since $\pi$ is regular at $y_i$, the composition $\phi=\pi \circ \tilde{\phi}$ is regular at $x$, as desired.
\end{proof}
\begin{corollary}\label{C:RegularLocus}
The birational map $\phi:\M_{1,n} \dashrightarrow \M_{1,n}(m)$ is regular at $[C, \pn] \in \M_{1,n}$ iff $[C, \pn]$ satisfies one of the following conditions:
\begin{enumerate}
\item $[C, \pn] \notin \Delta_{0,S}$ for any $S \in [n]_{n-m+1}^{n}$, or
\item $C$ has only one disconnecting node.
\end{enumerate}
\end{corollary}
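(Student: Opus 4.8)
The plan is to invoke Lemma \ref{L:Regular}, taking $U\subset\M_{1,n}$ to be the open locus of $m$-stable curves, on which $\phi$ is an isomorphism. By that lemma, $\phi$ is regular at $[C,\pn]$ exactly when the $m$-stable limit of every one-parameter smoothing $t\colon\Delta\to\M_{1,n}$ with $t(\eta)\in U$ and $t(0)=[C,\pn]$ is the same point of $\M_{1,n}(m)$. Condition (1) is precisely the statement $[C,\pn]\in U$, so in that case there is nothing to prove. Hence the substance is to show: (i) if $C$ has a unique disconnecting node, the $m$-stable limit is canonical; and (ii) if $C$ lies on an exceptional divisor $\Delta_{0,S}$ with $|S|\ge n-m+1$ and $C$ has at least two disconnecting nodes, the limit depends on the smoothing.

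For (i), write $C=Z\cup_q R$, where $Z$ is the minimal elliptic subcurve; a unique disconnecting node forces $R\cong\P^1$, and $l:=|Z\cap R|+|Z\cap\Sigma|\le m$ (else $C\in U$). Given any admissible smoothing, I would run the explicit stable-reduction algorithm of \cite[Theorem 3.11]{SmythEI} (recalled in the Claim of Section \ref{S:1ParameterFormulas}): because $C$ has only the one disconnecting node, the algorithm reduces to blowing up the marked points of $Z$ and contracting $Z$, producing a curve $\bar C$ with an elliptic $l$-fold point whose normalization is $R$ together with the semistable $\P^1$'s sprouted at the marked points of $Z$. The crucial observation is that $\bar C$ depends only on $C$: its underlying pointed curve is manifestly determined, and the attaching data of the elliptic $l$-fold point is, by \cite[Lemma 2.2]{SmythEI} and the dualizing-sheaf analysis of Proposition \ref{P:LambdaNumber}, extracted from the residue pairing on $\omega_Z\cong\O_Z$ together with the positions of $q$ and the marked points of $Z$ — that is, it is a function of $(Z;\,q,\{p_i\}_{p_i\in Z})$, hence of $C$. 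So every smoothing has $m$-stable limit $[\bar C]$, and Lemma \ref{L:Regular} yields regularity.

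For (ii) I argue by contraposition. Suppose $C$ satisfies neither (1) nor (2): failure of (1) gives a disconnecting node $q$ whose genus-one side $E$ carries at most $m-1$ marked points, so $E$ is an elliptic $l$-bridge with $l\le m$; failure of (2) gives a second disconnecting node $q'$, necessarily on $R:=\overline{C\setminus E}$, splitting $R$ into $R'\ni q$ and a genus-zero tree $R''$. The plan is to construct two admissible smoothings $t_1,t_2\colon\Delta\to\M_{1,n}$ — one in which only the node $q$ is smoothed, so that the bridge is contracted "over" $q$ and the result is then restabilized near $q'$, and one arranged so that a larger elliptic subcurve collapses — and to trace each through \cite[Theorem 3.11]{SmythEI} to show that the terminal elliptic fold points of the two limits carry distinct attaching moduli (the modulus recording, in the limit, the relative speeds of degeneration at $q$ and $q'$). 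Since the two limits disagree, Lemma \ref{L:Regular} shows $\phi$ is not regular at $[C,\pn]$.

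The main obstacle is step (ii): one must produce two genuinely inequivalent smoothings and compute their $m$-stable limits, which requires careful bookkeeping of the blow-up/contraction sequence and, in particular, of how the attaching data of the terminal fold point varies — exactly the kind of new behaviour (non-isotrivial families with isotrivial normalization; nodes degenerating to worse singularities) flagged in the introduction to Section \ref{S:1ParameterFormulas}. Step (i), by contrast, is essentially the verification that this sequence has no free parameters when $C$ has a single disconnecting node.
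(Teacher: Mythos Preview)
Your overall framework is the paper's: apply Lemma \ref{L:Regular} with $U$ the locus of $m$-stable curves, and reduce to showing (i) uniqueness of the $m$-stable limit when $C$ has a unique disconnecting node, and (ii) non-uniqueness otherwise. Part (ii) is left as an exercise in the paper too, so your sketch there is in the same spirit.

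The gap is in your justification of (i). You claim the attaching data of the elliptic $l$-fold point in the limit is ``extracted from the residue pairing on $\omega_Z\cong\O_Z$ together with the positions of $q$ and the marked points of $Z$,'' citing Proposition \ref{P:LambdaNumber}. But that proposition concerns families lying entirely in a stratum $\E_l$ and computes $\lambda.B$; it says nothing about how the attaching modulus is produced by the blow-up/contraction procedure applied to a smoothing of $C$. A priori the attaching data after contracting $Z$ depends on the local structure of the total space of the family near $Z$, not just on the central fiber, so your assertion that it is ``a function of $(Z;q,\{p_i\})$'' needs an argument you have not supplied.

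The paper's mechanism for uniqueness is different and avoids this difficulty entirely. Writing $C=(E,\{p_i\}_{i\in[n]\setminus S},q_2)\cup_{q_1\sim q_2}(\P^1,\{p_i\}_{i\in S},q_1)$, one identifies the limit explicitly: its pointed normalization is $(\P^1,\{p_i\}_{i\in S},q_1)$ together with $|[n]\setminus S|$ copies of $(\P^1,0,\infty)$. Every branch except the one carrying $\{p_i\}_{i\in S}$ is a two-pointed $\P^1$ with automorphism group $k^*$ acting nontrivially on its cotangent line. By Lemma \ref{L:mfoldmoduli}, the attaching moduli is then $H_S\setminus\bigcup_{i\notin S}(H_i\cap H_S)$ with $|S|=l-1$, i.e.\ a single point. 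Thus there is only one isomorphism class of elliptic $l$-fold pointed curve with this normalization, so whatever attaching data the contraction produces, the limit is forced. This is the step you should replace your residue argument with; once you have it, the remaining check (that the algorithm of \cite[Theorem 3.11]{SmythEI} lands on this curve, including when the total space has an $A_k$ singularity at the node and the procedure must be iterated $k+1$ times) is routine.
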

\begin{proof}
If $[C, \pn] \notin \Delta_{0,S}$ for some $S \in [n]_{n-m+1}^{n}$, then $[C, \pn]$ is $m$-stable so $\phi$ is obviously regular in a neighborhood of $[C, \pn]$. Thus, we may assume that $[C, \pn] \in \Delta_{0,S}$ for some $S \in [n]_{n-m+1}^{n}$ and that $C$ has exactly one disconnecting node.

By Lemma \ref{L:Regular}, it suffices to show that there exists a point $[C', \{p_i'\}_{i=1}^{n}] \in \M_{1,n}(m)$ with the property that, for any map $t:\Delta \rightarrow \M_{1,n}$ such that $t(\eta) \in M_{1,n}$ and $t(0)=[C, \pn]$, we have $\phi \circ t(0)=[C', \{p_i'\}_{i=1}^{n}]$. Write 
$$(C, \pn)=(E, \{p_i\}_{i \in [n] \backslash S}, q_2) \cup_{q_1 \sim q_2} (\P^{1}, \{p_i\}_{i \in S}, q_1),$$ 
where $E$ and $\P^1$ are the two connected components of the normalization of $C$ at its unique disconnecting node. Now let $(C', \{p_i'\}_{i=1}^{n})$ be the unique isomorphism class of elliptic $(n-|S|+1)$-fold pointed curve with normalization equal to 
$$\left( \coprod _{i \in [n] \backslash S} (\P^{1}, p_i, q_i) \right) \coprod (\P^{1}, \{p_i\}_{i \in S}, q_1) ,$$
where $(\P^{1}, p_i, q_i) \simeq (\P^1, 0, \infty)$ and we identify $q_1 \cup \{q_i \}_{i \in [n] \backslash S}$ to form an elliptic $n-|S|+1$-fold point. Note that, by Lemma 2.7, the attaching data for this elliptic $n-|S|+1$-fold point is uniquely determined. We claim that $[C', \{p_i'\}_{i=1}^{n}] \in \M_{1,n}(m)$ satisfies the desired condition.

Given a map $t:\Delta \rightarrow \M_{1,n}$ such that $t(\eta) \in M_{1,n}$ and $t(0)=[C, \pn]$, we may assume (after a finite base change) that $t$ corresponds to smoothing $(\C \rightarrow \Delta, \sigman)$ and it suffices to show that the $m$-stable limit of the generic fiber $\C_{\eta}$ is $[C', \{p_i'\}_{i=1}^{n}]$.  To check this, we use the explicit algorithm for finding $m$-stable limits as described in \cite[Theorem 3.11]{SmythEI}. When the total space of $\C$ is smooth, the $m$-stable limit is produced simply by blowing up $\C$ at the marked points on $E$ and contracting the strict transform of $E$, which precisely gives $[C', \{p_i'\}_{i=1}^{n}]$. If $\C$ has an $A_{k}$ singularity at the unique disconnecting node of $C$, the $m$-stable limit is produced by desingularizing $\C$ at this point, and repeating this blow-up/contraction process $k+1$ times. We leave it to the reader to check that the result is again simply $[C', \{p_i'\}_{i=1}^{n}]$.

To see that if $[C, \pn] \in \M_{1,n}$ fails to satisfy (1) and (2), then $\phi$ is not regular at $[C, \pn]$ it suffices to exhibit two smoothings of $[C, \pn]$ which have different $m$-stable limits. We leave this as an exercise for the reader.
\end{proof}

\begin{corollary}\label{C:ContractedCurve}
Suppose $[C, \pn] \in \M_{1,n}$ satisfies
\begin{enumerate}
\item $C \in \Delta_{0,S}$ for  some $S \in [n]_{n-m+1}^{n}$,
\item $C$ has exactly one disconnecting node.
\end{enumerate}
If we write
$$(C, \pn)=(E, \{p_i\}_{i \in [n] \backslash S}, q_2) \cup_{q_1 \sim q_2} (\P^{1}, \{p_i\}_{i \in S}, q_1),$$ then $\phi([C, \pn])=[C', \{p_i'\}_{i=1}^{n}]$, where $(C', \{p_i'\}_{i=1}^{n})$ be the unique isomorphism class of elliptic $(n-|S|+1)$-fold pointed curve with normalization equal to $(\P^{1}, \{p_i\}_{i \in S}, q_1) \cup \coprod _{i \in [n] \backslash S} (\P^{1}, p_i, q_i)$. 
\end{corollary}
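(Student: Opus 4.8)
The plan is to read off this identification from the analysis already carried out in the proof of Corollary~\ref{C:RegularLocus}; below I indicate the steps one would spell out.

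By Corollary~\ref{C:RegularLocus}, conditions (1) and (2) guarantee that $\phi$ is regular at $[C,\pn]$. Since $\phi$ restricts to an isomorphism on the dense open locus $M_{1,n}\subset\M_{1,n}$ of smooth curves, the value $\phi([C,\pn])$ may be computed as a limit along any one-parameter degeneration: choose a map $t\colon\Delta\to\M_{1,n}$ with $\Delta$ the spectrum of a DVR, $t(\eta)\in M_{1,n}$ and $t(0)=[C,\pn]$; then $\phi([C,\pn])=(\phi\circ t)(0)$, which is the $m$-stable limit of the smooth curve parametrised by $t(\eta)$. That this limit does not depend on the choice of $t$ is the content of Lemma~\ref{L:Regular}, but once regularity is known we are free to pick a convenient $t$.

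Now realise $t$ by a family of pointed curves $(\C\to\Delta,\sigman)$ smoothing $(C,\pn)$ and, after a finite base change, arrange that $\C$ has smooth total space, or at worst an $A_k$-singularity at the unique disconnecting node of $C$. Since $C$ has exactly one disconnecting node --- the one separating $E$ from the $\P^{1}$-tail --- the subcurve $E$ has no disconnecting node, hence is the minimal elliptic subcurve of $C$. Run the algorithm for $m$-stable limits of \cite[Theorem 3.11]{SmythEI} (recalled in the Claim of Section~\ref{S:1ParameterFormulas}): blow up $\C$ at the $n-|S|$ marked points $\{p_i\}_{i\in[n]\backslash S}$ lying on $E$, and contract the strict transform $\widetilde{E}$ by means of the nef line bundle $\omega_{\C/\Delta}(\widetilde{E}+2\sum_{i=1}^{n}\sigma_i)$, as in \cite[Lemma 2.12]{SmythEI}. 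This contraction replaces $\widetilde{E}$ by an elliptic $l$-fold point, where $l$ is the number of branches meeting $\widetilde{E}$ in the blown-up surface, namely the $n-|S|$ exceptional curves together with the strict transform of the $\P^{1}$-tail; thus $l=n-|S|+1$. The exceptional curves become rational tails carrying the single marked points $p_i$ ($i\in[n]\backslash S$), while the $\P^{1}$-tail still carries the marked points $\{p_i\}_{i\in S}$, so the resulting special fibre has normalisation $(\P^{1},\{p_i\}_{i\in S},q_1)\cup\coprod_{i\in[n]\backslash S}(\P^{1},p_i,q_i)$ at its unique elliptic $(n-|S|+1)$-fold point. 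Because $S\in[n]_{n-m+1}^{n}$ forces $l=n-|S|+1\le m$, and the defining conditions of $m$-stability are readily verified, this special fibre is $m$-stable, so by uniqueness of $m$-stable limits it equals $\phi([C,\pn])$. Finally, Lemma~\ref{L:mfoldmoduli} shows that an elliptic $(n-|S|+1)$-fold point with this normalisation admits a unique isomorphism class of attaching data (the $n-|S|$ two-pointed rational branches each contribute an automorphism, cutting the relevant subquotient of the attaching space down to a point), and therefore the limit is precisely $(C',\{p_i'\})$.

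I do not expect any real obstacle here. The only labour is routine bookkeeping: checking $m$-stability of the output fibre, confirming that a single blow-up/contraction already produces it (no new elliptic $j$-bridge with $j\le m$ is created), and, in the $A_k$-case, iterating the blow-up/contraction the requisite number of times and blowing down the residual semistable chain of $\P^{1}$'s. All of this is already implicit in the proof of Corollary~\ref{C:RegularLocus}; the one thing worth being careful about is that the limit comes out equal to the stated curve independently of the type of singularity of $\C$ at the node.
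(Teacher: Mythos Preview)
Your proposal is correct and matches the paper's approach exactly: the paper's own proof reads ``Immediate from the proof of the preceding corollary,'' and you have correctly identified and spelled out precisely those parts of the proof of Corollary~\ref{C:RegularLocus} (the explicit construction of $(C',\{p_i'\})$, the blow-up/contraction computation of the $m$-stable limit, and the appeal to Lemma~\ref{L:mfoldmoduli} for uniqueness of attaching data) that yield the statement.
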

\begin{proof}
Immediate from the  proof of the preceding corollary.
\end{proof}

\begin{corollary}\label{C:Regularity}
The birational map $\phi: \M_{1,n}(m-1) \dashrightarrow \M_{1,n}(m)$ is regular iff $m=1$ or $m=n-1$.
\end{corollary}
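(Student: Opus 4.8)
The plan is to apply Lemma~\ref{L:Regular} pointwise: $\phi$ is regular at $[C]\in\M_{1,n}(m-1)$ exactly when the $m$-stable limit of $C$ is the same for every one-parameter smoothing of $C$. If $C$ is already $m$-stable then $\phi$ is an isomorphism near $[C]$; otherwise $C$ has an elliptic $l$-bridge with $l\le m$, and since $C$ is $(m-1)$-stable it has none with $l\le m-1$, so its minimal elliptic subcurve $Z\subset C$ is a smooth elliptic curve or a nodal cubic of level exactly $m$, meeting $\overline{C\setminus Z}$ in $a$ nodes $q_1,\dots,q_a$ and carrying $b=m-a$ marked points. By the stable-reduction algorithm of \cite[Theorem 3.11]{SmythEI} (recalled in the proof of the Claim in Section~\ref{S:1ParameterFormulas}) every such limit is obtained by sprouting a $\P^1$ at each marked point of $Z$ and contracting $Z$ to an elliptic $m$-fold point $q$; the normalization of the limit at $q$ is $R_1\sqcup\cdots\sqcup R_a\sqcup T_1\sqcup\cdots\sqcup T_b$, where the $R_i$ are the connected components of $\overline{C\setminus Z}$ and each $T_j\cong(\P^1,0,\infty)$ carries one of the marked points formerly on $Z$. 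By the pointed version of Lemma~\ref{L:mfoldmoduli} the limit is then determined, up to isomorphism, by a point of $H_S=\P\big(\bigoplus_{i\notin S}T_{q_i}^\vee\big)$, where $S$ indexes the ``movable'' branches (those whose component through $q$ is a $\P^1$ with only one other distinguished point). Thus $\phi$ is regular at $[C]$ as soon as $|S|\ge m-1$, i.e. as soon as $\dim H_S=0$.

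For $m=1$ and $m=n-1$ I claim $a\le 1$ always, giving regularity everywhere. Since $\mathrm{level}(Z)=m$, $Z$ is not a singular elliptic point (its level would then equal the number of its branches, which is $\le m-1$), so $C$ is nodal; hence each $R_i$, being a rational tail of a nodal curve, carries at least two marked points by condition (3)(a) of $m$-stability, so $n\ge 2a+b=a+m$, giving $a\le n-m$, while $a+b=m$ gives $a\le m$; as $m<n$ also rules out $a=0$, we get $a=1$ both when $m=1$ and when $m=n-1$. Then $R_1$ carries $n-(m-1)\ge 2$ marked points and so is not movable, while the $m-1$ tails $T_j$ are $\P^1$'s with a single marked point and hence movable, so $|S|=m-1$, $H_S$ is a point, and by Lemma~\ref{L:Regular} $\phi$ is regular.

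For $2\le m\le n-2$ the task is to show $\phi$ fails to be regular somewhere. Here $\min(m,n-m)\ge 2$, so there is an $m$-element partition $\Sigma$ of $[n]$ with $a\ge 2$ parts of size $\ge 2$ (take sizes $2,2,1,\dots,1$ and, if $n>m+2$, enlarge the first part). One exhibits an $(m-1)$-stable elliptic $m$-bridge curve $[D]=Z\cup_{q_1}R_1\cup\cdots\cup_{q_a}R_a$ of type $\Sigma$: each $R_i\cong\P^1$ carries $\ge 2$ marked points so is rigid, $\mathrm{level}(Z)=m$ while every strictly larger genus-one subcurve has strictly larger level, so $D$ is $(m-1)$-stable but not $m$-stable, and the attaching modulus of its $m$-stable limit is constrained only to lie in the positive-dimensional space $H_S=\P^{a-1}$. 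One then produces two one-parameter smoothings $\D_1,\D_2\to\Delta$ with $(\D_i)_0=D$, smooth general fibre, and $m$-stable limits carrying distinct attaching moduli in $H_S$; by Lemma~\ref{L:Regular}, $\phi$ is not regular at $[D]$. Since an $m$-element partition of $[n]$ with $\ge 2$ parts of size $\ge 2$ exists if and only if $2\le m\le n-2$, combining this with the previous paragraph proves the corollary.

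The construction of the two smoothings is the heart of the argument and the step I expect to be the main obstacle. Using the sprouting families of Proposition~\ref{P:mfoldfamiliesII} and Corollary~\ref{C:mfoldfamiliesII} one can realize the two chosen points of $H_S$ by explicit families $\C_i\to\Delta$ of $m$-stable curves with smooth general fibre, pass to their $(m-1)$-stable models $\D_i\to\Delta$, and must then check — tracking, via the blow-up/contraction bookkeeping of Proposition~\ref{P:IntersectionNumbers} (blow up the marked points of the minimal elliptic subcurve, contract it, and contract the semistable $\P^1$-chains produced when resolving the total space) — how the attaching modulus of the limiting elliptic $m$-fold point depends on the relative rates at which the nodes $q_1,\dots,q_a$ are smoothed (equivalently, on the lengths of the inserted $\P^1$-chains), and verify that these rates can be chosen to yield the same $(m-1)$-stable central fibre $D$ but two different points of $H_S$. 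Equivalently, one must locate $[D]$ in the indeterminacy locus of the ``contract $Z$'' rational map on $\mathcal{B}_\Sigma$ (the flipping locus of the birational transformation $\M_{1,n}(m-1)^\ast\dashrightarrow\M_{1,n}(m)^\ast$), which is precisely where the attaching modulus is not already pinned down by the geometry of $C$ alone.
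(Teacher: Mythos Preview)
Your approach is essentially the paper's, made more explicit. The paper simply cites Corollary~\ref{C:RegularLocus} (whose proof, via Lemma~\ref{L:Regular}, carries over verbatim to $\M_{1,n}(m-1)\dashrightarrow\M_{1,n}(m)$) and checks that for $m=1$ or $m=n-1$ every $(m-1)$-stable curve has at most one disconnecting node, while for $2\le m\le n-2$ one can write down one with two. Your attaching-moduli count (that $\dim H_S=a-1$, so $a=1$ forces a unique limit) is precisely the mechanism behind the paper's condition~(2).

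One correction to the regularity half: for $m=n-1$ an $(m-1)$-stable curve $C$ need not be nodal---it may already carry an elliptic $l$-fold point with $l\le m-1$---so $Z$ is then the union of rational branches through that point, not ``a smooth elliptic curve or a nodal cubic,'' and your appeal to condition~(3)(a) is out of place. The bound ``each $R_i$ carries $\ge 2$ marked points'' (hence $a\le n-m$) is still correct, but it comes from condition~(3)(b3) in the non-nodal case. With that adjustment your argument for $a=1$, hence regularity, is fine.

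For the irregularity half you have correctly identified both the example and the missing step; the paper leaves the identical step to the reader (the converse direction of Corollary~\ref{C:RegularLocus} is stated as an exercise). To close the gap, take $D=Z\cup_{q_1}R_1\cup_{q_2}R_2$ with $Z$ smooth and $a=2$, and consider two smoothings with smooth total space in which the local smoothing parameters at $q_1,q_2$ are $(t,t)$ and $(t,ct)$ for some $c\in k^*\setminus\{1\}$. A single blow-up/contraction step produces the $m$-stable limit, and a direct check using the description of attaching data in Section~\ref{S:AttachingModuli} shows that the resulting point of $H_S\cong\P^1$ records exactly this ratio. This is a routine computation, not a conceptual obstacle; you should simply carry it out rather than defer it.
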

\begin{proof}
If $m=1$, then any $m$-stable curve evidently satisfies condition (1) in Corollary \ref{C:RegularLocus}. If $m=n-1$, then any $m$-stable curve has at most one disconnecting node, i.e. any $m$-stable curve satisfies condition (2) in Corollary \ref{C:RegularLocus}. On the other hand, if $2 \leq m \leq n-2$, then the reader may easily check that there exist $m$-stable curves which fail to satisfy both (1) and (2).
\end{proof}

Since $\phi: \M_{1,n} \dashrightarrow \NM{n}{m}$ is a birational contraction, push forward of cycles and pull back of divisors induce well-defined maps:
\begin{align*}
\phi_*: N^{1}(\M_{1,n}) \rightarrow N^1(\NM{n}{m}),\\
 \phi^*: N^1(\NM{n}{m}) \rightarrow N^{1}(\M_{1,n}),
\end{align*}
where $N^{1}(X)$ denotes the $\Q$-vector space generated Cartier divisors moduli numerical equivalence. Since $N^1(\M_{1,n})$ and $N^1(\NM{n}{m})$ are generated by the classes of the boundary divisors (Propositions 3.1 and 3.2), the following proposition determines $\phi_*$ and $\phi^*$ completely.

\begin{proposition}\label{P:PushPull}
For the birational contraction
$
\phi:\M_{1,n} \dashrightarrow \NM{n}{m},
$
$\phi_*$ and $\phi^*$ satisfy the following formulas:
\begin{align*}
\phi_{*}\Delta_{0,S}&=\Delta_{0,S}\\
\phi_{*}\Delta_{irr}&=\Delta_{irr}\\
\phi^*\Delta_{0,S}&=\Delta_{0,S}\\
\phi^*\Delta_{irr}&=\Delta_{irr}+\sum_{S \in [n]_{n-m+1}^{n}}12\Delta_{0,S}
\end{align*}
\end{proposition}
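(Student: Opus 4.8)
The plan is to prove the two pushforward formulas essentially by unwinding definitions, and to reduce the two pullback formulas to a single intersection-theoretic computation that feeds on the one-parameter-family machinery of Section~\ref{S:1ParameterFormulas}.

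For the pushforwards: the classes $\Delta_{irr}$ and $\Delta_{0,S}$ (for $S\in[n]_2^{n-m}$) on $\NM{n}{m}$ are, by construction, the birational images under $\phi$ of the same-named classes on $\M_{1,n}$; since $\phi$ restricts to an isomorphism on the open locus of stable curves, which contains the generic point of each of these divisors (an irreducible nodal genus-one curve, and a curve with one genus-zero tail carrying at most $n-m$ of the marked points, are $m$-stable), $\phi_*$ fixes each of them. For $S\in[n]_{n-m+1}^{n}$, the lemma preceding this proposition identifies $\Delta_{0,S}$ as an exceptional divisor of $\phi$, so $\phi_*\Delta_{0,S}=0$.

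For the pullbacks I would first note that, by the remark following Proposition~\ref{P:PicardGroup}, $\Delta_{irr}=12\lambda$ in $A^1(\NM{n}{m})$, and that $12\lambda=\Delta_{irr}$ holds on $\M_{1,n}$ as well (Proposition~\ref{P:StartingPicardGroup}); hence the asserted formula for $\phi^*\Delta_{irr}$ is equivalent to $\phi^*\lambda=\lambda+\sum_{S\in[n]_{n-m+1}^{n}}\Delta_{0,S}$ in $N^1(\M_{1,n})$. Together with $\phi^*\Delta_{0,S}=\Delta_{0,S}$ for $S\in[n]_2^{n-m}$, this determines $\phi^*$ on all of $N^1(\NM{n}{m})$ by Proposition~\ref{P:PicardGroup}. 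To verify these two identities I would intersect with a general complete-intersection curve $B\subset\M_{1,n}$, chosen to avoid the codimension-$\ge 2$ indeterminacy locus of $\phi$ and to meet the boundary transversally at general points of one component at a time; such curves span $N_1(\M_{1,n})$. Passing to a common resolution $p\colon W\to\M_{1,n}$, $q\colon W\to\NM{n}{m}$, the strict transform $\tilde B$ misses every $p$-exceptional divisor (image of codimension $\ge 2$), so the projection formula gives $\phi^*D\cdot B=q^*D\cdot\tilde B=D\cdot\phi_*B$ for every $\Q$-Cartier $D$ on $\NM{n}{m}$, where $\phi_*B:=q_*\tilde B$ is the cycle swept by the classifying map $c_m$ of the associated family of $m$-stable curves; thus $\phi^*\lambda\cdot B=\lambda^m.B$ and $\phi^*\Delta_{0,S}\cdot B=\delta_{0,S}^m.B$. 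Applying Corollary~\ref{C:CaseIIFormulas} with $l=0$ (the stable family) gives $\lambda^m.B=\lambda.B+\sum_b k_b$, where $b$ runs over the points where $B$ crosses an exceptional $\Delta_{0,S}$; since $B$ meets such a $\Delta_{0,S}$ transversally at a general point, the total space of the family is smooth at the destabilized node and the $m$-stable reduction algorithm terminates after a single blow-up followed by a single contraction, so $k_b=1$ and $\sum_b k_b=\big(\sum_{S\in[n]_{n-m+1}^{n}}\Delta_{0,S}\big)\cdot B$. The same step, analyzed as in the proof of Proposition~\ref{P:IntersectionNumbers}, absorbs one disconnecting node of type $S$ with $|S|\ge n-m+1$ into an elliptic $l$-fold point while creating only disconnecting nodes of singleton type, hence leaves $\delta_{0,S'}^m.B=\delta_{0,S'}.B$ for all $S'\in[n]_2^{n-m}$. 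Summing over general $B$ yields the divisor identities, and then $\phi^*\Delta_{irr}=12\phi^*\lambda=\Delta_{irr}+\sum_{S\in[n]_{n-m+1}^{n}}12\Delta_{0,S}$.

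The step I expect to be the crux is the verification that $k_b=1$ at each bad fiber — that is, that crossing an exceptional boundary divisor $\Delta_{0,S}$ transversally at a general point produces a family with smooth total space at the node being destabilized, so that the explicit $m$-stable reduction procedure of \cite[Theorem~3.11]{SmythEI} (recalled in Corollary~\ref{C:RegularLocus} and in the Claim of Section~\ref{S:1ParameterFormulas}) consists of exactly one blow-up of the marked points of the minimal elliptic subcurve followed by one contraction of its strict transform. Everything else — the projection formula for $\phi^*$ against general curves, and the bookkeeping of which disconnecting nodes are created, destroyed, or absorbed — is routine once Section~\ref{S:1ParameterFormulas} is in hand.
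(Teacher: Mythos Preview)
Your argument is correct, but it takes a genuinely different route from the paper's. The paper does not use general complete-intersection curves together with the stable-reduction calculus of Section~\ref{S:1ParameterFormulas}. Instead, for each exceptional $T\in[n]_{n-m+1}^{n}$ it manufactures a single \emph{contracted} test curve $B_T\subset\M_{1,n}$: glue a nonconstant one-parameter family of $(|T|+1)$-pointed genus-one curves with only irreducible nodal degenerations to a constant rational tail carrying the remaining marks. By Corollary~\ref{C:RegularLocus} this curve lies in the regular locus of $\phi$ and, by Corollary~\ref{C:ContractedCurve}, is contracted to a point; hence $\phi^*D\cdot B_T=0$ for every $D$. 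Writing $\phi^*\Delta_{irr}=\Delta_{irr}+\sum_T a_T\Delta_{0,T}$ and intersecting with $B_T$, the relation $\Delta_{irr}\cdot B_T=-12\,\Delta_{0,T}\cdot B_T$ (from Proposition~\ref{P:StartingPicardGroup}) immediately gives $a_T=12$; similarly $b_T=0$ for $\phi^*\Delta_{0,S}$.

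The trade-off is this: the paper's method is computationally lighter once the contracted curve is in hand---no blow-up/contraction bookkeeping, just the standard relations on $\M_{1,n}$---but it incurs a forward reference to Corollary~\ref{C:TestCurves} (existence of pencils with only irreducible nodal fibers), which is proved only after projectivity of $\M_{1,n}(n-1)$ is established; the paper has to remark explicitly that no circularity results. Your approach is self-contained given Section~\ref{S:1ParameterFormulas} and avoids that forward reference, at the cost of verifying the $k_b=1$ step and tracking which disconnecting nodes survive. Your identification of that verification as the crux is exactly right, and your sketch of it (smooth total space at a transversal crossing, one blow-up of the marked points on $E$ followed by one contraction yielding a curve with an elliptic $(n-|S|+1)$-fold point and no disconnecting nodes) is correct.
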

\begin{proof}
The push forward formulae are immediate from the definition of $\Delta_{0,S}$ and $\Delta_{irr}$. For the pull back formulae, note that since $\phi$ has exceptional divisors $\{\Delta_{0,T}\}_{T \in [n]_{n-m+1}^{n}}$ we may write
\begin{align*} \tag{\dag}
\phi^*\Delta_{irr}=\Delta_{irr}+\sum_{T \in [n]_{n-m+1}^{n}}a_T\Delta_{0,T},\\ \tag{\ddag}
\phi^*\Delta_{0,S}=\Delta_{0,S}+\sum_{T \in [n]_{n-m+1}^{n}}b_T\Delta_{0,T},
\end{align*}
for some coefficients $a_{T},b_{T}$. We will prove that $a_{T}=12$ and $b_{T}=0$ by intersecting with an appropriate collection of test curves.

Fix $T \in [n]_{n-m+1}^{n}$, and define a complete one-parameter family of $n$-pointed stable curves as follows: Let $(\C_1 \rightarrow B_{T}, \{\sigma_i\}_{i=1}^{|T|+1})$ be a non-constant family of $(|T|+1)$-pointed stable curves of genus one, with smooth general fiber and only irreducible singular fibers. The existence of such families follows from Corollary \ref{C:TestCurves} in Section 4.2. (The reader may verify that this Proposition is not invoked in the proof of any intermediate results.) Let $\sigma_1, \ldots, \sigma_{|T|}$ be labeled by the elements of $T$, and consider $\sigma_{|T|+1}$ as an attaching section. Next, let $(\C_2 \rightarrow B_{T}, \{\tau_i\}_{i=1}^{n-|T|+1})$ be a constant family of smooth rational curves over $B_{T}$ with $n-|T|+1$ constant sections. Let $\tau_1, \ldots, \tau_{n-|T|}$ be labeled by elements of $[n] \backslash T,$ and consider $\tau_{n-|T|+1}$ as an attaching section. Gluing $\C_1$ to $\C_2$ along $\sigma_{|T|+1} \sim \tau_{n-|T|+1}$, we obtain a family of $n$-pointed stable curves over $B_{T}$. We claim that the curve $B_{T} \subset \M_{1,n}$ satisfies
\begin{itemize}
\item[(1)] $\phi$ is regular in a neighborhood of $B_T$,
\item[(2)] $B_{T}$ is contracted by $\phi$,
\item[(3)] $\Delta_{irr}.B_{T}=-12 (\Delta_{0,T}.B_{T})$
\item[(4)] $\Delta_{0,S}.B_{T}=0$ if $S \neq T$.
\end{itemize}
Part (1) follows from Corollary \ref{C:RegularLocus}, since each fiber of the family has only one disconnecting node. Using Corollary \ref{C:ContractedCurve}, one sees that each point of $B_{T}$ is mapped to the same point in $\M_{1,n}(m)$, and (2) follows. Parts (3) is a standard calculation using the relations in $\Pic_{\Q}(\M_{1,n})$ (Proposition 3.1), and (4) is immediate from the construction. Intersecting both sides of $\dag$ and $\ddag$ with the test curve $B_{T}$ gives $a_T=12$ and $b_T=0$, as desired.
\end{proof}
We can use our formulas for $\phi_*$ and $\phi^*$ to compare section rings on $\M_{1,n}$ and $\NM{n}{m}$.

\begin{proposition}\label{P:Discrepancy}
$
R(\M_{1,n}, D(s))=R(\NM{n}{m}, \phi_*D(s))
$ iff $s \leq 12-m$.
\end{proposition}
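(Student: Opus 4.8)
The plan is to reduce the claimed identity of section rings to an effectivity statement for a divisor supported on the $\phi$-exceptional locus, verify that statement by a computation in $\Pic_{\Q}$, and treat the converse with an explicit $\phi$-contracted test curve. Since $\NM{n}{m}$ is $\Q$-factorial (Proposition \ref{P:PicardGroup}), $\phi_*D(s)$ is $\Q$-Cartier, so $\phi^*\phi_*D(s)$ makes sense. Because $\phi$ is a birational contraction, a common resolution $\M_{1,n}\xleftarrow{p}W\xrightarrow{q}\NM{n}{m}$ has the property that every $p$-exceptional divisor is $q$-exceptional (otherwise $\phi^{-1}$ would contract a divisor, against $\codim\Exc(\phi^{-1})\ge2$); pulling sections back through $W$ then gives $R(\NM{n}{m},\phi_*D(s))=R(\M_{1,n},\phi^*\phi_*D(s))$, and for any effective $\Q$-divisor $E$ supported on the exceptional divisors $\{\Delta_{0,S}\}_{S\in[n]_{n-m+1}^{n}}$, normality of $\NM{n}{m}$ gives $\phi_*\O(E)=\O$, hence $R(\M_{1,n},\phi^*\phi_*D(s)+E)=R(\NM{n}{m},\phi_*D(s))$. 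So it is enough to determine for which $s$ one has $D(s)\ge\phi^*\phi_*D(s)$.

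For this I would compute $\phi^*\phi_*D(s)-D(s)$ explicitly. Using $\Delta=\Delta_{irr}+\sum_{S\in[n]_2^n}\Delta_{0,S}$, the relation $\Delta_{irr}=12\lambda$ on both spaces (Proposition \ref{P:StartingPicardGroup} and the Remark after Proposition \ref{P:PicardGroup}), the relations $\psi=\sum_i\psi_i$ and $\psi_i=\lambda+\sum_{i\in S}\delta_{0,S}$, and the pull-back formulas of Proposition \ref{P:PushPull}, one gets
\begin{align*}
\phi^*\lambda&=\lambda+\sum_{S\in[n]_{n-m+1}^{n}}\Delta_{0,S},\\
\phi^*\Delta_{irr}&=\Delta_{irr}+12\sum_{S\in[n]_{n-m+1}^{n}}\Delta_{0,S},\\
\phi^*\psi&=\psi+\sum_{S\in[n]_{n-m+1}^{n}}(n-|S|)\,\Delta_{0,S},
\end{align*}
and since $\phi_*D(s)=s\lambda+\psi-\Delta_{irr}-\sum_{S\in[n]_2^{n-m}}\Delta_{0,S}$, combining these yields
\[
\phi^*\phi_*D(s)-D(s)=\sum_{l=1}^{m}(s+l-12)\Big(\sum_{|S|=n-l+1}\Delta_{0,S}\Big).
\]
The coefficients $s+l-12$ increase with $l$, so they are all $\le0$ precisely when $s+m-12\le0$, i.e.\ $s\le12-m$; for such $s$ the reduction step yields $R(\M_{1,n},D(s))=R(\NM{n}{m},\phi_*D(s))$, which is the ``if'' direction.

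For the converse, suppose $s>12-m$; then for $|S_0|=n-m+1$ the coefficient of $\Delta_{0,S_0}$ in $\phi^*\phi_*D(s)-D(s)$ is $s+m-12>0$, and $D(s)$ is big (as $s>12-m\ge13-n$). I would take $\Gamma$ to be the one-parameter family obtained by gluing a general rational elliptic surface over $\P^1$ carrying $m-1$ marked sections to a constant genus-zero tail carrying the remaining $n-m+1$ marked points, glued along a further section. Its genus-one component is an elliptic $m$-bridge, so $\Gamma$ is contracted by $\phi$, while $\lambda.\Gamma=1$, $\psi.\Gamma=m-1$, $\sum_S\Delta_{0,S}.\Gamma=-1$ and $\Delta_{irr}.\Gamma=12$, whence $D(s).\Gamma=s+m-12>0$ but $\phi^*\phi_*D(s).\Gamma=\phi_*D(s).\phi_*\Gamma=0$. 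As such curves sweep out $\Delta_{0,S_0}$, which is not contained in the indeterminacy/contracted locus of the natural map $\M_{1,n}\dashrightarrow\M_{1,n}^{s}=\Proj R(s)$, no section of a large multiple of $D(s)$ can vanish along $\Delta_{0,S_0}$ to the order it acquires in $\phi^*\phi_*D(s)$, so the natural injection $R(\M_{1,n},D(s))\hookrightarrow R(\NM{n}{m},\phi_*D(s))$ is strict.

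The routine bulk is the $\Pic_{\Q}$ computation, whose only fussy point is keeping straight that the $\Delta_{0,S}$ with $|S|\le n-m$ persist on $\NM{n}{m}$ while those with $|S|>n-m$ are the exceptional divisors of $\phi$. The real obstacle is the converse direction: making precise and justifying the general equivalence ``$R(\M_{1,n},D(s))=R(\NM{n}{m},\phi_*D(s))\iff D(s)-\phi^*\phi_*D(s)\ge0$'' for the big divisor $D(s)$, which is where the contracted test curve $\Gamma$ (or, alternatively, an induction on $m$ using the known shape of $\M_{1,n}^{s}$ for $s\in(12-m,13-m)$) is needed; the ``if'' half, the only part invoked later in Section \ref{S:AmpleDivisors}, is comparatively soft once the divisor computation is in place.
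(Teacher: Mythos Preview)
Your approach is essentially the paper's: both reduce to the effectivity of $D(s)-\phi^*\phi_*D(s)$ on the $\phi$-exceptional locus, carry out the same $\Pic_{\Q}$ computation (your formula $\phi^*\phi_*D(s)-D(s)=\sum_{l=1}^{m}(s+l-12)\sum_{|S|=n-l+1}\Delta_{0,S}$ agrees with the paper's $D(s)-\phi^*\phi_*D(s)=\sum_{S\in[n]_{n-m+1}^{n}}(|S|+11-n-s)\Delta_{0,S}$ after the change of index $l=n-|S|+1$), and read off $s\le 12-m$ as the effectivity threshold.

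You supply more than the paper does in two places. First, the paper's proof begins with the bare sentence ``It suffices to show that $\phi^*\phi_*D(s)-D(s)\ge 0$'' (note the sign slip there; the body of the paper's proof and your argument both show it is $D(s)-\phi^*\phi_*D(s)\ge 0$ that matters), whereas you spell out the reduction via a common resolution and the identity $R(\M_{1,n},\phi^*\phi_*D(s)+E)=R(\NM{n}{m},\phi_*D(s))$ for exceptional $E\ge 0$. Second, the paper gives no argument at all for the ``only if'' direction; it simply asserts the equivalence after the divisor computation. Your test-curve idea with $\Gamma\subset\Delta_{0,S_0}$ is a reasonable strategy, but as you yourself flag, the final step---deducing strict containment of section rings from $D(s).\Gamma>0=\phi^*\phi_*D(s).\Gamma$---is not quite closed: one needs to produce, for some $m\gg 0$, a section of $m\phi_*D(s)$ whose pullback does not vanish to the required order along $\Delta_{0,S_0}$, and your appeal to the rational map $\M_{1,n}\dashrightarrow\M_{1,n}^{s}$ risks circularity. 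Your parenthetical alternative (induction on $m$, using that $\phi_{m-1,*}D(s)$ is ample on $\NM{n}{m-1}$ for $s\in(12-m,13-m)$ so that $\M_{1,n}^{s}\simeq\NM{n}{m-1}\not\simeq\NM{n}{m}$) is the cleaner route and would close the gap. As you correctly note, only the ``if'' half is invoked downstream.
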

\begin{proof}
It suffices to show that $\phi^*\phi_*D(s)-D(s) \geq 0$ iff $s \leq 12-m$. Using the relations in $\Pic_{\Q}(\M_{1,n})$ (Proposition \ref{P:StartingPicardGroup}), we have
$$
D(s):=s\lambda+\psi-\Delta=\frac{(n+s-12)}{12}\Delta_{irr}+\sum_{S \in [n]_{2}^{n}}(|S|-1)\Delta_{0,S}
$$
Using the formulae of Proposition \ref{P:PushPull}, we have
\begin{align*}
\phi_*D(s)&=\frac{(n+s-12)}{12}\Delta_{irr}+\sum_{S \in [n]_{2}^{n-m}}(|S|-1)\Delta_{0,S},\\
\phi^*\phi_*D(s)&=\frac{(n+s-12)}{12}\Delta_{irr}+\sum_{S \in [n]_{2}^{n-m}}(|S|-1)\Delta_{0,S}+\sum_{S \in [n]_{n-m+1}^{n}}(n+s-12)\Delta_{0,S}.
\end{align*}
Thus, 
$$
D(s)-\phi_*\phi_*D(s)=\sum_{S \in [n]_{n-m+1}^{n}}(|S|+11-n-s) \Delta_{0,S}.
$$
Since $|S| \geq n-m+1$, we have $D(s)-\phi_*\phi_*D(s) \geq 0 \iff 12-m-s \geq 0$. Thus, $s  \leq 12-m$ iff
$
R(\M_{1,n}, D(s))=R(\NM{n}{m}, \phi_*D(s)).
$
\end{proof}

\subsection{Ample Divisors on $\M_{1,n}(m)$}\label{S:AmpleDivisors}
In this section, we prove that $\M_{1,n}(m)$ is projective. More precisely, we show that
$$
\psi-\delta_0-s\lambda \text{ is ample on } \M_{1,n}(m) \text{ if } m < s < m+1.
$$
In conjunction with the discrepancy calculation of Propostion \ref{P:Discrepancy}, this will allow us to prove our main result (Corollary \ref{C:MainResult}). Our proof of ampleness proceeds via Kleiman's criterion, i.e. we will show that the given divisors have positive intersection on all curves in  $\M_{1,n}(m)$. We begin with two preparatory lemmas.
\begin{lemma}\label{L:BasicNefness}
\begin{itemize}
\item[]
\item[(1)] $\lambda$ is nef on $\M_{1,n}$,
\item[(2)] $\psi-\delta$ is ample on $\M_{0,n}$,
\item[(3)]$\psi-\delta_0-\lambda$ is nef on $\M_{1,n}$,
\item[(3)] $\psi_i$ is nef on $\M_{0,n}$ for each $i=1, \ldots, n$.
\end{itemize}
\end{lemma}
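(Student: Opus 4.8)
\emph{Overview.} Statements (1) and (4) are essentially citations; (2) and (3) both follow from a single inductive device based on the forgetful morphism $\pi$ that forgets the last marked point and stabilizes.

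\emph{Parts (1) and (4).} For (1): $\lambda$ on $\M_{1,n}$ is pulled back from $\M_{1,1}$ along the morphism forgetting $p_2,\dots,p_n$ (the identification $\lambda=\pi^*\lambda$ is canonical, since contracting unstable rational bubbles does not change $H^0(\omega)$), and $\lambda$ is an ample $\Q$-divisor on $\M_{1,1}\cong\P^1$; hence $\lambda$ is nef. (This is also the simplest case of the nonnegativity of the Hodge bundle, \`a la Cornalba--Harris / Fujita.) For (4): by Kapranov's construction, $\psi_i$ is the pullback of $\O_{\P^{n-3}}(1)$ under a birational morphism $\M_{0,n}\to\P^{n-3}$, so $\psi_i$ is semiample, in particular nef (the case $n=3$ being vacuous).

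\emph{Part (2).} I would induct on $n$ via $\pi:\M_{0,n}\to\M_{0,n-1}$, which presents $\M_{0,n}$ as the universal curve. Writing $\psi^{(n)},\delta^{(n)}$ for $\psi,\ \delta=\sum_S\delta_{0,S}$ on $\M_{0,n}$, the comparison formulas $\psi_i=\pi^*\psi_i+\delta_{0,\{i,n\}}$ $(i<n)$ and $\pi^*\delta_{0,S}=\delta_{0,S}+\delta_{0,S\cup\{n\}}$ give, after a short bookkeeping check,
\[
\psi^{(n)}=\pi^*\psi^{(n-1)}+\psi_n+\sum_{j=1}^{n-1}\delta_{0,\{j,n\}},\qquad \delta^{(n)}=\pi^*\delta^{(n-1)}+\sum_{j=1}^{n-1}\delta_{0,\{j,n\}},
\]
and therefore the key identity
\[
\psi^{(n)}-\delta^{(n)}=\pi^*\bigl(\psi^{(n-1)}-\delta^{(n-1)}\bigr)+\psi_n .
\]
The base case $n=4$ holds since $\M_{0,4}\cong\P^1$ and $\deg(\psi-\delta)=4-3=1>0$. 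For the inductive step, set $A:=\psi^{(n-1)}-\delta^{(n-1)}$ (ample by hypothesis); one must show $\pi^*A+\psi_n$ is ample on $\M_{0,n}$. Here $\psi_n$ is $\pi$-ample --- on each fiber (a stable $(n-1)$-pointed genus-zero curve) it restricts to $\omega+\sum_{j<n}(\text{marked points})$, which has positive degree on every component --- and it is nef by (4). Since $A$ is ample and products of nef classes are nonnegative, Nakai--Moishezon applies: for irreducible $V\subseteq\M_{0,n}$ of dimension $d$, every term of $(\pi^*A+\psi_n)^d\cdot V$ is $\geq 0$, and the term $\binom{d}{d-e}\pi^*(A^e)\cdot\psi_n^{d-e}\cdot V$ with $e=\dim\pi(V)$ is strictly positive because $\psi_n$ is ample on the fibers of $V\to\pi(V)$ while $A$ is ample on $\pi(V)$. (Alternatively, invoke directly the known ampleness of $\kappa_1=\psi-\delta$ on $\M_{0,n}$.)

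\emph{Part (3), and the main obstacle.} The identical device applied to $\pi:\M_{1,n}\to\M_{1,n-1}$, together with $\lambda=\pi^*\lambda$ and $\delta_0^{(n)}=\pi^*\delta_0^{(n-1)}+\sum_{j=1}^{n-1}\delta_{0,\{j,n\}}$, gives
\[
\psi^{(n)}-\delta_0^{(n)}-\lambda^{(n)}=\pi^*\bigl(\psi^{(n-1)}-\delta_0^{(n-1)}-\lambda^{(n-1)}\bigr)+\psi_n .
\]
As $\psi_n$ is nef on $\M_{1,n}$ (nefness of $\psi$-classes on $\M_{g,n}$) and pullback preserves nefness, the induction reduces to the base $\M_{1,1}$, where $\delta_0=0$ and $\psi_1=\lambda$ (the cotangent line at the marked point is canonically the Hodge bundle, even over the cuspidal boundary point); hence $\psi-\delta_0-\lambda=0$ there, which is nef. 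The only genuinely non-formal point in the whole argument is the inductive step of (2) --- upgrading the evidently nef-and-big class $\pi^*A+\psi_n$ to an \emph{ample} class --- which is precisely where relative ampleness of $\psi_n$ along the fibers of $\pi$ must be combined with ampleness downstairs; everything else is ``pullback of ample is nef'' plus the two citations.
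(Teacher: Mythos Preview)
Your proof is correct, and it takes a genuinely different route from the paper's.

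The paper proves (2) and (3) by a single device: embed $\M_{0,n}$ (resp.\ $\M_{1,n}$) into $\M_g$ by attaching fixed curves of genus $\geq 2$ to the marked points, and pull back the Cornalba--Harris result that $s\lambda-\delta$ is ample on $\M_g$ for $s>11$. One checks $i^*(12\lambda-\delta)=\psi-\delta$ on $\M_{0,n}$ and, using $\delta_{irr}=12\lambda$, that $i^*(11\lambda-\delta)=\psi-\delta_0-\lambda$ on $\M_{1,n}$. So both parts are literally restrictions of a single nef/ample class on $\M_g$.

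Your approach instead inducts along the forgetful map $\pi$ via the $\kappa$-class identity $\psi-\delta=\pi^*(\psi-\delta)+\psi_n$ (and its genus-one analogue with $\delta_0$ and $\lambda=\pi^*\lambda$). For (3) this is very clean: everything reduces to $\psi_1=\lambda$ on $\M_{1,1}$ plus nefness of $\psi_n$ on $\M_{1,n}$. For (2) you supply the honest Nakai--Moishezon step to upgrade nef-plus-relatively-ample to ample, which is correct since the fibers of $\pi$ are one-dimensional (so the case split $\dim\pi(V)\in\{d,d-1\}$ covers everything).

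What each buys: the paper's argument is shorter and handles (2), (3) uniformly, at the cost of importing Cornalba--Harris on $\M_g$. Your argument never leaves genus $\leq 1$, makes transparent that $\psi-\delta=\kappa_1$ and that it grows by $\psi_n$ under $\pi$, and for (3) is essentially free once you grant nefness of $\psi$-classes on $\M_{1,n}$. Note that this last citation is comparable in depth to what the paper invokes, so neither approach is strictly more elementary; they just rely on different standard inputs.
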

\begin{proof}
(1) and (2) are well-known. For (2), consider the closed immersion
$$i:\M_{0,n} \rightarrow \M_{g}$$
defined by attaching fixed curves of genus $g_1, \ldots, g_n \geq 2$ to the $n$ marked points, where $g$ is chosen so that $g_1+ \ldots +g_n=g$. By \cite[Theorem 1.3]{CH}, the divisor $s\lambda-\delta$ is ample on $\M_{g}$ if $s>11$. Since
\begin{align*}
&i^*\lambda=0,\\
&i^*\delta=\delta-\psi,
\end{align*}
we conclude that $i^*(12\lambda-\delta)=\psi-\delta$ is ample on $\M_{0,n}$.

The proof of (3) is similar. Consider the closed immersion $$i:\M_{1,n} \rightarrow \M_{g}$$
defined by attaching fixed curves of genus $g_1, \ldots, g_n \geq 2$ to the $n$ marked points, where $g$ is chosen so that $g_1+ \ldots +g_n+1=g$. Using the same formulae as above and the relation $\delta_{irr}=12\lambda$ on $\M_{1,n}$, one checks that
$$
i^*(11\lambda+\psi-\delta)=\psi-\delta_0-\lambda,
$$
so $\psi-\delta_0-\lambda$ is nef on $\M_{1,n}$.
\end{proof}

For our second lemma, suppose $(f:\C \rightarrow B, \sigman)$ is a family of $m$-stable curves over a smooth curve $B$ and that every fiber of $f$ contains an elliptic $l$-fold point, for some $l \geq 1$. Then $f$ admits a section $\tau$ such that $\tau(b) \in \C_{b}$ is an elliptic $l$-fold point for all $b \in B$, and we may consider the normalization $\tilde{\C} \rightarrow \C$ along $\tau$. Let $\{\tilde{\tau}_i\}_{i=1}^{l}$ be the sections lying over $\tau$, and let $S_i$ be the subset of marked points lying on the $i^{th}$ connected component of the normalization. The normalization $\tilde{\C}$ decomposes as:
$$
\coprod_{i=1}^{l}(\tilde{\C}_i, \tilde{\tau}_i, \{\tilde{\sigma}_j\}_{j \in S_i}),
$$
where each $(\tilde{\C}_i, \tilde{\tau}_i, \{\tilde{\sigma}_j\}_{j \in S_i})$ is a family of semistable genus zero curves over $B$. If we assume, in addition, that the generic fiber of $\C$ has no disconnecting nodes, then the generic fiber of each $\tilde{\C}_i$ is smooth. In this case, for each $i$ satisfying $|S_i| \geq 2$, there is a well-defined stabilization map, i.e. a birational map $\tilde{\C}_i \rightarrow \tilde{\C}_i^s$ obtained by blowing down the semistable components in the fibers of $\tilde{\C}_i$. Let $\tilde{\tau}_i^s$ and $\tilde{\sigma}_i^{s}$ be the images of $\tilde{\tau}_i$ and $\tilde{\sigma_i}$ under this map.

Without loss of generality, we may assume the $S_i$ are ordered so that $|S_i| \geq 2$ for $i=1, \ldots, k$ and $|S_{k+1}|=|S_{k+2}|=\ldots=|S_{l}|=1$. Then, for each $i=1, \ldots, k$, each $(\tilde{\C}^s_i, \tilde{\tau}_i^s, \{\tilde{\sigma}^s_j\}_{j \in S_i})$ is a stable family of genus zero curves over $B$, so we have a map
$$
c^{s}:B \rightarrow \M_{0,|S_i|+1} \times \ldots \times \M_{0,|S_k|+1},
$$
and we may define
\begin{align*}
\psi^{s}.B:=\deg_{B}(c^{s})^*\psi\\
\delta_0^{s}.B:=\deg_{B}(c^{s})^{*}\delta.
\end{align*}
The following lemma compares the intersection number $(\psi-\delta_0).B$ with the intersection number $(\psi^{s}-\delta^{s}_0).B$.

\begin{lemma}\label{L:StableComparison}
Suppose $(f: \C \rightarrow B, \sigman)$ is a family of $m$-stable curves satisfying
\begin{enumerate}
\item Every fiber of $\C$ has an elliptic $l$-fold point, for some $l \geq 1$,
\item The generic fiber of $\C$ has no disconnecting nodes.
\end{enumerate}
With notation as above, we have
$$(\psi-\delta_0).B=(\psi^s-\delta^{s}_0).B+l\lambda.B,$$
\end{lemma}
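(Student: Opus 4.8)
The plan is to pull both sides back to the normalization $\tilde{\C}=\coprod_{i=1}^{l}\tilde{\C}_i$ of $\C$ along $\tau$ and to use Proposition \ref{P:LambdaNumber} as the main input.

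Since $\pi\colon\tilde{\C}\to\C$ is an isomorphism away from $\tau$, it carries each section $\sigma_j$ to $\tilde{\sigma}_j$ and gives a multiplicity-preserving bijection between the disconnecting nodes in the fibers of $\C$ and the nodes in the fibers of the $\tilde{\C}_i$ (every node of a connected genus zero curve being disconnecting). Hence
$$
\psi.B=\sum_{i=1}^{l}\sum_{j\in S_i}\psi_{\tilde{\sigma}_j}.B
\qquad\text{and}\qquad
\delta_0.B=\sum_{i=1}^{l}\delta_0(\tilde{\C}_i).B,
$$
where $\psi_{\tilde{\sigma}_j}.B=-\tilde{\sigma}_j^{2}$ and $\delta_0(\tilde{\C}_i).B$ is the number of nodes in the fibers of $\tilde{\C}_i$ counted with multiplicity (as usual we may resolve the $A_k$-singularities of the relevant total spaces without altering any of these numbers). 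Proposition \ref{P:LambdaNumber} gives $\tilde{\tau}_i^{2}=\lambda.B$ for each $i$, so on setting
$$
\Psi.B:=\sum_{i=1}^{l}\Big(\sum_{j\in S_i}\psi_{\tilde{\sigma}_j}.B-\tilde{\tau}_i^{2}\Big)=\psi.B-l\,\lambda.B
$$
it suffices to prove $\Psi.B-\delta_0.B=(\psi^{s}-\delta_0^{s}).B$, which will follow by comparing the contributions of the individual $\tilde{\C}_i$.

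For $i>k$ (so that $|S_i|=1$), $m$-stability of the glued-back fiber forces every fiber of $\tilde{\C}_i$ to be irreducible --- a reducible two-pointed genus zero curve would contain a rational component off the attaching point carrying at most two special points, contradicting $m$-stability of the ambient curve \cite[Definition 3.7]{SmythEI} --- so $\tilde{\C}_i\to B$ is a $\P^1$-bundle with two disjoint sections, which necessarily have opposite self-intersection; the contribution of $\tilde{\C}_i$ to $\Psi.B-\delta_0.B$ therefore vanishes, as it must since these components do not enter $\psi^{s}$ or $\delta_0^{s}$. For $i\le k$, the contribution of $\tilde{\C}_i$ to $\Psi.B-\delta_0.B$ is $\sum_{j\in S_i}\psi_{\tilde{\sigma}_j}.B-\tilde{\tau}_i^{2}-\delta_0(\tilde{\C}_i).B$, and I claim this equals $\sum_{j\in S_i}\psi_{\tilde{\sigma}_j^{s}}.B-(\tilde{\tau}_i^{s})^{2}-\delta_0(\tilde{\C}_i^{s}).B$, the contribution of the $i$-th factor to $(\psi^{s}-\delta_0^{s}).B$ --- here one uses that $\psi$ on $\M_{0,|S_i|+1}$ is the sum of all $|S_i|+1$ cotangent classes, the attaching point included. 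To prove the claim I analyze the stabilization $\tilde{\C}_i\to\tilde{\C}_i^{s}$ as a composition of blow-downs of $(-1)$-curves in fibers: because $m$-stability of the glued-back fiber prevents any rational component off the attaching point from having fewer than three special points, the contractions may be ordered so that each contracted curve meets $\tilde{\tau}_i$ transversally in one point, meets the rest of its fiber in a single node, and carries no marked point. Each such blow-down raises $\tilde{\tau}_i^{2}$ by $1$, lowers $\delta_0(\tilde{\C}_i).B$ by $1$, and fixes every $\tilde{\sigma}_j^{2}$, hence leaves the displayed quantity unchanged; iterating proves the claim. Summing over all $i$ then yields $\Psi.B-\delta_0.B=(\psi^{s}-\delta_0^{s}).B$, which is the assertion.

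The heart of the argument is the structural fact invoked twice above: the classification, via $m$-stability of the fibers of $\C$, of which rational components the stabilization maps $\tilde{\C}_i\to\tilde{\C}_i^{s}$ can contract. It is precisely the absence of semistable rational tails and chains in the nodal part of an $m$-stable curve that makes $\psi-\delta_0$ transform so cleanly under stabilization; the accompanying bookkeeping of self-intersections and node counts under blow-downs is routine.
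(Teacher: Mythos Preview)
Your argument is correct and follows essentially the same route as the paper's proof: pass to the normalization $\tilde{\C}$, track how $\psi-\delta_0$ changes under the stabilization maps $\tilde{\C}_i\to\tilde{\C}_i^s$, and invoke Proposition~\ref{P:LambdaNumber} to identify $\tilde{\tau}_i^2$ with $\lambda.B$. The only presentational differences are that you resolve $A_k$-singularities first and then contract $(-1)$-curves one at a time (whereas the paper blows down each semistable $R$ directly, noting that an $A_k$ node contributes $k$ to both the drop in node count and the gain in $\tilde{\tau}_i^2$), and that you handle the $|S_i|=1$ components explicitly (showing their fibers are irreducible and their contribution vanishes), a point the paper passes over silently when writing $\sum_{i=1}^l(\tilde{\tau}_i^s)^2$ in its displayed computation.
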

\begin{proof}
As in the discussion preceding the lemma, we have a diagram
\[
\xymatrix{
&\tilde{\C} \ar[dr]^{\pi} \ar[dl]_{\phi}&\\
\tilde{\C}^{s}&&\C\\
}
\]
where $\pi$ is the normalization of $\C$ along $\tau$, and $\phi$ is the birational stabilization map. 

Since $\pi$ is an isomorphism in an open neighborhood of every node and every section $\sigma_i$, $\pi$ does not effect the relevant intersection numbers, i.e. we have
\begin{align*}
\delta_0.B&=\#\{\text{Nodes in fibers of $\C$}\}=\#\{\text{Nodes in fibers of $\tilde{\C}$}\},\\
\psi.B&=-\sum_{i=1}^{k}\sigma_i^2=-\sum_{i=1}^{k}\tilde{\sigma}_i^2,
\end{align*}
where the nodes are counted with suitable multiplicity.

To analyze the effect of $\phi$ on these intersection numbers, observe that if $R \simeq \P^{1}$ is a component of a fiber of $\tilde{\C}_i$ contracted by $\phi$, then $R$ meets the rest of the fiber at a single node and the section $\tilde{\tau}_i$ passes through $R$. If the attaching node is an $A_{k}$-singularity of the total space, then blowing down $R$ decreases the number of nodes in $\tilde{\C}$ by $k$ (counted with multiplicity), while raising the self-intersection of the section $\tilde{\tau}_i$ by $k$. Thus,
\begin{align*}
\psi^{s}.B-\delta_0^{s}.B&=-\sum_{i=1}^{n}(\tilde{\sigma}^s_i)^2-\sum_{i=1}^{l}(\tilde{\tau}^{s}_i)^2-\#\{\text{Nodes in fibers of $\tilde{\C}^{s}$}\}\\
&=-\sum_{i=1}^{n}\tilde{\sigma}_i^2-\sum_{i=1}^{l}\tilde{\tau}_i^2-\#\{\text{Nodes in fibers of $\tilde{\C}$}\}\\
&=\psi.B-\delta_0.B-\sum_{i=1}^{l}(\tilde{\tau}_i)^2
\end{align*}
Applying Proposition \ref{P:LambdaNumber}, we see that the last line is equivalent to $(\psi-\delta_0).B-l\lambda.B$, as desired.
\end{proof}

\begin{proposition}\label{P:Nefness}
If $s \in \Q \cap [m,m+1]$, then $\psi-\delta_0-s\lambda$ is nef on $\M_{1,n}(m)$. 
\end{proposition}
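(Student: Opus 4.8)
I would verify the Kleiman criterion directly: show $(\psi-\delta_0-s\lambda)\cdot B\ge 0$ for every irreducible complete curve $B\subset\M_{1,n}(m)$. Since the normalization $\M_{1,n}(m)^*\to\M_{1,n}(m)$ is finite and surjective, it is equivalent to prove nefness on $\M_{1,n}(m)^*$; after replacing $B$ by its normalization we obtain a family $(f:\C\to B,\sigman)$ of $m$-stable curves, so that $\psi_i\cdot B=-\sigma_i^2$, $\delta_{0,S}\cdot B$ counts disconnecting nodes of type $S$, and $\lambda\cdot B=\deg_B f_*\omega_{\C/B}$, and the intersection-theoretic machinery of Section~\ref{S:1ParameterFormulas} applies. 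I would induct on $n$. First, if the generic fiber of $f$ has a disconnecting node it has one of type $S$ for some fixed $S\in[n]_{2}^{n-m}$, and every fiber acquires such a node; normalizing $\C$ along the section of nodes of type $S$ splits it as a genus-one family $(\C_1\to B)$ with marked points $([n]\setminus S)\cup\{\bullet\}$ and a stable genus-zero family $(\C_2\to B)$ with marked points $S\cup\{\star\}$, glued along sections $\rho_1\sim\rho_2$. Using $f_*\omega_{\C/B}\cong (f_1)_*\omega_{\C_1/B}$, $\psi_\bullet\cdot B=-\rho_1^2$, $\psi_\star\cdot B=-\rho_2^2$, and $\delta_{0,S}\cdot B=\rho_1^2+\rho_2^2$, one gets the identity
$$\left(\psi-\delta_0-s\lambda\right)\cdot B=\left(\psi-\delta_0-s\lambda\right)\big|_{\M_{1,([n]\setminus S)\cup\{\bullet\}}(m)^*}\cdot B\;+\;\left(\psi-\delta\right)\big|_{\M_{0,S\cup\{\star\}}}\cdot B,$$
in which the first term is $\ge 0$ by the inductive hypothesis (the index set has size $n-|S|+1<n$) and the second is $\ge 0$ since $\psi-\delta$ is ample on $\M_{0,N}$ (Lemma~\ref{L:BasicNefness}). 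So I may assume the generic fiber of $f$ has no disconnecting node.

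\textbf{Equisingular families.} Suppose $f$ factors through an equisingular stratum $\E_l$ with $l\ge 1$, so every fiber has an elliptic $l$-fold point. By Proposition~\ref{P:BoundaryStratification} the connected component of $\E_l$ containing $B$ is (an open subscheme of) the projective bundle $\P\bigl(\oplus_{i}\psi_{\tau_i}\bigr)$ over a product of spaces $\M_{0,N}$, the $\psi_{\tau_i}$ being $\psi$-classes and hence nef (Lemma~\ref{L:BasicNefness}); since a direct sum of nef line bundles is a nef bundle, $\O_{\P}(1)$ is nef. Proposition~\ref{P:LambdaNumber}, together with the identification $\tilde\tau_i^*\O(-\tilde\tau_i)\cong\O_{\P}(1)$ built into this description, then gives $\lambda\cdot B=\tilde\tau_i^2=-\deg_B\O_{\P}(1)\le 0$. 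Now Lemma~\ref{L:StableComparison} yields $(\psi-\delta_0)\cdot B=(\psi^s-\delta_0^s)\cdot B+l\,\lambda\cdot B$, where $\psi^s-\delta_0^s$ is pulled back from a product of spaces $\M_{0,N}$ and is therefore nef, so
$$\left(\psi-\delta_0-s\lambda\right)\cdot B=(\psi^s-\delta_0^s)\cdot B+(l-s)\,\lambda\cdot B\;\ge\;(s-l)\bigl(-\lambda\cdot B\bigr)\;\ge\;0,$$
because $1\le l\le m\le s$. At $s=l=m$ the bound is $0$, which is why the endpoints of $[m,m+1]$ give classes that are nef but not ample.

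\textbf{The general case.} For an arbitrary $B$ as above, let $l$ be the type of the elliptic point on the generic fiber, with $l=0$ if that fiber is smooth or irreducible nodal. After a harmless finite base change there is a family $(g:\D\to B,\sigman)$ of $l$-stable curves with the same generic fiber and a birational contraction $\D\dashrightarrow\C$ over $B$, and Corollary~\ref{C:CaseIIFormulas} gives
$$\left(\psi-\delta_0-s\lambda\right)\cdot B=\left(\psi-\delta_0-s\lambda\right)^{\D}\cdot B+\sum_{i,j}\bigl(l_{ij}-s\bigr),$$
where each $l_{ij}$ is the level of an elliptic bridge permitted for $l$-stability but not for $m$-stability, whence $l<l_{ij}\le m\le s$. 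When $l\ge 1$ the stratum $\E_l\subset\S_{1,n}(l)$ is closed (Corollary~\ref{C:BasicStratification}(4)), so $\D$ factors through it and the equisingular case applies to $\D$; when $l=0$ the family $\D$ lives over $\M_{1,n}$, where $\psi-\delta_0-\lambda$ and $\lambda$ are nef (Lemma~\ref{L:BasicNefness}) and $\lambda^{\D}\cdot B\le 0$ unless the generic fiber is smooth, in which case $\D$ is isotrivial and $\lambda^{\D}\cdot B=0$. Combining the lower bound $\left(\psi-\delta_0-s\lambda\right)^{\D}\cdot B\ge(s-l)\bigl(-\lambda^{\D}\cdot B\bigr)$ from the equisingular case with the relation $\lambda\cdot B=\lambda^{\D}\cdot B+\sum_i k_i$ of Corollary~\ref{C:CaseIIFormulas} and the inequalities $l<l_{ij}\le m\le s$ yields $\left(\psi-\delta_0-s\lambda\right)\cdot B\ge 0$.

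\textbf{Main obstacle.} I expect the delicate point to be precisely this last piece of bookkeeping. Because $\lambda$ fails to be nef on $\M_{1,n}(m)$, the correction terms $l_{ij}-s$ are \emph{nonpositive}, so they cannot simply be dropped; passing from $\C$ to an equisingular model $\D$ simultaneously lowers $\lambda$ and lowers $\psi-\delta_0$, and one must see that the residual positivity of $(\psi-\delta_0-s\lambda)^{\D}\cdot B$ — governed, via Proposition~\ref{P:LambdaNumber}, by the same blow-up count $\sum_i k_i$ that records the change in $\lambda$ — dominates $\sum_{i,j}(s-l_{ij})$. This is where the hypothesis $s\ge m$ and the level inequalities $l_{ij}\le m$ enter essentially; the remainder is routine manipulation of the formulas of Section~\ref{S:1ParameterFormulas} and the elementary nefness statements of Lemma~\ref{L:BasicNefness}.
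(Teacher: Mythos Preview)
Your overall architecture matches the paper's: reduce to families with no disconnecting node in the generic fiber, handle the equisingular case via Lemma~\ref{L:StableComparison} and Proposition~\ref{P:LambdaNumber}, then pass from an $l$-stable model $\D$ to the $m$-stable model $\C$ using Corollary~\ref{C:CaseIIFormulas}. Your inductive version of Reduction~I and the observation that $\O_{\P}(1)$ is nef (so $\lambda^{\D}\!\cdot B\le 0$ in the equisingular case) are both fine variants.

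The gap is in the final bookkeeping, and it is exactly the step the paper isolates as Reduction~II. Your ``Main obstacle'' paragraph asserts that the residual positivity of $(\psi-\delta_0-s\lambda)^{\D}\!\cdot B$ is ``governed by the same blow-up count $\sum_i k_i$'', but it is not: the equisingular bound gives $(\psi-\delta_0-s\lambda)^{\D}\!\cdot B\ge (s-l)(-\lambda^{\D}\!\cdot B)$, and $-\lambda^{\D}\!\cdot B$ bears no a priori relation to $\sum_i k_i$. Combining with $\sum_{i,j}(l_{ij}-s)\ge (l+1-s)\sum_i k_i$ you get only
\[
(\psi-\delta_0-s\lambda)\cdot B\;\ge\;(s-l)\bigl(-\lambda^{\D}\!\cdot B\bigr)+(l+1-s)\sum_i k_i,
\]
and for $l<m$ the second term is negative with no control from the first. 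Relatedly, your $l=0$ clause has a sign error: $\lambda$ is \emph{nef} on $\M_{1,n}$, so $\lambda^{\D}\!\cdot B\ge 0$, not $\le 0$; your claim that a smooth generic fiber forces $\D$ isotrivial is also false.

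The paper closes this by first disposing of the case $\lambda\cdot B\ge 0$ via the Picard relation $\psi-\delta_0-s\lambda=(n-s)\lambda+\sum_{S}(|S|-1)\delta_{0,S}$ (both summands non-negative after your Reduction~I), and then assuming $\lambda\cdot B<0$. With that in hand, substituting $-\lambda^{\D}\!\cdot B=-\lambda\cdot B+\sum_i k_i$ into the displayed inequality gives $(\psi-\delta_0-s\lambda)\cdot B\ge (l-s)\lambda\cdot B+\sum_i k_i\ge 0$. So your outline becomes a complete proof once you insert this case split on the sign of $\lambda\cdot B$; without it the argument does not close.
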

\begin{proof}
Fix $s \in \Q \cap [m,m+1]$. To prove that $\psi-\delta_0-s\lambda$ is nef on $\M_{1,n}(m)$, it suffices to show that $\psi-\delta_0-s\lambda$ has non-negative degree on any family of $m$-stable curves $(f: \C \rightarrow B, \sigman)$ over a smooth curve $B$. We begin with three reductions.
\begin{reductionI}
We may assume that the generic fiber of $\C$ has no disconnecting nodes.
\end{reductionI}
\begin{proof}
We may decompose a generic fiber of $\C$ as
$$C=E \cup R_1 \cup \ldots \cup R_k,$$
where $E$ is the minimal elliptic subcurve of $C$, and $R_1, \ldots, R_k$ are rational tails meeting $E$ in a single node \cite[Lemma 3.1]{SmythEI}. Since the limit of a disconnecting node is a disconnecting node, there exist sections $\tau_1, \ldots, \tau_k:B \rightarrow \C$ such that 
\begin{enumerate}
\item[(1)] $\tau_i(b) \in C_b$ is a disconnecting node for all $b \in B$.
\item[(2)] $\tau_i(b) \in E \cap R_i$ over the generic point of $B$.
\end{enumerate}
Let $\tilde{\C} \rightarrow \C$ be the normalization of $\C$ along $\cup_{i=1}^{k}\tau_i$, so we have
$$\tilde{\C}=\E \coprod \R_1 \coprod \ldots \coprod \R_k,$$
where  $\E \rightarrow B$ is a family of genus one curves and each $\R_i \rightarrow B$ is a family of genus zero curves. Mark the two sections of $\tilde{\C}$ lying above $\tau_i$ as $\tau_i'$ and $\tau_i''$, so that $(\tilde{\C}, \sigman, \{\tau_i'\}_{i=1}^{k}, \{\tau_i''\}_{i=1}^{k})$ decomposes as

$$(\E, \{\sigma_i\}_{i \in S_0}, \{\tau_i'\}_{i=1}^{k}) \coprod (\R_1, \{\sigma_i\}_{i \in S_1}), \tau_1'') \coprod \ldots \coprod (\R_k, \{\sigma_i\}_{i \in S_k}), \tau_k''),$$
where $\{S_0, S_1, \ldots, S_k\}$ is some partition of $[n]$. Note that  $(\E, \{\sigma_i\}_{i \in S_0}, \{\tau_i'\}_{i=1}^{k})$ is an $(|S_0|+k)$-pointed $m$-stable curve, and each $(\R_j, \{\sigma_i\}_{i \in S_j}), \tau_j'')$ is an $(|S_j|+1)$-pointed stable curve of genus zero. Let $c_0:B \rightarrow \S_{1,|S_0|+k}(m)$ and $c_j: B \rightarrow \S_{0,|S_j|+1}$ be the corresponding classifying maps, and define
\begin{align*}
\lambda^i.B:&=\deg_{B} c_i^*\lambda\\
(\psi-\delta_0)^i.B:&=\deg_B c_i^*(\psi-\delta_0)
\end{align*}

Since the degree of $\lambda$ is zero on any family of genus zero stable curves, we have
\begin{align*}
\lambda.B&=\lambda^0.B+\sum_{j=1}^{k} \lambda^j.B=\lambda^0.B
\end{align*}
Furthermore, since $(\psi-\delta)$ is ample on $\M_{0,n}$ (Lemma \ref{L:BasicNefness}), we have
\begin{align*}
(\psi-\delta_0).B&=(\psi-\delta_0)^0.B+\sum_{j=1}^{k}(\psi-\delta_0)^j.B>(\psi-\delta_0)^0.B.
\end{align*}
Altogether, we obtain
$$
(\psi-\delta_0-s\lambda).B>(\psi-\delta_0-s\lambda)^0.B.
$$
Since $(\E \rightarrow B, \{\sigma_i\}_{i \in S_0}, \{\tau_i'\}_{i=1}^{k})$  is an $m$-stable curve with no disconnecting nodes in the generic fiber, it suffices to prove the non-negativity of $\psi-\delta_0-s\lambda$ on families $m$-stable curves satisfying this extra condition.
\end{proof}
\begin{reductionII}
We may assume that $\lambda.B<0$.
\end{reductionII}
\begin{proof}
Using the relations in Proposition \ref{P:PicardGroup}, we have
$$
(\psi-\delta_0-s\lambda).B=\sum_{S \subset [n]_2^{n-m}}(|S|-1)\delta_{0,S}.B+(n-s)\lambda.B.
$$
By the first reduction, we have $\delta_{0,S}.B>0$ for each $S \subset [n]_2^{n-m}.$ Furthermore, $n-s \geq 0$, since $s \leq m+1$ and $m \leq n-1$. Thus, if $\lambda.B \geq 0$, the intersection number $(\psi-\delta_0-s\lambda).B$ is non-negative. 
\end{proof}
\begin{reductionIII}
We may assume the generic fiber of $\C$ contains an elliptic $l$-fold point, for some $l \geq 1$.
\end{reductionIII}
\begin{proof}
Since $\lambda$ is nef on $\M_{1,n}$ (Lemma \ref{L:BasicNefness}), Corollary \ref{C:CaseIIFormulas} (applied with $l=0$) implies that $\lambda.B \geq 0$ for any $m$-stable curve with nodal generic fiber. Thus, by the second reduction, we may assume that the generic fiber of $\C$ contains an elliptic $l$-fold point, for some $l \geq 1.$
\end{proof}

Now suppose that \emph{every} fiber of $\C$ contains an elliptic $l$-fold point. In this case, Lemma \ref{L:StableComparison} implies that
$$
(\psi-\delta_0-s\lambda).B=(\psi^s-\delta_0^s).B+(l-s)\lambda.B,
$$
where $(\psi^s-\delta_0^s).B$ is the sum of the intersection numbers of $\psi-\delta$ on the families of genus zero stable curves obtained by normalizing $\C$ along the locus of elliptic $l$-fold points, and stabilizing the resulting families of semistable curves. By Lemma \ref{L:BasicNefness} (2), $(\psi^s-\delta_0^s).B>0$, so
$$
(\psi-\delta_0-s\lambda).B>(l-s)\lambda.B.
$$
Since $l \leq m  \leq s$ and $\lambda.B<0$, this intersection number is non-negative.

It remains to consider the possibility that there is a finite set of points $b_1, \ldots, b_t \in B$ where the fibers of $\C$ acquire elliptic $k$-fold points with $k>l$. Since the restriction of $f$ to $B-\{b_1, \ldots, b_t\}$ is an $l$-stable curve, we have a classifying map $c_l:B \rightarrow \S_{1,n}(l)$ and we set
\begin{align*}
\lambda^l.B:=&\deg_{B}c_l^*\lambda,\\
(\psi-\delta_0)^l.B:=&\deg_Bc_l^*(\psi-\delta_0).
\end{align*}
In the preceding paragraph, we saw that
$$(\psi-\delta_0-s\lambda)^l.B>(l-s)\lambda^l.B$$
On the other hand, Corollary \ref{C:CaseIIFormulas} says
\begin{align*}
\lambda.B-\lambda^l.B&=\sum_{i=1}^{t}k_i,\\
(\psi-\delta_{0}).B-(\psi-\delta_0)^l.B&=\sum_{i=1}^{t}\sum_{j=1}^{k_i}l_{ij},
\end{align*}
where it takes $k_i$ blow-ups/contractions to transform the $l$-stable fiber $\C^l_{b_i}$ into the $m$-stable fiber $\C_{b_i}$, and $l_{ij}$ is the level of the elliptic bridge contracted at the $j^{th}$-step. Thus, we obtain
$$
(\psi-\delta_0-s\lambda).B-(\psi-\delta_0-s\lambda)^l.B=\sum_{i=1}^{t}\sum_{j=1}^{k_i}(l_{ij}-s).
$$
We have $l_{ij} \geq l+1$, since we only contract elliptic bridges of level $l+1, \ldots, m$ in transforming an $l$-stable fiber to an $m$-stable fiber. Thus, we obtain
$$
\sum_{i=1}^{t}\sum_{j=1}^{k_i}(l_{ij}-s) \geq \sum_{i=1}^{t}\sum_{j=1}^{k_i}(l+1-s)=(l+1-s)\sum_{i=1}^{t}k_i.
$$
Combining the preceding inequalities, we obtain
\begin{align*}
(\psi-\delta_0-s\lambda).B&=(\psi-\delta_0-s\lambda)^l.B+\sum_{i=1}^{t}\sum_{j=1}^{k_i}(l_{ij}-s)\\
&>(l-s)\lambda^l.B+(l-s+1)\sum_{i=1}^{t}k_i\\
&=(l-s)\left(\lambda^l.B+\sum_{i=1}^{t}k_i\right)+\sum_{i=1}^{t}k_i,\\
&=(l-s)\lambda.B+\sum_{i=1}^{t}k_i,
\end{align*}
which is non-negative since $l \leq m \leq s$ and $\lambda.B<0$.
\end{proof}

To upgrade from nefness to ampleness, we will use Kleiman's criterion \cite[Theorem 2.19]{Kol}. Unfortunately, Kleiman's criterion can fail for algebraic spaces \cite[Excercise 2.19.3]{Kol}. Thus, we must first show that Kleiman's criterion applies to $\M_{1,n}(m)^*$ without assuming a priori that $\M_{1,n}(m)^*$ is a scheme.

\begin{lemma}\label{L:Kleiman} 
Any divisor in the interior of the nef cone of $\M_{1,n}(m)^*$ is ample.
\end{lemma}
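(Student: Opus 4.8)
The content of Lemma \ref{L:Kleiman} is that Kleiman's criterion holds for $Y:=\M_{1,n}(m)^*$; the pathology behind its possible failure for algebraic spaces stems from $Y$ being, a priori, only an algebraic space, so the plan is to transfer the question to an honest projective scheme lying over $Y$. First I would resolve the birational contraction $\phi:\M_{1,n}\dashrightarrow Y$. Since $\M_{1,n}$ is projective and, by Corollary \ref{C:RegularLocus}, $\phi$ is regular in codimension one, a desingularization of the normalization of the closure of the graph of $\phi$ produces a smooth projective variety $W$ together with a birational morphism $p:W\to\M_{1,n}$ and a birational surjective morphism $q:W\to Y$ compatible with $\phi$; because $Y$ is normal, $q_*\O_W=\O_Y$.

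Fix now a $\Q$-divisor $D$ in the interior of $\Nef(Y)$. As $N^1(Y)_{\Q}$ is finite dimensional and $\Nef(Y)$ is dual to the (full-dimensional) cone $\NE(Y)$, the hypothesis says precisely that $D\cdot z>0$ for every nonzero $z\in\NE(Y)$. I claim it is then enough to prove that $q^*D$ is semiample on $W$. Granting this, choose $N\gg 0$ so that $|Nq^*D|$ is base-point free; it defines a morphism $g:W\to Z$ onto a projective scheme $Z$, and a curve $C\subset W$ is contracted by $g$ if and only if $q^*D\cdot C=0$. Since $q^*D\cdot C=D\cdot q_*C$ and $D$ is strictly positive on $\NE(Y)\setminus\{0\}$, this occurs exactly when $q_*C=0$, i.e.\ when $q$ contracts $C$. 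Thus $g$ and $q$ contract the same curves, hence (as $q$ has connected fibers) each fiber of $q$ is collapsed by $g$, and the rigidity lemma yields a factorization $g=h\circ q$ with $h:Y\to Z$. The morphism $h$ is proper and surjective, and quasi-finite (a fiber $h^{-1}(z)$ is the image under $q$ of the $g$-fiber $g^{-1}(z)$, which consists of $q$-contracted subvarieties), hence finite. Descending $\O_Z(1)$ along $q$ gives $ND\equiv h^*\O_Z(1)$, so $D$ is the pullback of an ample divisor along the finite morphism $h$ and is therefore ample; in particular $Y$ is a projective scheme.

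The substantive point that remains — and the main obstacle — is the semiampleness of $q^*D$ on $W$, which is not automatic since $q^*D$ lies on the boundary of $\Nef(W)$ (it is trivial on the $q$-contracted curves). One first records that $\phi^*D:=p_*q^*D$ is nef on $\M_{1,n}$: for any curve $C$ one has $\phi^*D\cdot C=q^*D\cdot p^*C\ge 0$ because $p^*C$ is effective and $q^*D$ is nef. To conclude semiampleness I would invoke the base-point-free theorem: the map $\phi$ is a composition of the divisorial contractions occurring in the Hassett--Keel program for $\M_{1,n}$, so the relevant nef classes are of log-canonical type; combined with the explicit description of the $\phi$-exceptional locus as projective bundles over products of moduli of genus zero stable curves (Section \ref{S:Stratification}) and the strict positivity of $D$ on curves not contracted by $\phi$, this supplies the bigness and numerical input the theorem requires and forces $q^*D$ to be semiample. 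The argument of the preceding paragraph then applies verbatim, producing the finite morphism $h$ to a projective scheme, so that $D$ is ample and $\M_{1,n}(m)^*$ is projective.
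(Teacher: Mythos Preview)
Your overall strategy—resolve $\phi$ by a smooth projective $W$, pull back $D$, show $q^*D$ is semiample, and then factor through a finite map to a projective scheme—is a reasonable template, and your second paragraph (deducing ampleness of $D$ from semiampleness of $q^*D$) is essentially correct. The genuine gap is the third paragraph: you do not actually prove that $q^*D$ is semiample. Invoking ``the base-point-free theorem'' requires, in its standard form, that $aq^*D-(K_W+\Delta)$ be nef and big for some klt pair $(W,\Delta)$ and some $a>0$. But $q^*D$ is numerically trivial on every $q$-exceptional curve, so on those curves the class $aq^*D-K_W$ reduces to $-K_W$, and there is no reason this is nef on the exceptional fibers of an arbitrary resolution. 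Your appeal to ``log-canonical type'' and to the description of the strata $\E_l$ as projective bundles does not address this: the $\E_l$ live in the \emph{target} $Y$, whereas the $\phi$-exceptional locus in $\M_{1,n}$ consists of the boundary divisors $\Delta_{0,S}$ for $|S|>n-m$, and you would need precise discrepancy control on $W$, not on $Y$. A smaller issue: the sentence ``$\phi^*D\cdot C=q^*D\cdot p^*C$'' is not a valid identity for a birational $p$, since there is no flat pullback of the curve $C$; you should argue via strict transforms and the negativity lemma, or avoid this claim entirely (it is not needed for your main line anyway).

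The paper's proof is completely different and sidesteps semiampleness. It uses a criterion (from \cite[Lemma 4.9]{FedSmyth}) that Kleiman's theorem holds on a $\Q$-factorial proper space once every irreducible subvariety $Z$ meets some effective Cartier divisor properly. To verify this, the paper argues by induction on $m$: the open locus of $\M_{1,n}(m)^*$ where the curve has no elliptic $m$-fold point is isomorphic to an open subset of $\M_{1,n}(m-1)^*$, which is already known to be projective, so $Z$ may be assumed to lie in the closed stratum $\pi^{-1}(\E_m)$. There the explicit projective-bundle description of $\E_m$ over products of $\M_{0,k}$'s lets one produce an effective boundary divisor $\Delta_{0,T}$ meeting $Z$ properly. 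This argument uses only the stratification and $\Q$-factoriality, and needs no input from the MMP.
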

\begin{proof}
To show that Kleiman's criterion applies to $\M_{1,n}(m)^*$, we must show that for any irreducible subvariety
$$
Z \subset \NM{n}{m}
$$
there exists an effective Cartier divisor $E$ which meets $Z$ properly \cite[Lemma 4.9]{FedSmyth}. Since $\NM{n}{m}$ is $\Q$-factorial, it is enough to show that there exists an open affine subscheme of $\NM{n}{m}$ meeting $Z$.

Let $\pi: \NM{n}{m} \rightarrow \M_{1,n}(m)$ be the normalization map, and consider the stratification of $\NM{n}{m}$ induced by the equisingular stratification of $\M_{1,n}(m)$:
$$
\NM{n}{m}=\pi^{-1}(M_{1,n}) \coprod \pi^{-1}(\E_0) \coprod \ldots \coprod \pi^{-1}(\E_{m}).
$$
Using Proposition \ref{P:Ampleness} and induction on $m$, we may assume that $\NM{n}{m-1}$ is projective. Since the open set
$$
\pi^{-1}(M_{1,n}) \coprod \pi^{-1}(\E_0) \coprod \ldots \coprod \pi^{-1}(\E_{m-1}) \subset \NM{n}{m}
$$
is isomorphic to an open subset of $\NM{n}{m-1}$, every point has an open affine neighborhood. Thus, we may assume $Z \subset \pi^{-1}(\E_{m}).$

Evidently, it is sufficient to produce an effective Cartier divisor on $\M_{1,n}(m)$ which meets $\pi(Z)$ properly. $\pi(Z)$ lies in one of the irreducible components of $\E_{m}$, and by Proposition \ref{P:BoundaryStratification} these are each projective bundles of the form
$$
p: \P(\psi_1 \oplus \ldots \oplus \psi_k) \rightarrow \M_{0,|S_1|+1} \times \ldots \times \M_{0,|S_k|+1}.
$$
By construction, the divisor $\Delta_{0,S_1} \subset \M_{1,n}(m)$ restricts to a hyperplane subbundle
$$
\Delta_{0,S_1} \cap \P(\psi_1 \oplus \ldots \psi_k) \subset \P(\psi_1 \oplus \ldots \oplus \psi_k).
$$
If $Z$ meets $\Delta_{0,S_1}$ properly, we are done, since some multiple of $\Delta_{0,S_1}$ is Cartier. If not, then the map
$$
Z \rightarrow p(Z) \subset \M_{0,|S_1|+1} \times \ldots \times \M_{0,|S_k|+1}
$$
is finite. Since $M_{0,|S_1|+1} \times \ldots \times M_{0,|S_k|+1}$ is affine and $\dim p(Z)>1$, $p(Z)$ must meet some boundary divisor $\pi_i^*\Delta_{0,T}$, $T \subset S_i$. Equivalently, $Z$ meets the boundary divisor $\Delta_{0,T} \subset \M_{1,n}(m)$. Since some multiple of $\Delta_{0,T}$ is Cartier, we are done.
\end{proof}

Now we will upgrade our nefness result to an ampleness result by showing that $\psi-\delta_0-s\lambda$ remains ample under a small perturbation by boundary divisors.

\begin{proposition}\label{P:Ampleness} If $s \in \Q \cap (m,m+1)$, then $\psi-\delta_0-s\lambda$ is ample on $\M_{1,n}(m)$. In particular, $\M_{1,n}(m)$ is projective.
\end{proposition}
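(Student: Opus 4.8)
The plan is to deduce ampleness from the nefness already established, via Kleiman's criterion. By Proposition \ref{P:Nefness}, $\psi-\delta_0-s'\lambda$ is nef on $\NM{n}{m}$ for every $s'\in\Q\cap[m,m+1]$, and by Lemma \ref{L:Kleiman} a divisor in the interior of the nef cone of $\NM{n}{m}$ is ample; so it is enough to prove that, for $s\in(m,m+1)$, the class $\psi-\delta_0-s\lambda$ lies in the \emph{interior} of the nef cone. (As indicated by the proof of Lemma \ref{L:Kleiman}, this runs by induction on $m$, the base case $m=0$ being the classical projectivity of $\S_{1,n}$, and the inductive step using the projectivity of $\NM{n}{m-1}$.)

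To prove interiority I would use that the Néron--Severi space $N^1_{\Q}(\NM{n}{m})$ is spanned by $\lambda$ together with the boundary classes $\{\delta_{0,S}\}_{S\in[n]_2^{n-m}}$ (Proposition \ref{P:PicardGroup}): it suffices to exhibit, for each generator $F$ on this list, an $\epsilon>0$ with $\psi-\delta_0-s\lambda+\epsilon F$ still nef, for then the convex hull of the divisors $\psi-\delta_0-s\lambda\pm\epsilon F$ is a neighbourhood of $\psi-\delta_0-s\lambda$ contained in the (convex) nef cone. Perturbation in the $\lambda$-direction is immediate, since $\psi-\delta_0-s\lambda\pm\epsilon\lambda=\psi-\delta_0-(s\mp\epsilon)\lambda$ is nef by Proposition \ref{P:Nefness} as soon as $s\mp\epsilon\in[m,m+1]$, which holds for small $\epsilon$ because $s$ is interior to the interval.

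The substance is to show that $D':=\psi-\delta_0-s\lambda\pm\epsilon\delta_{0,S_0}$ is nef for $S_0\in[n]_2^{n-m}$ and $\epsilon$ small, and for this I would re-run the proof of Proposition \ref{P:Nefness} with $D'$ in place of $\psi-\delta_0-s\lambda$. Reduction 1 survives because it uses only the ampleness of $\psi-\delta$ on products of $\S_{0,\bullet}$ (Lemma \ref{L:BasicNefness}(2)), and the restriction of $D'$ to such a product differs from $\psi-\delta$ by $\pm\epsilon$ times a boundary class, hence is still ample for $\epsilon$ small, ampleness being an open condition; Reductions 2 and 3 concern only the signs of $\lambda.B$ and $\delta_{0,T}.B$ and the inequality $n-s\geq 0$, none of which is affected since $|S_0|-1\pm\epsilon>0$ for $\epsilon$ small. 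In the principal case, where every fibre of $\C\to B$ carries an elliptic $l$-fold point, Lemma \ref{L:StableComparison} should give
\[
D'.B=\bigl(\psi^s-\delta_0^s\pm\epsilon\,\delta_{0,S_0}^s\bigr).B+(l-s)\lambda.B,
\]
the first summand being $\geq 0$ because $\psi-\delta\pm\epsilon\delta_{0,S_0}$ is ample on the products of $\S_{0,\bullet}$ obtained by normalizing along the locus of elliptic $l$-fold points and stabilizing, and the second being $>0$ because, after Reductions 2 and 3, $\lambda.B<0$ while $l\leq m<s$; the finitely-many-exceptional-fibres case would be handled by Corollary \ref{C:CaseIIFormulas} exactly as in Proposition \ref{P:Nefness}, the contracted elliptic bridges having level $l_{ij}\geq l+1$. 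The key point is that the inequality $m<s$ is now \emph{strict}, which is precisely the room needed to absorb the $\epsilon$-perturbation. Once $D'$ is nef, interiority follows, Lemma \ref{L:Kleiman} gives ampleness of $\psi-\delta_0-s\lambda$ on $\NM{n}{m}$, hence projectivity of $\NM{n}{m}$; and since $\lambda,\psi,\delta_0$ are $\Q$-Cartier on $\M_{1,n}(m)$ and the finite surjection $\NM{n}{m}\to\M_{1,n}(m)$ pulls $\psi-\delta_0-s\lambda$ back to the ample class just produced, $\psi-\delta_0-s\lambda$ is ample on $\M_{1,n}(m)$ as well.

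The hardest part will be the third paragraph: checking that perturbing by $\delta_{0,S_0}$ does not destroy any of the (often strict) inequalities in the proof of Proposition \ref{P:Nefness}. Concretely this means tracking how $\delta_{0,S_0}$ restricts under normalization along loci of elliptic $l$-fold points, under stabilization of the resulting genus-zero families, and under the blow-up/contraction sequence of Corollary \ref{C:CaseIIFormulas}, and confirming that the slack afforded by $m<s$ dominates the $\epsilon$-correction at each step. An alternative route, avoiding this bookkeeping, would be to apply Kleiman's criterion directly: since $\psi-\delta_0-s'\lambda$ is nef for all $s'\in[m,m+1]$, a vanishing $(\psi-\delta_0-s\lambda).z=0$ on $\NE(\NM{n}{m})$ forces both $\lambda.z=0$ and $(\psi-\delta_0).z=0$, and one would rule this out using the morphism $\NM{n}{m}\to\S_{1,1}$ recording the $j$-invariant of the minimal elliptic subcurve (for which $\lambda$ is the pullback of an ample class) together with ampleness of $\psi-\delta_0$ restricted to its fibres.
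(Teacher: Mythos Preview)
Your approach is essentially the paper's own: apply Kleiman via Lemma \ref{L:Kleiman}, absorb the $\lambda$-perturbation into $s$, and for the $\delta_{0,S}$-perturbations re-run the proof of Proposition \ref{P:Nefness}. The paper perturbs by all $\delta_{0,S}$ simultaneously rather than one at a time, but this is immaterial. Where you leave the exceptional-fibres case as ``bookkeeping to be checked,'' the paper pins it down by choosing $c<\frac{s}{m}-1$: since each blow-up/contraction step absorbs at most $m$ disconnecting nodes, one has $\sum_S\epsilon_S(\delta_{0,S}.B-\delta_{0,S}^l.B)\geq -cm\sum_ik_i>(m-s)\sum_ik_i>-\sum_ik_i$, which exactly cancels against the gain $(\psi-\delta_0-s\lambda).B-(\psi-\delta_0-s\lambda)^l.B>\sum_ik_i$ coming from $l_{ij}\geq l+1$. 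This is precisely the ``slack afforded by $m<s$'' you anticipate, made explicit.

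One caution: your proposed alternative route via a morphism $\NM{n}{m}\to\S_{1,1}$ recording the $j$-invariant of the minimal elliptic subcurve does not work. An $m$-stable curve with an elliptic $l$-fold point ($l\geq 1$) has no smooth or nodal elliptic subcurve at all---its minimal elliptic subcurve is the union of rational branches through the singular point---so there is no $j$-invariant to record and no such morphism exists. Stick with the perturbation argument.
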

\begin{proof}
Fix $s \in \Q \cap (m,m+1)$. It is sufficient to show that $\pi^*(\psi-\delta_0-s\lambda)$ is ample, where $\pi:\NM{n}{m} \rightarrow \M_{1,n}(m)$ is the normalization map. By Proposition \ref{P:PicardGroup},
$$
\Pic(\NM{n}{m}) \otimes \Q=\Q\{\lambda, \delta_{0,S}: S \subset [n]_{2}^{n-m}\}.
$$
Thus, by Lemma \ref{L:Kleiman}, it is enough to show that there exists $c \in \Q_{>0}$ such that
$$
(\psi-\delta_0-s\lambda)+\epsilon_{\lambda}\lambda+\sum_{S \in [n]_2^{n-m}}\epsilon_{S}\delta_{0,S}
$$
is nef, for any choice of $\epsilon_{\lambda}, \epsilon_{S} \in \Q \cap (-c,c).$ Clearly, we may pick $c$ small enough that $(s-c, s+c) \in (m,m+1)$. Replacing $s$ by $s+\epsilon_{\lambda}$, it suffices to show that 
$$
(\psi-\delta_0-s\lambda)+\sum_{S \in [n]_2^{n-m}}\epsilon_{S}\delta_{0,S}
$$
is nef for any $\epsilon_{S} \in \Q \cap (-c,c).$ 

Since $\psi-\delta_0$ is ample on $\M_{0,n}$ (Lemma \ref{L:BasicNefness}), we may choose $c$ sufficiently small so that
\begin{itemize}
\item[(1)] $c<\frac{s}{m}-1$.
\item[(2)]  $(\psi-\delta_0)+\sum_{S \subset [k]_2^k}\epsilon_{S}\delta_{0,S}$  is ample on $\M_{0,k}$, for all $3 \leq k \leq n$ and $\epsilon_{S} \in (c,-c)$.
\end{itemize}
Now fix $c$ satisfying (1) and (2), and fix $\epsilon_{S} \in \Q \cap (-c,c)$.
We claim that
$$
(\psi-\delta_0-s\lambda)+\sum_{S \in [n]_2^{n-m}}\epsilon_{S} \delta_{0,S}
$$
has positive degree on any one-parameter family of $m$-stable curves $(f:\C \rightarrow B, \sigman)$. The proof is essentially identical to the proof of Proposition \ref{P:Nefness}, but we will indicate how the proof needs to be modified at each step.

\begin{reductionI}
We may assume that the generic fiber of $\C$ has no disconnecting nodes.
\end{reductionI}
\begin{proof}
As in the proof of Proposition \ref{P:Nefness}, we decompose $\C=\E \cup \R_1 \cup \ldots \cup R_k,$ where $\E \rightarrow B$ is a family of $m$-stable curves whose the general fiber has no disconnecting nodes, and each $\R_i \rightarrow B$ is a stable family of genus zero curves. By condition (2) in our choice of $c$,
$(\psi-\delta_0)+\sum_{S \in [n]_2^{n-m}}\epsilon_{S} \delta_{0,S}$ has positive degree on each of the families $\R_i \rightarrow B$. Arguing as in Proposition \ref{P:Nefness}, we see that it is sufficient to prove the nefness of $(\psi-\delta_0-s\lambda)+\sum_{S \in [n]_2^{n-m}}\epsilon_{S} \delta_{0,S}$ on $\E \rightarrow B$.
\end{proof}

\begin{reductionII}
We may assume that $\lambda.B<0$.
\end{reductionII}
\begin{proof}
Using the relations in Proposition \ref{P:PicardGroup}, we have
$$
(\psi-\delta_0-s\lambda).B+\sum_{S \subset [n]_2^{n-m}} \epsilon_{S} \delta_{0,S}=\sum_{S \subset [n]_2^{n-m}}(|S|-1+\epsilon_{S})\delta_{0,S}.B+(n-s)\lambda.B.
$$
Since $|S| \geq 2$ and $|\epsilon_{S}|<1$, the coefficients $(|S|-1+\epsilon_{S})$ are positive. Arguing precisely as in the proof of Proposition \ref{P:Nefness}, we may assume that $\lambda.B<0$.
\end{proof}

\begin{reductionIII}
We may assume the generic fiber of $\C$ contains an elliptic $l$-fold point, for some $l \geq 1$.
\end{reductionIII}
\begin{proof}
Follows precisely as in the proof of Proposition \ref{P:Nefness}
\end{proof}

Now suppose that \emph{every} fiber of $\C$ has an elliptic $l$-fold point. Then Lemma \ref{L:StableComparison} gives
$$(\psi-\delta_0-s\lambda).B+\sum_{S \subset [n]_2^{n-m}} \epsilon_{S} \delta_{0,S}.B=(\psi^s-\delta^{s}_0).B+\sum_{S \subset [n]_2^{n-m}} \epsilon_{S} \delta_{0,S}^s.B+(l-s)\lambda.B.$$
Our choice of $c$ ensures that $(\psi^s-\delta^{s}_0).B+\sum_{S \subset [n]_2^{n-m}} \epsilon_{S} \delta_{0,S}^s.B$ is positive, i.e. 
$$(\psi-\delta_0-s\lambda).B+\sum_{S \subset [n]_2^{n-m}} \epsilon_{S} \delta_{0,S}.B>(l-s)\lambda.B.$$
Since $l \leq m <s$ and $\lambda.B<0$, the total intersection number is positive.

It remains to consider the possibility that there is a finite set of points $b_1, \ldots, b_t \in B$, where the fibers of $\C$ acquire elliptic $k$-fold points with $k>l$. Since the restriction of $f$ to $B-\{b_1, \ldots, b_t\}$ is an $l$-stable curve, we have a classifying map $c_l:B \rightarrow \S_{1,n}(l)$, and we set
\begin{align*}
\lambda^l.B:=&\deg_{B}c_l^*\lambda,\\
\psi^l.B:=&\deg_Bc_l^*\psi,\\
\delta_{0,S}^l.B:=&\deg_Bc_l^*\delta_{0,S}.\\
\end{align*}
In the preceding paragraph, we saw that
$$(\psi-\delta_0-s\lambda)^l.B+\sum_{S \subset [n]_2^{n-m}} \epsilon_{S} \delta_{0,S}^l.B \geq 0,$$
so it suffices to show that
$$
(\psi-\delta_0-s\lambda).B-(\psi-\delta_0-s\lambda)^l.B+\sum_{S \subset [n]_2^{n-m}} \epsilon_{S} (\delta_{0,S}.B-\delta_{0,S}^l.B) \geq 0.
$$
The proof of Proposition \ref{P:Nefness} shows that 
\begin{align*}
(\psi-\delta_0-s\lambda).B-(\psi-\delta_0-s\lambda)^l.B>(l-s+1)\sum_{i=1}^{t}k_i \geq \sum_{i=1}^{t}k_i,
\end{align*}
where it takes $k_i$ blow-ups/contractions to transform the $l$-stable fiber over $b_i$ into the $m$-stable fiber.

On the other hand, it is easy to see that
$
\delta_{0}.B-\delta_{0}^l.B \geq -m\sum_{i=1}^{t}k_i,
$
since each of the $k_i$ contractions used to transform the $l$-stable fiber over $b_i$ into the $m$-stable fiber over $b_i$ absorbs no more than $m$ nodes. Thus, we obtain
$$\sum_{S \subset [n]_2^{n-m}} \epsilon_{S}(\delta_{0,S}.B-\delta_{0,S}^l.B) \geq -cm\sum_{i=1}^{t}k_i \geq (m-s) \sum_{i=1}^{t}k_i>-\sum_{i=1}^{t}k_i,$$
where $-cm \geq (m-s)$ follows from condition (1) in our choice of $c$.
Combining the previous two equations, we obtain
$$
(\psi-\delta_0-s\lambda).B-(\psi-\delta_0-s\lambda)^l.B+\sum_{S \subset [n]_2^{n-m}} \epsilon_{S} (\delta_{0,S}.B-\delta_{0,S}^l.B) \geq 0,
$$
as desired.
\end{proof}

\begin{corollary}\label{C:TestCurves}
For any $n \geq 0$, there exists a family of $n$-pointed stable curves $(\pi: \C \rightarrow B, \sigman)$ over a smooth complete curve $B$ such that the generic fiber of $\pi$ is smooth and the only singular fibers of $\pi$ are irreducible nodal curves.
\end{corollary}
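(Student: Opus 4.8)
The plan is to prove the statement by induction on $n$, producing at each stage a smooth, relatively minimal, non-isotrivial elliptic surface $\pi\colon\C\to B$ over a smooth complete curve $B$, together with $n$ pairwise disjoint sections $\sigma_1,\dots,\sigma_n$ lying in the smooth locus of $\pi$, whose only singular fibers are irreducible nodal curves (i.e. $I_1$ fibers). Any such family is a family of $n$-pointed stable curves of genus one of the desired kind, and it is automatically non-constant because its $j$-invariant is non-constant; the case $n=0$ reduces to $n=1$ by forgetting the section. For the base case $n=1$ (in fact for all $n\le 9$) I would take the rational elliptic surface $\C=\mathrm{Bl}_{p_1,\dots,p_9}\P^2\to\P^1$ attached to a general pencil of plane cubics: it has $\chi(\O_\C)=1$, exactly twelve irreducible nodal ($I_1$) fibers, smooth general fiber, and its nine exceptional curves are nine pairwise disjoint sections.

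For the inductive step $n\to n+1$ the key is to exhibit a complete irreducible curve $D\subset\C$ which dominates $B$, is disjoint from $\Sigma:=\bigcup_i\sigma_i(B)$, is disjoint from the finite set $N\subset\C$ of nodes of the singular fibers, and meets each singular fiber transversally. Granting $D$, I would let $\mu\colon B'\to\C$ be the normalization of $D$ composed with its inclusion, so that $\rho:=\pi\circ\mu\colon B'\to B$ is a finite surjection, étale over the finitely many critical values of $\pi$ by transversality, and then form the base change $\C':=\C\times_BB'\to B'$. Since $\C$ is smooth and $\rho$ is étale over the critical values, $\C'$ is again a smooth, relatively minimal elliptic surface whose only singular fibers are the $I_1$ pullbacks of the singular fibers of $\C$, and it is non-isotrivial because $j_{\C'}=j_\C\circ\rho$. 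Pulling back $\sigma_1,\dots,\sigma_n$ gives $n$ pairwise disjoint sections of $\C'$ in its smooth locus, and the diagonal $b'\mapsto(\mu(b'),b')$ gives an $(n{+}1)$-st section, which lies in the smooth locus because $\mu(B')\cap N=\varnothing$ and is disjoint from the others because $\mu(B')\cap\Sigma=\varnothing$; so $\C'\to B'$ with these $n+1$ sections satisfies the inductive hypothesis at level $n+1$.

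To produce $D$ I would argue as follows. The sections $\sigma_i(B)$ are pairwise disjoint irreducible curves, and adjunction together with the canonical bundle formula for elliptic surfaces gives $\sigma_i(B)^2=-\chi(\O_\C)<0$, where non-isotriviality guarantees $\chi(\O_\C)\ge1$ (a non-isotrivial elliptic surface over a curve has at least one degenerate fiber, and $12\,\chi(\O_\C)$ is the sum of the Euler numbers of the fibers). Hence the intersection form on $\Sigma$ is negative definite, so $\Sigma$ can be contracted: there is a birational morphism $c\colon\C\to\bar\C$ onto a normal projective surface with $c^{-1}(\bar p_i)=\sigma_i(B)$ and $c$ an isomorphism elsewhere, so that $\C\setminus\Sigma\cong\bar\C\setminus\{\bar p_1,\dots,\bar p_n\}$. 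It then suffices to find a complete irreducible curve in $\bar\C$ avoiding the points $\bar p_i$ and the (finitely many) images of $N$ whose preimage in $\C$ is transverse to the singular fibers; a general member of $|kH|$ for $H$ ample and $k\gg0$ does this by Bertini, and it dominates $B$ since an ample divisor meets every fiber.

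The main obstacle I anticipate is not a single hard theorem but the bookkeeping needed to see that all the inductive hypotheses propagate: that after base change $\C'$ is still smooth, relatively minimal, non-isotrivial, with pairwise disjoint sections and only $I_1$ degenerations, and that the self-intersection computation $\sigma_i(B)^2=-\chi(\O_\C)$ (hence negativity, hence contractibility of $\Sigma$) is applied correctly at each stage. Once these points are in place, the contraction of $\Sigma$ and the Bertini construction of $D$ are routine, and the rational elliptic surface provides the base case essentially for free.
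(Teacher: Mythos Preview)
Your approach differs substantially from the paper's. The paper gives a two-line bootstrap off the projectivity of $\M_{1,n}(n-1)$ just established in Proposition~\ref{P:Ampleness}: a general complete-intersection curve $B\subset\M_{1,n}(n-1)$ avoids the codimension-two locus $\bigcup_{l\geq1}\E_l$, so the induced $(n-1)$-stable family has only nodal fibers and is therefore stable; since $\Delta_{irr}$ is the sole boundary divisor of $\M_{1,n}(n-1)$, every singular fiber is irreducible. Your direct inductive construction on elliptic surfaces is more elementary and independent of any moduli-theoretic input, which is attractive given that the paper invokes this corollary as a forward reference in Proposition~\ref{P:PushPull} and must explicitly note that no circularity results.

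However, there is a genuine gap in your inductive step. You claim that the contraction $c\colon\C\to\bar\C$ of $\Sigma=\bigcup_i\sigma_i(B)$ yields a normal \emph{projective} surface, and then run Bertini on $\bar\C$ to produce the multisection $D$. Negative-definiteness of the intersection form guarantees only that such a contraction exists as a normal algebraic space (Artin); it need not be a scheme, let alone projective. The obstruction is classical: when one contracts a smooth curve $E$ of genus $\geq1$ with self-intersection $<0$, the target can fail to carry any ample line bundle if no class in $E^{\perp}\subset\mathrm{NS}$ restricts to a torsion element of $\operatorname{Pic}^0(E)$. Your base case is fine---the sections are $(-1)$-curves on a rational surface---but after the first base change $B'\to\P^1$ the new base typically has $g(B')\geq1$, the sections $\sigma_i(B')$ are curves of that genus with self-intersection $-\chi(\O_{\C'})$, and there is no reason their normal bundles are torsion. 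To close the gap you would need either to prove directly that the nef and big class $\chi(\O_\C)\,F+\Sigma$ is semiample (which would give the projective contraction), or to construct $D$ without contracting---for instance, take $D$ general in a very ample series on $\C$ so that it meets $\Sigma$ transversally, base change, and then use the group law on the smooth locus of $\C'\to B'$ (possibly after a further finite \'etale base change enlarging the Mordell--Weil group) to translate the diagonal section off the pulled-back $\sigma_i'$.
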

\begin{proof}
Since $\M_{1,n}(n-1)$ is projective, a general complete-intersection curve $B \subset \M_{1,n}(n-1)$ will not intersect the codimension-two locus $\bigcup_{l \geq 1} \E_{l}$. The induced family $(\C \rightarrow B, \sigman)$ of $(n-1)$-stable curves has no elliptic $l$-fold points and is therefore stable. Since the only boundary divisor of $\M_{1,n}(n-1)$ is $\Delta_{irr}$, the only singular fibers of $\C \rightarrow B$ will be irreducible nodal.
\end{proof}

\begin{corollary}\label{C:MainResult}
Given $s \in \Q$ and $m, n \in \mathbb{N}$ satisfying $m<n$, we have
\begin{enumerate}
\item[]
\item $D(s)$ is big iff $s \in (12-n, \infty)$\\
\item $\M_{1,n}^s=
\begin{cases}
\M_{1,n} &\text{ iff } s \in (11, \infty)\\
\M_{1,n}(1) & \text{ iff } s \in (10,11]\\
\M_{1,n}(m)^* &\text{ iff } s \in (11-m,12-m)\text{ and $m \in \{2, \ldots, n-2\}$}\\
\M_{1,n}(n-1)^* & \text{ iff } s \in (12-n,13-n]\\
\end{cases}
$
\end{enumerate}
\end{corollary}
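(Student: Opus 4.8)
The plan is to reduce the statement to the intersection theory developed in Sections \ref{S:IntersectionTheory} and \ref{S:AmpleDivisors}. The first step is purely formal. Using the relations of Proposition \ref{P:StartingPicardGroup} --- in particular $\delta_{irr}=12\lambda$ and $\psi-\delta_0=n\lambda+\sum_{S\in[n]_2^{n}}(|S|-1)\delta_{0,S}$ --- one rewrites, in $\Pic_{\Q}(\M_{1,n})$,
$$
D(s)\;=\;s\lambda+\psi-\Delta\;=\;\psi-\delta_0-(12-s)\lambda\;=\;\tfrac{n+s-12}{12}\,\delta_{irr}+\sum_{S\in[n]_2^{n}}(|S|-1)\,\delta_{0,S},
$$
and, for $1\le m\le n-1$, Proposition \ref{P:PushPull} together with the relation $\Delta_{irr}=12\lambda$ on $\NM nm$ gives the analogous identity $\phi_{m,*}D(s)=\psi-\delta_0-(12-s)\lambda$ on $\NM nm$, where $\phi_m\colon\M_{1,n}\dashrightarrow\NM nm$ is the birational contraction of Section \ref{S:Discrepancy}. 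Thus $\phi_{m,*}D(s)$ is exactly the divisor $\psi-\delta_0-\sigma\lambda$ of Propositions \ref{P:Nefness} and \ref{P:Ampleness} with $\sigma=12-s$, and everything now comes down to combining those propositions with the discrepancy comparison of Proposition \ref{P:Discrepancy}.

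Now the chamber analysis. For $s>11$ the divisor $D(s)$ is ample on $\M_{1,n}$: when $s\in(11,12)$ this is Proposition \ref{P:Ampleness} with $m=0$, and when $s\ge12$ one writes $D(s)=(\psi-\delta_0-\tfrac12\lambda)+(s-\tfrac{23}{2})\lambda$, the sum of an ample divisor and a nonnegative multiple of the nef class $\lambda$ (Lemma \ref{L:BasicNefness}); hence $\M_{1,n}^s=\M_{1,n}$. For $s\in(11-m,12-m)$ with $1\le m\le n-1$ one has $12-s\in(m,m+1)$, so Proposition \ref{P:Ampleness} makes $\phi_{m,*}D(s)$ ample on $\NM nm$, while $s<12-m$ lets Proposition \ref{P:Discrepancy} identify $R(\M_{1,n},D(s))=R(\NM nm,\phi_{m,*}D(s))$; as the latter divisor is ample, $\M_{1,n}^s=\NM nm=\M_{1,n}(m)^*$ (and this is $\M_{1,n}(1)$ when $m=1$, that stack being smooth by Corollary \ref{C:Smoothness}).

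It remains to treat the two closed walls $s=12-m$ with $m\in\{1,n-1\}$ and the bigness claim. At $s=12-m$ the equality $R(\M_{1,n},D(s))=R(\NM nm,\phi_{m,*}D(s))$ still holds (Proposition \ref{P:Discrepancy}), so one must upgrade the \emph{nef} divisor $\phi_{m,*}D(12-m)=\psi-\delta_0-m\lambda$ (Proposition \ref{P:Nefness}) to an \emph{ample} one; this, not anything in the open chambers, is where the work is, since Proposition \ref{P:Ampleness} only covers interiors. For $m=n-1$ it is immediate: $\Pic_{\Q}(\NM n{n-1})=\Q\lambda$ (Proposition \ref{P:PicardGroup}, as $[n]_2^{1}=\varnothing$) with $\psi=n\lambda$ and $\delta_0=0$, so $\psi-\delta_0-(n-1)\lambda=\lambda$, which is ample because $12\lambda=\Delta_{irr}$ is effective on a Picard-rank-one projective variety. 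For $m=1$ I would apply Kleiman's criterion (valid for $\M_{1,n}(1)=\M_{1,n}(1)^*$ by Lemma \ref{L:Kleiman}) to $N:=\psi-\delta_0-\lambda$: the map $\phi_1$ is a \emph{morphism}, an isomorphism off $\Delta_{0,[n]}$, contracting $\Delta_{0,[n]}\cong\M_{1,1}\times\M_{0,n+1}$ onto $\E_1\cong\M_{0,n+1}$ by the second projection (Proposition \ref{P:BoundaryStratification}); one computes $N|_{\E_1}=(\psi-\delta)|_{\M_{0,n+1}}$, which is ample (Lemma \ref{L:BasicNefness}), and for a curve $C\not\subset\E_1$ one has $N\cdot C=\tfrac1d\,\phi_1^{*}N\cdot C'=\tfrac1d\,D(11)\cdot C'$ for the strict transform $C'\subset\M_{1,n}$, which is positive because $D(11)=\psi-\delta_0-\lambda$ is nef on $\M_{1,n}$, restricts to $0$ on $\M_{1,1}$, and $M_{1,n}$ carries no complete curve --- so a $D(11)$-trivial curve must lie in the boundary and, after stripping rational tails, reduces to an $\M_{1,1}$-fibre of $\Delta_{0,[n]}$, i.e.\ to a $\phi_1$-contracted curve.

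For bigness: when $s>12-n$ all coefficients of $D(s)=\tfrac{n+s-12}{12}\,\delta_{irr}+\sum_{S\in[n]_2^{n}}(|S|-1)\,\delta_{0,S}$ are strictly positive, so $D(s)\ge\epsilon\Delta$ for some $\epsilon>0$; since $M_{1,n}$ is affine, its boundary $\Delta\subset\M_{1,n}$ is big, hence so is $D(s)$. When $s\le12-n$, Proposition \ref{P:Discrepancy} (applicable since $s\le13-n$) gives $R(\M_{1,n},D(s))=R(\NM n{n-1},(n+s-12)\lambda)$, a non-positive multiple of the ample class $\lambda$; thus $H^0(\M_{1,n},mD(s))$ equals $k$ if $s=12-n$ and vanishes if $s<12-n$, so $D(s)$ is not big. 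Assembling the cases yields the stated dichotomy; the main obstacle throughout is the endpoint ampleness at the walls $s=12-m$ ($m\in\{1,n-1\}$), and in the case $m=1$ the geometric analysis of the contraction $\phi_1$ sketched above.
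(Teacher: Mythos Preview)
Your treatment of the open chambers $s\in(11-m,12-m)$, of the case $s>11$, and of the bigness of $D(s)$ is correct and matches the paper's argument (your bigness argument via $D(s)\ge\epsilon\Delta$ is a pleasant variant). Your handling of the wall $s=13-n$ is also correct and in fact cleaner than the paper's: observing that $\Pic_{\Q}(\M_{1,n}(n-1)^*)=\Q\lambda$ and that Proposition~\ref{P:Ampleness} already forces $\lambda$ to be ample is the right move.

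The gap is at the wall $s=11$. You invoke Lemma~\ref{L:Kleiman} and then check $N\cdot C>0$ for every curve $C\subset\M_{1,n}(1)$, but that is not what Lemma~\ref{L:Kleiman} says: it asserts that the \emph{interior of the nef cone} equals the ample cone, not that strictly nef implies ample. On a variety of dimension $>2$ a strictly nef (even nef and big) divisor need not be ample, so your curve-by-curve verification does not finish the job. Your computation of $N|_{\E_1}$ and the analysis of $D(11)$-trivial curves on $\M_{1,n}$ are on the right track, but they only establish strict nefness of $N$; to conclude ampleness you would still need to place $N$ in the interior of $\mathrm{Nef}(\M_{1,n}(1))$, e.g.\ by perturbing in all Picard directions as in the proof of Proposition~\ref{P:Ampleness} (which is exactly what fails at the endpoint $\sigma=m$ there).

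The paper takes a different route at both walls: rather than proving ampleness of $\phi_{m,*}D(12-m)$ directly, it argues that $\M_{1,n}^{12-m}=\M_{1,n}(m)^*$ is a formal consequence of the fact that $\M_{1,n}(m-1)^*\dashrightarrow\M_{1,n}(m)^*$ is a \emph{regular morphism} precisely when $m\in\{1,n-1\}$ (Corollary~\ref{C:Regularity}). The point is that once $\phi_1\colon\M_{1,n}\to\M_{1,n}(1)$ is an honest morphism and $D(11)=\phi_1^*\phi_{1,*}D(11)$ (zero discrepancy, Proposition~\ref{P:Discrepancy}), the wall divisor $D(11)$ sits on the common facet of the $\M_{1,n}$-ample and $\M_{1,n}(1)$-ample Mori chambers, and the contraction associated to that facet is $\phi_1$; equivalently $\phi_{1,*}D(11)$ is ample on the target by the contraction theorem. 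The paper does not spell this out, but that is the content of ``formal consequence''. Your approach trades this birational-geometry input for a direct ampleness check; it succeeds for $m=n-1$ because the Picard rank is one, but for $m=1$ it needs an additional argument beyond strict nefness.
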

\begin{proof}
Let us prove (2) first. Since $\delta_{irr}=12\lambda$, we have
$$
D(s):=s\lambda+\psi-\delta=(s-12)\lambda+\psi-\delta_0 \in \Pic_{\Q}(\M_{1,n}).
$$
Lemma \ref{L:BasicNefness} implies that $D(s)$ is ample on $\M_{1,n}$ for $s \in (11, \infty)$ since it lies in the interior of the convex hull of $\lambda$ and $\psi-\delta_0-\lambda$. This implies $\M_{1,n}^{s}=\M_{1,n}$ for $s \in (11, \infty)$.

Next, let us show that $s \in (11-m, 12-m)$ implies $
R(\M_{1,n}, D(s))=R(\NM{n}{m}, \phi_*D(s)).
$ for all $m \in \{1, \ldots, n-1\}$. Consider the birational contraction $\phi: \M_{1,n} \dashrightarrow \M_{1,n}(m)^*.$ By Proposition \ref{P:Discrepancy},
$
R(\M_{1,n}, D(s))=R(\NM{n}{m}, \phi_*D(s))
$
for all $s \in (11-m, 12-m)$. Using Proposition \ref{P:PushPull}, we have
$$
\phi_*D(s)=(s-12)\lambda + \psi-\delta_0 \in \Pic(\M_{1,n}(m)^*).
$$
Thus, Proposition \ref{P:Ampleness} implies that $\phi_*D(s)$ is ample on $\M_{1,n}(m)^*$ if $s \in (11-m, 12-m)$. It follows that
$$
R(\M_{1,n}, D(s))=R(\NM{n}{m}, \phi_*D(s))=\NM{n}{m},
$$
as desired. Finally, the fact that $\M_{1,n}^{12-m}=\M_{1,n}(m)$ iff $m=1$ or $m=n-1$ is a formal consequence of the fact that the rational map $\M_{1,n}(m-1) \dashrightarrow \M_{1,n}(m)$ is regular iff $m=1$ or $m=n-1$ (Corollary \ref{C:Regularity}).

It remains to prove (1). It is clear that $D(s)$ is big for $s > 12-n$ since $D(s)$ becomes ample on a suitable birational model of $\M_{1,n}$ (for all but finitely many values of $s$). On the other hand, if $s=12-n$, then we may consider $\phi: \M_{1,n} \dashrightarrow \M_{1,n}(n-1)$, and one easily checks that $\phi_*D(s) \equiv 0 \in N^1(\M_{1,n}(m))$. Thus, Proposition \ref{P:Discrepancy} implies that $H^0(\M_{1,n}, mD(s))=H^0(\M_{1,n}(m)$, $mD(s)) \leq 1$ for all $m \geq 0$, so $D(s)$ is not big.
\end{proof}
\subsection{$\S_{1,n}(m)$ is singular for $m \geq 6$}\label{S:Singularities}
In this section, we use intersection theory to prove that $\S_{1,n}(m)$ is singular for $m \geq 6$. By Lemma \ref{L:FormalSmoothness}, the singularities of $\S_{1,n}(m)$ depend only on $m$, so it is sufficient to prove that $\S_{1,7}(6)$ is singuar. The main idea is to study the discrepancies of the exceptional divisors of the regular birational contraction $\M_{1,7}(5) \rightarrow \M_{1,7}(6)$.
\begin{lemma}\label{L:CanonicalDivisor}
\begin{align*}
K_{\M_{1,n}} &\equiv \frac{n-11}{12}\Delta_{irr}+\sum_{S \subset [n]_{2}^{m}}(|S|-2)\Delta_{0,S}-\Delta_{0,[n]}
\end{align*}
\end{lemma}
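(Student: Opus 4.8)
The strategy is to obtain the formula from the canonical class of the \emph{stack} $\S_{1,n}$ (Mumford's formula), together with the standard correction for the passage to the coarse space $\M_{1,n}$, and then to rewrite everything by means of the relations of Proposition~\ref{P:StartingPicardGroup}. Throughout I read the sum in the statement as ranging over all $S$ with $2\le|S|\le n$; since the term $S=[n]$ contributes $(|S|-2)\Delta_{0,[n]}=(n-2)\Delta_{0,[n]}$, the net coefficient of $\Delta_{0,[n]}$ is $n-3$, the usual ``elliptic tail'' coefficient. (One should assume $n\ge 2$, so that the generic point of $\S_{1,n}$, and of every boundary divisor other than $\Delta_{0,[n]}$, has trivial automorphisms.)

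\emph{Step 1 (stack canonical class).} By the standard Grothendieck--Riemann--Roch computation underlying Mumford's formula (see \cite{HarMor}), one has on the stack $\S_{1,n}$
\[
K_{\S_{1,n}}\equiv 13\lambda+\psi-2\delta_{irr}-2\!\!\sum_{S\in[n]_2^{n}}\!\!\delta_{0,S}.
\]
Substituting $\delta_{irr}=12\lambda$ and $\psi=n\lambda+\sum_{S\in[n]_2^{n}}|S|\,\delta_{0,S}$ from Proposition~\ref{P:StartingPicardGroup} turns this into
\[
K_{\S_{1,n}}\equiv(n-11)\lambda+\!\!\sum_{S\in[n]_2^{n}}\!\!(|S|-2)\,\delta_{0,S}\equiv\tfrac{n-11}{12}\,\delta_{irr}+\!\!\sum_{S\in[n]_2^{n}}\!\!(|S|-2)\,\delta_{0,S}.
\]

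\emph{Step 2 (coarse space correction).} Let $p\colon\S_{1,n}\to\M_{1,n}$ be the coarse moduli map. Examining the codimension-one strata, one checks that $\Delta_{0,[n]}$ is the unique boundary divisor whose generic point has nontrivial automorphisms acting nontrivially on the normal bundle: its generic point is $E\cup_q\P^1$ with $\P^1$ carrying all $n$ marked points, and the elliptic involution of $(E,q)$ acts by $-1$ on the node-smoothing parameter (the normal direction to $\Delta_{0,[n]}$) and trivially in every other direction. Hence near such a point $\S_{1,n}\cong[\mathbb{A}^N/\mu_2]$ with $\mu_2$ acting by $t\mapsto -t$ on exactly one coordinate, and the elementary local computation --- the same one producing the coefficient $-3\delta_1$ in $K_{\M_g}$, cf.\ \cite{HarMor} --- gives $p^*K_{\M_{1,n}}\equiv K_{\S_{1,n}}-\Delta_{0,[n]}$, where on the right $\Delta_{0,[n]}$ is the reduced boundary substack. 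Since $p^*$ is the injection $\Pic_\Q(\M_{1,n})\hookrightarrow\Pic_\Q(\S_{1,n})$ under which the tautological line bundles and boundary cycles are identified with their namesakes (the abuse of notation set up in Section~\ref{S:PicardGroup}), this identity transports to $\M_{1,n}$.

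\emph{Step 3 (conclusion) and the main difficulty.} Combining Steps 1 and 2 yields
\[
K_{\M_{1,n}}\equiv\tfrac{n-11}{12}\,\Delta_{irr}+\!\!\sum_{S\in[n]_2^{n}}\!\!(|S|-2)\,\Delta_{0,S}-\Delta_{0,[n]},
\]
as claimed. The only genuinely delicate point is Step~2: one must confirm that $\Delta_{0,[n]}$ is the \emph{only} codimension-one locus forcing a correction and that the correction coefficient is exactly $-1$. This is the elliptic-tail discrepancy familiar from $\M_g$, but here it has to be tracked carefully through the canonical identifications $\Pic_\Q(\S_{1,n})\cong\Pic_\Q(\M_{1,n})$ and $\A^1_\Q(\S_{1,n})\cong\A^1_\Q(\M_{1,n})$ used throughout Section~\ref{S:PicardGroup}, precisely because it is along $\Delta_{0,[n]}$ that the stack and its coarse space disagree. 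Everything else is routine bookkeeping with Proposition~\ref{P:StartingPicardGroup}.
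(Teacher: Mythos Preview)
Your proof is correct and follows exactly the same route as the paper: Grothendieck--Riemann--Roch gives $K_{\S_{1,n}}=13\lambda-2\delta+\psi$, the relations of Proposition~\ref{P:StartingPicardGroup} convert this to boundary divisors, and the ramification of $\S_{1,n}\to\M_{1,n}$ along $\Delta_{0,[n]}$ supplies the final $-\Delta_{0,[n]}$ term. You merely spell out the ramification step (which the paper asserts in a single line) in more detail by identifying the elliptic involution on the generic point of $\Delta_{0,[n]}$ and its action on the normal direction.
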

\begin{proof}
A standard application of the Grothendieck-Riemann-Roch \cite[Section 3E]{HarMor} shows that
$$
K_{\S_{1,n}} = 13\lambda-2\delta+\psi \in \Pic(\S_{1,n})
$$
Using the relations in $\Pic(\S_{1,n})$ to rewrite this in terms of boundary divisors (Proposition 3.1), we have
$$
K_{\S_{1,n}} \equiv \frac{n-11}{12}\Delta_{irr}+\sum_{S \subset [n]_{2}^{m}}(|S|-2)\Delta_{0,S}$$
Finally, the map $\S_{1,n} \rightarrow \M_{1,n}$ is ramified along the divisor $\Delta_{0,[n]}$, so we obtain
$$
K_{\M_{1,n}} \equiv \frac{n-11}{12}\Delta_{irr}+\sum_{S \subset [n]_{2}^{m}}(|S|-2)\Delta_{0,S} -\Delta_{0,[n]},
$$
as desired.
\end{proof}

The following lemma says that we can detect singularities by studying the discrepancies of birational contractions.
\begin{lemma}\label{L:Discrep}
Suppose $\phi:X \rightarrow Y$ is a birational morphism of normal, projective varieties, such that $\phi(\Exc(\phi))$ is a finite collection of smooth points of $Y$. Then the discrepancy of any exceptional divisor of $\phi$ is at least $\dim Y-1$.
\end{lemma}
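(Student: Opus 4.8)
The plan is to localize the problem at a smooth point of $Y$ and then extract the bound from a single blow-up of that point. The first step is a reduction. The discrepancy $a(E;Y)$ of a $\phi$-exceptional prime divisor $E\subset X$ depends only on the divisorial valuation $v=\operatorname{ord}_E$ of the common function field and on an arbitrarily small neighbourhood of the center of $v$ on $Y$. Since $\phi$ is a morphism and $E\subseteq\Exc(\phi)$, that center is the point $p:=\phi(E)$, which by hypothesis is one of the finitely many smooth points of $\phi(\Exc(\phi))$. Replacing $Y$ by a smooth affine open $U\ni p$ and $X$ by $\phi^{-1}(U)$ preserves normality of $X$, keeps $\phi$ proper and birational, and does not change $v$. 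Thus I may assume $Y$ is smooth of dimension $d:=\dim Y$, with $p$ the center of $v$ on $Y$; if $d\le 1$ there are no exceptional divisors and the statement is vacuous, so assume $d\ge 2$.

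Second, let $\mu:Y'=\Bl_pY\to Y$ be the blow-up of the point, with exceptional divisor $F\cong\P^{d-1}$, so that $K_{Y'}=\mu^{*}K_Y+(d-1)F$, the standard adjunction formula for blowing up a smooth center of codimension $d$. Because the center of $v$ on $Y$ is $p$, the valuation $v$ is positive on the maximal ideal $\mathfrak m_p$ of $\O_{Y,p}$, and since $\I_F=\mathfrak m_p\O_{Y'}$ this says precisely that $\operatorname{ord}_v(F)=v(\mathfrak m_p)\ge 1$. Comparing $K_W-f^{*}K_Y$ with $K_W-g^{*}K_{Y'}$ on a common resolution $g:W\to Y'$ (available in characteristic zero), with $f=\mu\circ g$, and substituting the formula for $K_{Y'}$, yields the discrepancy cocycle relation
\[
a(E;Y)=a(E;Y')+(d-1)\,\operatorname{ord}_v(F).
\]

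Third, $Y'$ is smooth, hence has terminal (in particular canonical) singularities \cite{Kol}, so $a(E;Y')\ge 0$; this also covers the degenerate possibility $v=\operatorname{ord}_F$, where that term is $0$. Combining with $\operatorname{ord}_v(F)\ge 1$ gives $a(E;Y)\ge(d-1)\cdot 1=\dim Y-1$, as claimed (with equality exactly when $E$ is the exceptional divisor of $\mu$, though the statement does not require this). There is no deep obstacle here; the points that need care — rather than ingenuity — are verifying that the localization step is legitimate (that shrinking $Y$ to a neighbourhood of $p$ keeps $\phi$ proper and keeps $E$ in play as a $\phi$-exceptional divisor) and carrying out the routine bookkeeping behind the displayed cocycle relation and the identity $\operatorname{ord}_v(F)=v(\mathfrak m_p)$. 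Once $Y$ has been made smooth, the conclusion is immediate from the fact that smooth varieties are canonical.
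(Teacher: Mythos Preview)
Your argument is correct and follows the same core idea as the paper: localize at a smooth point $p\in Y$, blow it up once to pick up the discrepancy $d-1$, and then observe that any further exceptional divisor centered over $F$ can only add nonnegative discrepancy because the ambient variety is smooth. The only difference is packaging: the paper first resolves $X$ and invokes a factorization of $\phi$ into smooth blow-ups (so every exceptional $E$ literally dominates $E_1$), whereas you bypass that by using the discrepancy cocycle relation $a(E;Y)=a(E;Y')+(d-1)\operatorname{ord}_v(F)$ together with ``smooth $\Rightarrow$ canonical'' --- a slightly cleaner route that avoids the factorization citation.
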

\begin{proof}
Since the question is local on $Y$, we may assume that $Y$ is smooth and that $\phi(\Exc(\phi))=p$ is a single point of $Y$. Since the discrepancy of any exceptional divisor $E$ depends only on the behavior of $\phi$ around a generic point of $E$, it is sufficient to prove the lemma after passing to a resolution of singularities of $X$, i.e. we may assume that $X$ is smooth. By the universal property of blow-ups \cite[Proposition 1.43]{Debarre}, $\phi$ factors as
\[
\xymatrix{
X \ar[r]^{\phi_m} & X_{m} \ar[r]^{\epsilon_{m}}& X_{m-1} \ar[r]^{\epsilon_{m-1}} &\cdots  \ar[r]^{\epsilon_{2}} &X_{1} \ar[r]^{\epsilon_1}& Y,
}
\]
where each $\epsilon_{i}$ is a blow-up along a smooth center, and the restriction
$$
\phi_{m}|_{E}:E \rightarrow \phi_{m}(E)
$$
is birational for each $\phi$-exceptional divisor $E$. Thus, for the purpose of computing discrepancies, we may assume that $X=X_{m}$ and $\phi=\epsilon_{m} \circ \cdots \circ \epsilon_{1}$ is a composition of blow-ups along smooth centers. Since $\epsilon_{1}$ is the blow-up of $Y$ at $p$, we have
$$
\epsilon_{1}^{*}K_{Y}=K_{X_1}+(\dim Y-1)E_{1},
$$
where $E_{1}$ is the exceptional divisor of $\epsilon_{1}$. But since any other $\phi$-exceptional divisor $E$ is centered over $E_{1}$, its discrepancy must be at least $(\dim Y-1)$.
\end{proof}

\begin{corollary}\label{C:Singular}
$\SV{n}{m}$ is not smooth when $m  \geq 6$.
\end{corollary}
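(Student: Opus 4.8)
The strategy is to reduce the statement to the single stack $\S_{1,7}(6)$ and to prove that stack is singular by a discrepancy computation for the contraction $\S_{1,7}(5)\to\S_{1,7}(6)$.

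\textbf{Reduction to $\S_{1,7}(6)$.} By Lemma~\ref{L:FormalSmoothness}, $\S_{1,n}(m)$ is smooth at $[C,\pn]$ if and only if each local ring $\O_{C,p}$ has unobstructed deformations, and by the proof of Corollary~\ref{C:Smoothness} nodes and elliptic $l$-fold points with $l\leq 5$ are always unobstructed. So it is enough to prove that the elliptic $6$-fold point has obstructed deformations. Given this, for every $m\geq 6$ and $n>m$ the stratum $\E_{6}\subset\S_{1,n}(m)$ is nonempty --- its components are indexed by $6$-partitions of $[n]$ (Proposition~\ref{P:BoundaryStratification}), and $n\geq 7$ --- so $\S_{1,n}(m)$ is singular at any point of $\E_{6}$. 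Hence the theorem reduces to showing that $\S_{1,7}(6)$ is singular.

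\textbf{The contraction $\phi\colon\S_{1,7}(5)\to\S_{1,7}(6)$.} Since $6=n-1$ for $n=7$, Corollary~\ref{C:Regularity} shows that the $6$-stabilization map $\phi\colon\S_{1,7}(5)\to\S_{1,7}(6)$ is a regular birational morphism, and its source is smooth by Corollary~\ref{C:Smoothness}. Its exceptional divisors are exactly the boundary divisors $\Delta_{0,S}$ with $|S|=2$ (these are the $5$-stable curves that fail $6$-stability, i.e. carry an elliptic $6$-bridge). Moreover $\phi$ is an isomorphism over $\S_{1,7}(6)\smallsetminus\E_{6}$, so $\phi(\Exc(\phi))\subseteq\E_{6}$, which by Corollary~\ref{C:DimStrata} has pure codimension $7=\dim\S_{1,7}(6)$ and is therefore a finite set of points.

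\textbf{The discrepancy computation.} Suppose for contradiction that $\S_{1,7}(6)$ is smooth. Then $\Pic_{\Q}(\S_{1,7}(6))\cong\Pic_{\Q}(\NM{7}{6})=\Q\lambda$ by Proposition~\ref{P:PicardGroup}, so $K_{\S_{1,7}(6)}\equiv c\lambda$ for some $c$, and $\phi^{*}K_{\S_{1,7}(6)}\equiv c\lambda$ carries no boundary component. Writing $K_{\S_{1,7}(5)}=\phi^{*}K_{\S_{1,7}(6)}+\sum_{|S|=2}a_{S}\Delta_{0,S}$, each discrepancy $a_{S}$ is therefore the coefficient of $\Delta_{0,S}$ when $K_{\S_{1,7}(5)}$ is expanded in the basis $\{\lambda\}\cup\{\delta_{0,S}\}_{|S|=2}$ of $\Pic_{\Q}(\S_{1,7}(5))$ (Proposition~\ref{P:PicardGroup}). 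But Grothendieck--Riemann--Roch, exactly as in the proof of Lemma~\ref{L:CanonicalDivisor}, gives $K_{\S_{1,7}(5)}\equiv 13\lambda+\psi-2\delta$, and substituting $\psi=7\lambda+2\delta_{0}$ and $\delta=\delta_{irr}+\delta_{0}=12\lambda+\delta_{0}$ (Proposition~\ref{P:PicardGroup}) makes the boundary terms cancel, so $K_{\S_{1,7}(5)}\equiv -4\lambda$ and $a_{S}=0$ for all $S$. This contradicts Lemma~\ref{L:Discrep} --- whose proof, relying only on the universal property of blow-ups, applies verbatim to smooth Deligne--Mumford stacks --- which forces $a_{S}\geq\dim\S_{1,7}(6)-1=6$, since $\phi(\Exc(\phi))\subseteq\E_{6}$ is a finite collection of smooth points. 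Hence $\S_{1,7}(6)$ is not smooth, and the theorem follows.

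\textbf{Where the work is.} The arithmetic itself is trivial; the substantive points are (i) verifying that $\phi$ is a genuine regular morphism whose exceptional locus is $\bigcup_{|S|=2}\Delta_{0,S}$ and maps into the finite set $\E_{6}$ (this is precisely the content of Corollaries~\ref{C:Regularity} and~\ref{C:DimStrata} together with the explicit geometry of $6$-stabilization), and (ii) running the discrepancy inequality of Lemma~\ref{L:Discrep} on stacks rather than schemes. The genuinely clever ingredient is conceptual: a purely \emph{global} computation of discrepancies detects the \emph{local} obstructedness of the elliptic $6$-fold point, bypassing any explicit description of its versal deformation space.
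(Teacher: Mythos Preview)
Your overall strategy is exactly the paper's: reduce to $\S_{1,7}(6)$, study the regular contraction to $\S_{1,7}(6)$, and derive a contradiction from Lemma~\ref{L:Discrep} via a discrepancy computation. However, there is a genuine error in the discrepancy step.

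You assert that $\phi^{*}K_{\S_{1,7}(6)}\equiv c\lambda$ ``carries no boundary component,'' which amounts to claiming $\phi^{*}\lambda=\lambda$ in $\Pic_{\Q}(\S_{1,7}(5))$. This is false. The Hodge class $\lambda$ on $\S_{1,7}(6)$ is defined via the universal $6$-stable curve, and its pullback along $\phi$ is \emph{not} the Hodge class of the universal $5$-stable curve; these two families differ over the exceptional locus. Concretely, from $\Delta_{irr}=12\lambda$ on both spaces together with Proposition~\ref{P:PushPull} (applied to $\phi_{5}$ and $\phi_{6}$ and the factorization $\phi_{6}=\phi\circ\phi_{5}$) one finds
\[
\phi^{*}\lambda=\lambda+\sum_{|S|=2}\Delta_{0,S}\ \in\ \Pic_{\Q}(\S_{1,7}(5)).
\]
Thus $\phi^{*}K_{\S_{1,7}(6)}=-4\lambda-4\sum_{|S|=2}\Delta_{0,S}$, and since you correctly compute $K_{\S_{1,7}(5)}=-4\lambda$, the discrepancies are $a_{S}=4$, not $0$. (This is precisely the number the paper obtains, working on the coarse spaces.) Fortunately $4<6=\dim\S_{1,7}(6)-1$, so the contradiction with Lemma~\ref{L:Discrep} survives and your argument is easily repaired; but as written the step ``$a_{S}=0$'' is unjustified.

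A minor additional point: the paper runs the argument on the coarse moduli spaces, noting that the points of $\E_{6}$ have trivial stabilizer so that $\M_{1,7}(6)$ is genuinely smooth there under the hypothesis. Your choice to work directly on the stacks is fine, but it is worth making explicit that the inputs to Lemma~\ref{L:Discrep} (normality, properness, smoothness of the image points) are all available at the stack level.
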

\begin{proof}
It suffices to prove that $\S_{1,7}(6)$ is not smooth. Suppose, to the contrary, that $\S_{1,7}(6)$ were smooth. Then the coarse moduli space $\M_{1,7}(6)$ would be a normal projective variety. Furthermore, since the finitely many points of $\S_{1,7}(6)$ corresponding to curves with elliptic 6-fold points have no stabilizer,  $\M_{1,7}(6)$ would be smooth at these finitely many points. By Corollary \ref{C:RegularLocus}, the birational map
$$
\phi: \M_{1,7}(5) \rightarrow \M_{1,7}(6)
$$
is regular, with exceptional divisors $\{\Delta_{0,S}: S \subset [7], |S|=2\}$. Furthermore, if $\phi_{m}: \M_{1,n} \dashrightarrow \M_{1,n}(m)$ denotes the natural birational contraction, Lemma \ref{L:CanonicalDivisor} and Proposition \ref{P:PushPull} give
\begin{align*}
K_{\M_{1,7}(5)}&=(\phi_5)_*K_{\M_{1,7}}=\frac{\!\!-4}{12}\Delta_{irr},\\
K_{\M_{1,7}(6)}&=(\phi_6)_*K_{\M_{1,7}}=\frac{\!\!-4}{12}\Delta_{irr}.\\
\end{align*}
Using Proposition \ref{P:PushPull}, we obtain
$$K_{\S_{1,7}(5)}-\phi^*K_{\S_{1,7}(6)}=\frac{\!\!-4}{12}\Delta_{irr}-\phi^*\left(\frac{\!\!-4}{12}\Delta_{irr}\right)=4\sum_{|S|=2}\Delta_{0,S}.
$$
Since $4<6=\dim \S_{1,7}(6)-1$, this contradicts Lemma \ref{L:Discrep}. We conclude that $\S_{1,7}(6)$ must be singular.
\end{proof}

\bibliography{Bib}
\bibliographystyle{alpha}

\end{document}